\newtheorem{theorem}{Theorem}[section]
\newtheorem{proposition}[theorem]{Proposition}
\newtheorem{lemma}[theorem]{Lemma}
\newtheorem{corollary}[theorem]{Corollary}
\newtheorem{case}{Case}
\newtheorem{mtheorem}{Main Theorem}
\newtheorem{proof}{\textmd{\textit{Proof.}}}
\newtheorem{remark}[theorem]{Remark}
\newtheorem{definition}[theorem]{Definition}
\newtheorem{step}{Step}
\newcommand{\qedd}{\hfill \Box}
\newcommand{\ve}{\varepsilon}
\newcommand{\del}{\partial}
\newcommand{\lra}{\longrightarrow}
\newcommand{\e}{\mathrm{e}}
\newcommand{\g}{\mathrm{g}}
\newcommand{\N}{\ensuremath{\mathbb{N}}}
\newcommand{\R}{\ensuremath{\mathbb{R}}}
\newcommand{\Sph}{\ensuremath{\mathbb{S}}}
\newcommand{\cC}{\ensuremath{\mathcal{C}}}
\newcommand{\cI}{\ensuremath{\mathcal{I}}}
\newcommand{\cP}{\ensuremath{\mathcal{P}}}
\newcommand{\fm}{\ensuremath{\mathfrak{m}}}
\newcommand{\sP}{\ensuremath{\mathsf{P}}}
\def\vol{\mathop{\mathrm{vol}}\nolimits}
\def\diam{\mathop{\mathrm{diam}}\nolimits}
\def\supp{\mathop{\mathrm{supp}}\nolimits}
\def\Ent{\mathop{\mathrm{Ent}}\nolimits}
\def\Hess{\mathop{\mathrm{Hess}}\nolimits}
\def\Var{\mathop{\mathrm{Var}}\nolimits}
\def\Ric{\mathop{\mathrm{Ric}}\nolimits}
\def\CD{\mathop{\mathrm{CD}}\nolimits}
\def\RCD{\mathop{\mathrm{RCD}}\nolimits}
\title{Quantitative estimates for the Bakry--Ledoux isoperimetric inequality}
\author{Cong Hung MAI\thanks{
Department of Mathematics, Kyoto University, Kyoto 606-8502, Japan
({\sf hongmai@math.kyoto-u.ac.jp})}
\and
Shin-ichi OHTA\thanks{
Department of Mathematics, Osaka University, Osaka 560-0043, Japan
({\sf s.ohta@math.sci.osaka-u.ac.jp})} \textsuperscript{,}\thanks{
RIKEN Center for Advanced Intelligence Project (AIP),
1-4-1 Nihonbashi, Tokyo 103-0027, Japan}}
\date{\today}
\begin{document}

\maketitle

%%%%%%%%%%%%%%%%%%%%%%%%%%%%%%%%%%%%
\begin{abstract}
We establish a quantitative isoperimetric inequality
for weighted Riemannian manifolds with $\Ric_{\infty} \ge 1$.
Precisely, we give an upper bound of the volume of the symmetric difference between
a Borel set and a sub-level (or super-level) set of the associated guiding function
(arising from the needle decomposition),
in terms of the deficit in Bakry--Ledoux's Gaussian isoperimetric inequality.
This is the first quantitative isoperimetric inequality on noncompact spaces
besides Euclidean and Gaussian spaces.
Our argument makes use of Klartag's needle decomposition (also called localization),
and is inspired by a recent work of Cavalletti, Maggi and Mondino on compact spaces.
Besides the quantitative isoperimetry,
a reverse Poincar\'e inequality for the guiding function that we have as a key step,
as well as the way we use it, are of independent interest.
\end{abstract}

%\tableofcontents

\section{Introduction}%%%%%%%%%%%%%%%%%%%%%%
%%%%%%%%%%%%%%%%

Geometric and functional inequalities under various curvature bounds
are one of the main subjects of comparison geometry and geometric analysis.
Beyond an inequality itself, its \emph{rigidity}
(characterizing a space attaining equality, that we call a model space) and
\emph{stability} (showing that the space is close to the model space when equality nearly holds)
are important subjects, for instance in connection with the theory of convergence of spaces.
One of the classical stability results is Colding's `almost sphere theorem' \cite{Co1,Co2};
see Remark~\ref{rm:quan} for some more (classical and recent) results.
There are at least two strategies for stability problems.
One is based on compactness arguments:
we take a sequence of spaces asymptotically satisfying equality in the inequality in question,
and apply a rigidity result to its limit space.
This method usually provides implicit estimates.
Another strategy is an explicit quantitative estimate that we follow in this article.

Quantitative isoperimetric inequalities were intensively studied
in the Euclidean spaces (\cite{FiMP,FuMP}) and Gaussian spaces (\cite{BBJ,CFMP,El,MN}).
For the Gaussian space $(\R^n,\bm{\gamma}^n)$, $\bm{\gamma}^n :=(2\pi)^{-n/2}\e^{-|x|^2/2}\,dx$,
it is known that isoperimetric minimizers are half-spaces.
Precisely, given $\theta \in (0,1)$,
a half-space $H_{w,a_{\theta}}:=\{ x \in \R^n \,|\, \langle x,w \rangle \le a_{\theta} \}$
with $w \in \Sph^{n-1}$ and $a_{\theta} \in \R$ satisfying $\bm{\gamma}^n(H_{w,a_{\theta}})=\theta$
attains the minimum perimeter $\cI_{(\R^n,\bm{\gamma}^n)}(\theta)$
among sets with volume $\theta$.
Note that the isoperimetric profile $\cI_{(\R^n,\bm{\gamma}^n)}$ is independent of $n$,
and we will denote $\bm{\gamma}^1$ by $\bm{\gamma}$.
In \cite{BBJ,El} it was shown that
\begin{equation}\label{eq:Gauss}
\min_{w \in \Sph^{n-1}} \bm{\gamma}^n(A \,\triangle\, H_{w,a_{\theta}})
 \le C(\theta) \sqrt{\sP(A)-\cI_{(\R,\bm{\gamma})}(\theta)}
\end{equation}
holds for $A \subset \R^n$ with $\bm{\gamma}^n(A)=\theta$,
where $A \,\triangle\, B$ is the symmetric difference of $A$ and $B$
and $\sP(A)$ denotes the perimeter of $A$ with respect to $\bm{\gamma}^n$.
We call $\delta:=\sP(A)-\cI_{(\R,\bm{\gamma})}(\theta)$
the \emph{deficit} in the isoperimetric inequality.
Note that the order $\sqrt{\delta}$ in \eqref{eq:Gauss} is independent of $n$,
and is known to be optimal.

In curved spaces (such as Riemannian manifolds) without any symmetry nor homogeneity,
much less is known for quantitative isoperimetric inequalities.
For instance, for the L\'evy--Gromov isoperimetric inequality whose model space is a sphere,
the rigidity was classical whereas there had not been any quantitative version until recently.
A breakthrough was made by Klartag \cite{Kl},
who established an alternative proof of isoperimetric inequalities not relying on
the deep regularity theory from geometric measure theory.
The method developed in \cite{Kl} is the \emph{needle decomposition}
(also called the \emph{localization}, see Subsection~\ref{ssc:needle}),
which has its roots in convex geometry and enables us to reduce an inequality
on a (high-dimensional) space into those on geodesics (called \emph{needles}).
Then one only needs to perform a $1$-dimensional analysis on geodesics,
which is much simpler especially for isoperimetric inequalities.
This technique turned out useful also in rigidity and stability problems.

In \cite{CM1}, Cavalletti and Mondino generalized the needle decomposition
to essentially non-branching metric measure spaces
satisfying the \emph{curvature-dimension condition}
$\CD(K,N)$ with $K \in \R$ and $N \in (1,\infty)$,
and established the L\'evy--Gromov isoperimetric inequality,
as well as its rigidity for $\RCD(K,N)$-spaces.
The curvature-dimension condition $\CD(K,N)$ is a synthetic notion
of the lower Ricci curvature bound,
equivalent to $\Ric_N \ge K$ for weighted Riemannian or Finsler manifolds,
and the \emph{Riemannian curvature-dimension condition} $\RCD(K,N)$
is its reinforced version coupled with the linearity of heat flow
(see Subsection~\ref{ssc:wRic}).
Then, with a deeper analysis via the needle decomposition,
Cavalletti, Maggi and Mondino \cite{CMM} investigated
the stability for $\CD(N-1,N)$-spaces $(X,d,\fm)$ with $N \in (1,\infty)$.
They showed that, for $A \subset X$ with $\fm(A)=\theta$,
\begin{equation}\label{eq:CMM}
\fm\big( A \,\triangle\, B_r(x) \big) \le C(N,\theta) \big( \sP(A)-\cI_N(\theta) \big)^{N/(N^2+2N-1)}
\end{equation}
holds for some $x \in X$, where $B_r(x)$ is the ball of center $x$ and radius $r$,
with the model isoperimetric profile $\cI_N$ and appropriate $r=r(N,\theta)$.
This means that $A$ is close to a ball in terms of $\fm$.
We refer to \cite{CES} for another quantitative study of isoperimetric inequalities
on closed Riemannian manifolds with a different method.

The aim of this article is to explore the possibility of applying the needle decomposition
to a quantitative isoperimetric inequality under $\Ric_{\infty} \ge K>0$.
In general, stability problems in terms of $\Ric_{\infty}$ are more challenging
than those of $\Ric_N$ with $N \in (1,\infty)$,
since Gromov's precompactness theorem (\cite[\S 5.A]{Gr}) does not apply.
Without loss of generality we assume $K=1$ in the sequel.
In this case, Bakry and Ledoux \cite{BL} showed an isoperimetric inequality
with the Gaussian space $(\R,\bm{\gamma})$ as the model space
(see \cite{AM,Oisop} for some generalizations).
One of the most important differences between $\Ric_N \ge N-1$ (or $\CD(N-1,N)$)
and $\Ric_{\infty} \ge 1$ from our viewpoint is that $\Ric_N \ge N-1$ implies the compactness
(precisely, the diameter is bounded above by $\pi$ by the Bonnet--Myers theorem),
while $\Ric_{\infty} \ge 1$ can hold for noncompact spaces.
In fact, the model space for $\Ric_N \ge N-1$ is a sphere
and some stability estimates in terms of the diameter were essentially used in \cite{CMM}.
In the case of $\Ric_{\infty} \ge 1$, the possible unboundedness of needles causes several difficulties.
We perform careful estimates on needles to overcome these difficulties
(see for example Section~\ref{sc:psi}), and our main theorem asserts the following.

\begin{mtheorem}[Theorem~\ref{th:main}]
Let $(M,g,\fm)$ be a complete weighted Riemannian manifold
such that $\Ric_{\infty} \ge 1$ and $\fm(M)=1$.
Fix $\theta \in (0,1) \setminus \{1/2\}$ and $\ve \in (0,1)$,
take a Borel set $A \subset M$ with $\fm(A)=\theta$,
and assume that $\sP(A) \le \cI_{(\R,\bm{\gamma})}(\theta)+\delta$ holds
for sufficiently small $\delta>0$ $($relative to $\theta$ and $\ve)$.
Then, for the guiding function $u$ associated with $A$ such that $\int_M u \,d\fm=0$, we have
\[ {\min}\Big\{ \fm\big( A \,\triangle\, \{ u \le a_{\theta} \} \big),
 \fm\big( A \,\triangle\, \{ u \ge a_{1-\theta} \} \big) \Big\}
 \le C(\theta,\ve) \delta^{(1-\ve)/(9-3\ve)}. \]
\end{mtheorem}

Here the \emph{guiding function} $u$ stems from the construction of the needle decomposition
(see Subsection~\ref{ssc:needle}).
In the rigidity case, an isoperimetric minimizer is in fact
given as a sub-level set of the associated guiding function (see Theorem~\ref{th:Morgan}).
Furthermore, the guiding function is somehow related to the Busemann function,
hence its sub-level set can be viewed as `a half-space' or `a ball with center at infinity'
(see Remark~\ref{rm:main}\eqref{main-u} for a further account).
Therefore our main theorem is regarded as a counterpart to
\eqref{eq:Gauss} as well as \eqref{eq:CMM}.
We refer to Remark~\ref{rm:main} for further discussions and related open problems.
Here we only remark that the case of $\theta=1/2$ is removed merely for technical reasons
(Remark~\ref{rm:main}\eqref{main-1/2}),
and the main theorem holds true also for reversible Finsler manifolds
(Remark~\ref{rm:main}\eqref{main-F}).

Our careful calculation on needles provides further applications.
We in particular show that the guiding function $u$ in the theorem
enjoys the \emph{reverse Poincar\'e inequality}
\[ \Var_{(M,\fm)}(u) \ge \frac{1}{\Lambda'(\theta,\ve,\delta)} \int_M |\nabla u|^2 \,d\fm \]
such that $\Lambda'(\theta,\ve,\delta) \le (1-C(\theta,\ve)\delta^{(1-\ve)/(3-\ve)})^{-1}$
(Theorem~\ref{th:revP}).
This is indeed a reverse form of the Poincar\'e inequality
\[ \Var_{(M,\fm)}(u) \le \int_M |\nabla u|^2 \,d\fm \]
induced from $\Ric_{\infty} \ge 1$ (see \eqref{eq:Poin}).
The use of the reverse Poincar\'e inequality is inspired by \cite{Ma2} where we studied the rigidity problem,
and reveals an interesting relation between the isoperimetric inequality and the spectral gap
via the guiding function.
The reverse Poincar\'e inequality plays an essential role
to integrate $1$-dimensional estimates on needles into an estimate on $M$
in the proof of the main theorem (precisely, Proposition~\ref{pr:var_X} is a key ingredient).
We refer to Remark~\ref{rm:quan}(b) for related stability and rigidity results
for functional inequalities, and to \cite{DMS,Ha} for more recent results.

The article is organized as follows.
In Section~\ref{sc:prel} we review necessary notions related to the weighted Ricci curvature,
isoperimetric inequalities, and the needle decomposition.
Then Sections~\ref{sc:psi}--\ref{sc:revP} are devoted to the $1$-dimensional analysis.
We first establish in Section~\ref{sc:psi} that a small deficit in the isoperimetric inequality
implies that the measure on the needle is close to the Gaussian one (Proposition~\ref{pr:psi}).
This is the starting point of all the estimates in the sequel.
In Section~\ref{sc:symm} we show that a small deficit in the isoperimetric inequality
implies a small symmetric difference from a half-space (Proposition~\ref{pr:symm}).
In Section~\ref{sc:revP} we establish a reverse Poincar\'e inequality on needles
(Proposition~\ref{pr:revP}).
Coming back to Riemannian manifolds,
in Section~\ref{sc:rev} we derive a reverse Poincar\'e inequality for a guiding function
(Theorem~\ref{th:revP}) from the reverse Poincar\'e inequality on needles in the previous section.
Finally, we prove Main Theorem (Theorem~\ref{th:main}) in Section~\ref{sc:main}.

\section{Preliminaries}\label{sc:prel}%%%%%%%%%%%%%%%%%%%%%%
%%%%%%%%%%%%%%%%

Throughout the article, let $(M,g)$ be a connected, complete $\cC^{\infty}$-Riemannian manifold
of dimension $n \ge 2$ without boundary.
We denote by $d$ the Riemannian distance function.
A \emph{weighted Riemannian manifold} means a triple $(M,g,\fm)$,
where $\fm = \e^{-\Psi}\vol_g$ is a measure modifying the Riemannian volume measure
$\vol_g$ of $(M,g)$ with a \emph{weight function} $\Psi \in \cC^{\infty}(M)$.

\subsection{Weighted Ricci curvature and spectral gap}\label{ssc:wRic}%%%%%%%%%%%
%%%%%%%%%%%

On $(M,g,\fm)$, we need to modify the Ricci curvature $\Ric_g$ with respect to $g$
taking into account the behavior of $\fm$ (namely $\Psi$).

\begin{definition}[Weighted Ricci curvature]\label{df:wRic}
Given $v \in T_xM$ and $N \in \R \setminus \{n\}$,
define the \emph{weighted Ricci curvature} $\Ric_N(v)$
(also called the \emph{Bakry--\'{E}mery--Ricci curvature}) by
\[ \Ric_{N}(v) :=\Ric_g(v) +\Hess \Psi(v,v) -\frac{\langle \nabla \Psi(x),v\rangle^2}{N-n}. \]
As the limits of $N \to \infty$ and $N \downarrow n$, we also define
\begin{align*}
\Ric_{\infty}(v) &:=\Ric_g(v) +\Hess \Psi(v,v), \\
\Ric_{n}(v) &:=\begin{cases} \Ric_g(v) +\Hess \Psi(v,v)
 & \text{if}\ \langle \nabla\Psi(x),v\rangle = 0,\\
 -\infty & \text{otherwise}.
 \end{cases}
\end{align*}
\end{definition}

Note that $\Ric_{N}(cv) :=c^{2}\Ric_{N}(v)$ for all $c \in \R$.
If $\Psi$ is constant, then $\Ric_N(v)$ coincides with $\Ric_g(v)$ for all $N$.
We will write $\Ric_{N}\ge K$ for $K \in \R$ when $\Ric_{N}(v) \ge K|v|^2$ holds for all $v \in TM$.
Several remarks on $\Ric_N$ are in order.

\begin{remark}\label{rm:wRic}
\begin{enumerate}[(a)]
\item
By definition $\Ric_N$ enjoys the monotonicity
\[ \Ric_n(v) \le \Ric_N(v) \le \Ric_{\infty}(v) \le \Ric_{N'}(v) \]
for $N \in [n,\infty)$ and $N' \in (-\infty,n)$.
Therefore, for example, $\Ric_{\infty} \ge K$ is a weaker condition than
$\Ric_N \ge K$ with $N \in [n,\infty)$.

\item
The case of $N \in [n,\infty]$ has been intensively investigated by Bakry and his collaborators
in the context of \emph{$\Gamma$-calculus} (see \cite{BGL}),
including the isoperimetric inequality under $\Ric_{\infty} \ge K>0$ by Bakry--Ledoux \cite{BL}.
The study of the case of $N \in (-\infty,n)$ is rather recent,
we refer to \cite{GZ,KM,Ma1,Ma2,Mi2,Oneg,Wy} among others.

\item
The lower curvature bound $\Ric_{N} \ge K$ is known to be equivalent to
the \emph{curvature-dimension condition} $\CD(K,N)$ in the sense of Lott--Sturm--Villani,
see \cite{CMS,LV,vRS, StI,StII,Vi} (as well as \cite{Oint} for a Finsler analogue).
Metric measure spaces satisfying $\CD(K,N)$ (\emph{$\CD(K,N)$-spaces} for short)
share many analytic and geometric properties
with weighted Riemannian or Finsler manifolds of $\Ric_N \ge K$.
Moreover, requiring an additional condition on the linearity of heat flow,
one can introduce a reinforced version called the \emph{Riemannian curvature-dimension condition}
$\RCD(K,N)$ (see \cite{AGS,EKS}).
This excludes Finsler manifolds and we can show, for instance,
a Cheeger--Gromoll-type splitting theorem \cite{Gi-split,Gi-ov}.
\end{enumerate}
\end{remark}

We will also make use of the Laplacian associated with $\fm$.

\begin{definition}[Weighted Laplacian]\label{df:Lap}
The \emph{weighted Laplacian} (also called the \emph{Witten Laplacian})
acting on $u \in \cC^\infty(M)$ is defined by
\[ \Delta_{\fm} u := \Delta u - \langle \nabla u,\nabla\Psi \rangle, \]
where $\Delta$ is the canonical Laplacian with respect to $g$.
\end{definition}

The integration by parts formula for $\vol_g$ readily implies that for $\fm$, namely
\[ \int _{M} \phi \Delta_{\fm}u \,d\fm =-\int _{M}\langle \nabla \phi,\nabla u \rangle \,d\fm \]
holds for $\phi \in \cC^{\infty}(M)$ with compact support.

If $\Ric_{\infty} \ge K > 0$, then $\fm$ has a Gaussian decay and $\fm(M)<\infty$ holds
(\cite[Theorem~4.26]{StI}).
Since adding a constant to $\Psi$ does not change $\Ric_{\infty}$,
we can normalize $\fm$ as $\fm(M)=1$ without loss of generality.
From $\Ric_{\infty} \ge K>0$ we also have a lower bound of the first nonzero eigenvalue $\lambda_1$
of $-\Delta_{\fm}$ as $\lambda_{1} \ge K$.
This is a generalization of the classical Lichnerowicz inequality to the $\Ric_{\infty}$ context,
and equivalent to the \emph{Poincar\'e inequality}
\begin{equation}\label{eq:Poin}
\int_M u^2 \,d\fm -\bigg( \int_M u \,d\fm \bigg)^2
 \le \frac{1}{K} \int_M |\nabla u|^2 \,d\fm.
\end{equation}
The LHS of \eqref{eq:Poin} is the \emph{variance} of $u$
and will be denoted by $\Var_{(M,\fm)}(u)$.
The equality case was studied in \cite[Theorem 2]{CZ} as follows,
as a counterpart to the classical Obata theorem in \cite{Ob}.
 
\begin{theorem}[Rigidity of spectral gap]\label{th:CZ}
Let $(M,g,\fm)$ be a complete weighted Riemannian manifold satisfying $\fm(M)=1$
and $\Ric_{\infty} \ge K$ for some $K>0$.
If equality $\lambda_{1} = K$ is achieved with an eigenfunction $u$,
then we have the following.
\begin{enumerate}[{\rm (i)}]
\item
$(M,g,\fm)$ is isometric to the product space $\R \times \Sigma$ as weighted Riemannian manifolds,
where $\Sigma=u^{-1}(0)$ and $(\Sigma,g_{\Sigma},\fm_{\Sigma})$ is
an $(n-1)$-dimensional weighted Riemannian manifold of $\Ric_{\infty} \ge K$,
and $\R$ is equipped with the Gaussian measure $\sqrt{K/(2\pi)}\e^{-Kx^2/2} \,dx$.

\item
The function $u$ is constant on $\{t\} \times \Sigma$ for each $t \in \R$,
and we can moreover choose as $u(t,x)=t$.
\end{enumerate}
\end{theorem}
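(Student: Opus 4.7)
The plan is to derive everything from the integrated Bochner identity applied to the eigenfunction $u$. Writing the weighted Bochner formula
\begin{equation*}
\frac{1}{2}\Delta_{\fm}|\nabla u|^2 = |\Hess u|^2 + \langle \nabla \Delta_{\fm}u, \nabla u\rangle + \Ric_{\infty}(\nabla u, \nabla u),
\end{equation*}
substituting $\Delta_{\fm}u = -Ku$, and integrating against $\fm$ (justified by the Gaussian decay of $\fm$ and elliptic regularity of $u$), I obtain
\begin{equation*}
0 = \int_M |\Hess u|^2 \,d\fm + \int_M \bigl(\Ric_{\infty}(\nabla u,\nabla u) - K|\nabla u|^2\bigr) d\fm.
\end{equation*}
Both integrands are nonnegative (the second by the hypothesis $\Ric_{\infty}\ge K$), so both must vanish identically. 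In particular $\Hess u \equiv 0$, so $\nabla u$ is a globally defined parallel vector field, and $|\nabla u|$ is a positive constant (nonzero because $u$ is non-constant) that I rescale to $1$.

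Next, I use the parallel unit vector field $\nabla u$ to split $M$. Its flow generates an isometric $\R$-action on the complete manifold $M$, and the level set $\Sigma := u^{-1}(0)$ is a complete totally geodesic hypersurface (since $\Hess u$ vanishes). A de Rham type argument combined with completeness and the global definition of $u$ yields an isometry $M \cong \R \times \Sigma$ under which $\nabla u = \partial_t$ and $u(t,x) = t$; in particular $u$ is surjective onto $\R$ and constant on each slice $\{t\}\times \Sigma$. This already gives the content of (ii) once (i) is established.

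To finish (i) I must identify $\Psi$. Since $\Hess u \equiv 0$ the unweighted Laplacian $\Delta u$ vanishes, so the eigenvalue equation reduces to $\langle \nabla u, \nabla \Psi\rangle = Ku$, i.e.\ $\partial_t \Psi = Kt$ in the product coordinates. Integrating yields $\Psi(t,x) = \frac{K}{2}t^2 + \Psi_{\Sigma}(x)$ for some $\Psi_{\Sigma}\in\cC^{\infty}(\Sigma)$, so $\fm$ factors as the product of the Gaussian measure $\sqrt{K/(2\pi)}\,\e^{-Kt^2/2}\,dt$ on $\R$ with $\fm_{\Sigma}:=\e^{-\Psi_{\Sigma}}\vol_{g_{\Sigma}}$ (the constant is fixed by $\fm(M)=1$, which forces $\fm_{\Sigma}(\Sigma)=1$ after absorbing a constant into $\Psi_{\Sigma}$). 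On the product the weighted Ricci tensor decomposes as $\Ric_{\infty}^M(v) = \Ric_{\infty}^{\Sigma}(v^{\Sigma}) + K(v^t)^2$ for $v = v^t\partial_t + v^{\Sigma}$, and the hypothesis $\Ric_{\infty}^M \ge K$ then forces $\Ric_{\infty}^{\Sigma} \ge K$ on $\Sigma$, completing the proof.

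The main obstacle is the passage from the pointwise data $\Hess u \equiv 0$ and $\Ric_{\infty}(\nabla u,\nabla u) = K|\nabla u|^2$ to a \emph{global} isometric splitting of the weighted manifold: one has to verify that the orbits of the flow of $\nabla u$ do not recur, that $\Sigma$ is connected and complete, and that the decomposition of $\Psi$ is compatible with the ambient structure. The global surjectivity $u(M)=\R$ (which follows from $|\nabla u|\equiv 1$ and the completeness of the flow of a parallel vector field) and the Gaussian decay of $\fm$ at infinity (needed to justify the integration by parts in Bochner on a noncompact manifold) are the technical points that require the most care.
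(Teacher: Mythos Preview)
The paper does not actually prove this theorem: it is stated as background material with a citation to \cite[Theorem~2]{CZ} (Cheng--Zhou), so there is no ``paper's own proof'' to compare against. Your Bochner-formula argument is the standard route to this rigidity result and is essentially correct in outline; the key identity $\int_M |\Hess u|^2\,d\fm + \int_M (\Ric_\infty(\nabla u,\nabla u)-K|\nabla u|^2)\,d\fm = 0$ forces $\Hess u\equiv 0$, and the rest follows from the de~Rham splitting along the parallel field $\nabla u$ together with the computation of $\partial_t\Psi$.

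The one point that deserves more than a parenthetical is the integration by parts on a noncompact manifold. You invoke ``Gaussian decay of $\fm$ and elliptic regularity of $u$,'' but what is actually needed is that $u$, $|\nabla u|$, and $|\Hess u|$ lie in $L^2(\fm)$ and that the boundary terms in the integrated Bochner formula vanish. Under $\Ric_\infty\ge K>0$ the eigenfunction $u$ and $|\nabla u|$ are in $L^2(\fm)$ by the spectral theory of $\Delta_\fm$, and one typically handles the boundary terms via a cutoff argument using the self-adjointness of $\Delta_\fm$ on $L^2(\fm)$; this is where Cheng--Zhou (and earlier Bakry--\'Emery in the diffusion framework) do real work, and you should either cite it or spell out the cutoff estimate. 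The global splitting step is fine: completeness plus a unit parallel vector field gives a Riemannian product $\R\times\Sigma$ by the standard de~Rham argument, and $u(t,x)=t$ follows because $u$ has gradient $\partial_t$ and vanishes on $\Sigma$.
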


We remark that $u$ being an eigenfunction with eigenvalue $K$
implies equality in \eqref{eq:Poin} with $\int_M u \,d\fm=0$.
We refer to \cite{GKKO} for a generalization of Theorem~\ref{th:CZ} to $\RCD(K,\infty)$-spaces,
and to \cite{Ma1} for the case of $\Ric_N \ge K>0$ with $N<-1$
where we have a warped product splitting of hyperbolic nature instead of the isometric splitting.

\subsection{Isoperimetric inequalities}\label{ssc:isop}%%%%%%%%%%
%%%%%%%%%%%%%%%%

An important geometric result on weighted Riemannian manifolds with lower Ricci curvature bounds
is an isoperimetric inequality.
In order to state the isoperimetric inequality,
we define the \emph{perimeter} of a Borel set $A \subset M$ with $\fm(A)<\infty$ as
\begin{equation}\label{eq:peri}
\sP(A) :=\inf_{\{\phi_i\}_{i \in \N}}  \liminf_{i \to \infty} \int_M |\nabla \phi_i| \,d\fm,
\end{equation}
where $\{\phi_i\}_{i \in \N}$ runs over all sequences of Lipschitz functions
converging to the characteristic function $\chi_A$ of $A$ in $L^1(\fm)$.
When $\sP(A)<\infty$, we have $\sP(M \setminus A)=\sP(A)$.

One can also consider the \emph{Minkowski exterior content} (or \emph{boundary measure})
defined by
\[ \fm^{+}(A) := \liminf_{\ve \to 0} \frac{\fm(B(A,\ve) \setminus A)}{\ve} \]
for a Borel set $A$, where $B(A,\ve)$ denotes the open $\ve$-neighborhood of $A$.
By taking $\phi_i(x):=\max\{ 1-i \cdot d(A,x),0 \}$,
we see that $\fm^+(A) \ge \sP(A)$ in general.
If the boundary $\del A$ is sufficiently smooth, then $\fm^+(A) =\sP(A)$ holds
and they coincide with $(\e^{-\Psi} \mathcal{H}^{n-1})(\del A)$,
where $\mathcal{H}^{n-1}$ is the $(n-1)$-dimensional Hausdorff measure.
This is the case for isoperimetric minimizers by virtue of the regularity theory
in geometric measure theory (see \cite[\S 2.2]{Mi1} for instance).
We refer to \cite[Section~3.3]{AFP}, \cite[Section~14]{BZ} and \cite{ADG} for more on the perimeter.

Assuming $\fm(M) = 1$, we define the \emph{isoperimetric profile} as
 \[ \cI_{(M,\fm)}(\theta) := \inf \{ \sP(A) \,|\, A \subset M,\, \fm(A)=\theta \} \]
for $\theta \in (0,1)$, where $A$ runs over all Borel sets with $\fm(A)=\theta$.
An isoperimetric inequality under the condition $\Ric_{\infty} \ge K> 0$
was first shown by Bakry--Ledoux \cite{BL}, having the same form as that for the Gaussian spaces.
Milman \cite{Mi1,Mi2} then intensively studied the combination of $\Ric_N \ge K$
and a diameter bound $\diam(M) \le D$, and showed the following.

\begin{theorem}[Isoperimetric inequalities]\label{th:isop}
Let $(M,g,\fm)$ be a complete weighted Riemannian manifold satisfying $\fm(M)=1$,
$\diam(M) \le D$ with $D \in (0,\infty]$, and $\Ric_{N} \ge K$ for $N \in (-\infty,0) \cup [n,\infty]$
and $K \in \R$.
Then we have $\cI_{(M,\fm)}(\theta) \ge \cI_{(K,N,D)}(\theta)$ for all $\theta \in (0,1)$,
where $\cI_{(K,N,D)}$ is an explicitly given function depending only on $K,N$ and $D$.
\end{theorem}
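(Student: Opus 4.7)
The plan is to use the needle decomposition (Klartag's localization, reviewed in Subsection~\ref{ssc:needle}) to reduce the isoperimetric inequality on the high-dimensional space $(M,g,\fm)$ to a family of one-dimensional isoperimetric inequalities on geodesic needles, and then integrate. This is the strategy of Klartag \cite{Kl}, extended to the synthetic curvature-dimension setting by Cavalletti--Mondino \cite{CM1} and pushed to the ranges $N \in (-\infty,0) \cup [n,\infty]$ by Milman \cite{Mi1,Mi2}.

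Given a Borel set $A \subset M$ with $\fm(A)=\theta$, I would apply the localization to the zero-mean function $f := \chi_A - \theta \in L^1(\fm)$. This produces a partition (up to an $\fm$-null set) of $M$ into geodesic needles $\{X_\alpha\}_{\alpha \in Q}$ with conditional probability measures $\fm_\alpha$ supported on the respective $X_\alpha$, and a quotient probability measure $\mu$ on $Q$ such that $\fm = \int_Q \fm_\alpha \,d\mu(\alpha)$. The essential features are that each one-dimensional weighted space $(X_\alpha,\fm_\alpha)$ inherits $\CD(K,N)$, that $\diam(X_\alpha) \le \diam(M) \le D$, and that $\int f \,d\fm_\alpha = 0$ for $\mu$-a.e.\ $\alpha$, which is equivalent to the mass balance $\fm_\alpha(A \cap X_\alpha) = \theta$ on each needle.

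The next step is a one-dimensional isoperimetric inequality on each needle: under $\CD(K,N)$ with the admissible values of $N$ and the diameter bound $D$, any subset of $\fm_\alpha$-measure $\theta$ in an interval has $1$-dimensional perimeter at least $\cI_{(K,N,D)}(\theta)$, where $\cI_{(K,N,D)}$ is the model profile determined by explicit one-dimensional model densities (Gaussian-type for $N=\infty$ with $K>0$, trigonometric/spherical-type for $N \in [n,\infty)$ with $K>0$, and hyperbolic-type densities otherwise). Once the slicewise bound is established, I would pass from it to the global one through the slicing inequality
\[ \sP(A) \ge \int_Q \sP_{\fm_\alpha}(A \cap X_\alpha) \,d\mu(\alpha), \]
which follows by taking a Lipschitz approximation $\phi_i \to \chi_A$ in $L^1(\fm)$, disintegrating $\int_M |\nabla\phi_i|\,d\fm$ along the needles, and applying Fatou's lemma together with the $L^1(\fm_\alpha)$-convergence of the restrictions $\phi_i|_{X_\alpha}$ to $\chi_{A \cap X_\alpha}$ for $\mu$-a.e.\ $\alpha$ along a subsequence. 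Integrating the slicewise bound against the probability measure $\mu$ yields $\sP(A) \ge \cI_{(K,N,D)}(\theta)$, and taking the infimum over admissible $A$ gives the theorem.

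The hard part is the one-dimensional analysis: both identifying the sharp model profile $\cI_{(K,N,D)}$ and proving the corresponding isoperimetric inequality on $\CD(K,N)$ densities with the diameter constraint. For $N=\infty$ with $K>0$ this amounts to the Bakry--Ledoux Gaussian comparison, provable via an Ehrhard-type symmetrization or the $\Gamma$-calculus on the associated one-dimensional diffusion; the case $N \in [n,\infty)$ with $K=N-1$ recovers the L\'evy--Gromov profile via a Borell--Brascamp--Lieb argument; and the range $N<0$ requires the convexity analysis of log-convex densities carried out in \cite{Mi2}. A secondary technical obstacle is justifying the slicing inequality for the weighted perimeter, which depends on the compatibility of the disintegration with Dirichlet-type energies along the needles; this is embedded in the localization package of \cite{Kl,CM1} and ultimately reflects the fact that the conditional densities are smooth and the geodesic rays foliate $M$ disjointly.
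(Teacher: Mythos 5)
Your proposal is correct and follows essentially the same route the paper takes (recalled in Subsection~\ref{ssc:needle}): apply the needle decomposition to $f=\chi_A-\theta$, note that each needle $(X_q,\fm_q)$ inherits $\CD(K,N)$, the diameter bound and the mass balance $\fm_q(A)=\theta$, invoke the one-dimensional isoperimetric inequality $\sP(A\cap X_q)\ge\cI_{(K,N,D)}(\theta)$, and integrate via the slicing inequality of Lemma~\ref{lm:Lem4.1}. The one-dimensional model profiles and the slicing lemma are, as in your sketch, taken from the literature (Milman and Cavalletti--Maggi--Mondino), so no gap remains.
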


The estimation is sharp in all the parameters $K,N$ and $D$,
and we stress that $\cI_{(K,N,D)}$ is independent of the dimension $n$ of $M$.
We refer to \cite{Mi1,Mi2} for precise formulas of the function $\cI_{(K,N,D)}$.
Here we present the only relevant case in this article, namely $K>0$ and $N=\infty$, for later use.
Without the diameter bound $(D=\infty)$, the model space is the Gaussian space and
\[ \cI_{(K,\infty,\infty) }(\theta) =\sqrt{\frac{K}{2\pi}} \e^{-Ka_{\theta}^2/2}, \qquad
 \text{where}\ \ \sqrt{\frac{K}{2\pi}} \int_{-\infty}^{a_{\theta}} \e^{-Kt^2/2} \,dt =\theta. \]
For $D \in (0,\infty)$, we have
\[ \cI_{(K,\infty,D)}(\theta) =\inf_{\xi \in [-D,0]} f_{\xi,D}(\theta) \]
with 
\[ f_{\xi,D}(\theta) :=\frac{\e^{-Kb_{\theta,\xi,D}^2/2}}{\int_{\xi}^{\xi+D} \e^{-Kt^2/2} \,dt}, \qquad
 \text{where}\ \ \frac{\int_{\xi}^{b_{\theta,\xi,D}} \e^{-Kt^2/2} \,dt}{\int_{\xi}^{\xi+D} \e^{-Kt^2/2} \,dt}
 =\theta. \]
Let us have a closer look on how the diameter influences the isoperimetric profile,
with the help of some calculations in \cite[Lemma~3.1]{Ma2}.

\begin{lemma}[Difference between $\cI_{(K,\infty,D)}$ and $\cI_{(K,\infty,\infty)}$]\label{lm:I_D}
Let $K,D \in (0,\infty)$.
For $\theta \in (0,1)$, we have
\[ \cI_{(K,\infty,D)}(\theta) -\cI_{(K,\infty,\infty)}(\theta)
 >\frac{\sqrt{K}}{\pi} \frac{\e^{-KD^2}}{\sqrt{K}D+1}. \]
\end{lemma}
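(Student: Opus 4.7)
The plan is to produce a lower bound on $f_{\xi,D}(\theta) - \cI_{(K,\infty,\infty)}(\theta)$ that is uniform in $\xi \in [-D, 0]$ and $\theta \in (0,1)$, and then take the infimum over $\xi$. I abbreviate $Z := \sqrt{2\pi/K}$, $\phi(t) := \e^{-Kt^2/2}$, $F(t) := \int_{-\infty}^t \phi(s)\,ds$, and $G(t) := \int_t^\infty \phi(s)\,ds = Z - F(t)$, so that $\cI_{(K,\infty,\infty)}(\theta) = \phi(a_\theta)/Z$ and $f_{\xi,D}(\theta) = \phi(b)/Z_{\xi,D}$, with $Z_{\xi,D} = F(\xi+D) - F(\xi)$ and $b = b_{\theta,\xi,D}$ determined by $F(b) - F(\xi) = \theta Z_{\xi,D}$. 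The key reparameterization is $T_- := -\xi \ge 0$, $T_+ := \xi + D \ge 0$, $T_- + T_+ = D$, which together with the symmetry $F(-T) = G(T)$ yields the ``missing mass'' identity
\[
Z - Z_{\xi,D} = G(T_-) + G(T_+).
\]

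The main device is the algebraic decomposition
\[
f_{\xi,D}(\theta) - \cI_{(K,\infty,\infty)}(\theta)
= \frac{\phi(a_\theta)\bigl(G(T_-) + G(T_+)\bigr)}{Z \cdot Z_{\xi,D}}
+ \frac{\phi(b) - \phi(a_\theta)}{Z_{\xi,D}},
\]
whose first summand is manifestly nonnegative. Two quantitative inputs feed in: (i) the Mills-ratio lower bound $G(T) > T\phi(T)/(KT^2+1)$ for $T > 0$, and (ii) the convexity of $G$ on $[0,\infty)$ (indeed $G'' = KT\phi > 0$), which gives $G(T_-) + G(T_+) \ge 2\,G(D/2)$; together with $T_\pm \le D$ one also has the convenient form $G(T_-) + G(T_+) \ge 2\,G(D)$. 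A short case analysis on the location of $a_\theta$ finishes the argument: if $|a_\theta| \le D$ then $\phi(a_\theta) \ge \phi(D) = \e^{-KD^2/2}$, so the first summand alone is at least of order $\phi(D)\cdot G(D)$, which is $\asymp \e^{-KD^2}$, and plugging in the Mills bound together with $Z^2 = 2\pi/K$ yields the explicit constant $\sqrt{K}/[\pi(\sqrt{K}D+1)]$; if $|a_\theta| > D$, then $b \in [\xi,\xi+D] \subset [-D,D]$ forces $\phi(b) \ge \phi(D) > \phi(a_\theta)$, so the second summand is also nonnegative and a direct comparison using $Z_{\xi,D} \le Z$ produces a bound of comparable size.

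The main obstacle is the transitional regime $|a_\theta| \approx D$, where $\phi(a_\theta) \approx \phi(D)$ and the second summand may be small or have ambiguous sign; there one must combine both summands and track the displacement $b - a_\theta$ via the explicit identity
\[
F(b) - F(a_\theta) = (1-\theta)\, G(T_-) - \theta\, G(T_+),
\]
which expresses it directly in terms of the tail masses $G(T_\pm)$. The exponent $\e^{-KD^2}$ in the conclusion (as opposed to the true asymptotic $\e^{-KD^2/2}$) is precisely what one naturally obtains as a \emph{product} of two Mills-type tail bounds: it sacrifices sharpness in $D$ in exchange for a clean explicit constant that is uniform in $\theta$, which is exactly the form required to feed this lemma into the subsequent quantitative estimates.
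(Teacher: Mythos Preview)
Your algebraic decomposition
\[
f_{\xi,D}(\theta)-\cI_{(K,\infty,\infty)}(\theta)
=\frac{\phi(a_\theta)\bigl(G(T_-)+G(T_+)\bigr)}{Z\,Z_{\xi,D}}
+\frac{\phi(b)-\phi(a_\theta)}{Z_{\xi,D}}
\]
is correct, but the case analysis does not close. In the branch $|a_\theta|\le D$ you bound only the first summand and assert that it alone produces the constant $\sqrt{K}\e^{-KD^2}/[\pi(\sqrt{K}D+1)]$; however the second summand can be strictly negative there. For a concrete instance take $\xi=-D$ (so $T_-=D$, $T_+=0$) and $\theta=1/2$: then $a_\theta=0$, while $b$ is the median of $\e^{-Kt^2/2}$ restricted to $[-D,0]$ and hence strictly negative, so $\phi(b)<\phi(a_\theta)$. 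Nothing in your outline bounds the resulting loss against the gain from the first term, and your ``transitional regime'' paragraph only records the identity $F(b)-F(a_\theta)=(1-\theta)G(T_-)-\theta G(T_+)$ without turning it into an estimate. So there is a genuine gap, and it is present throughout the first case, not only near $|a_\theta|\approx D$.

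The paper sidesteps this by minimizing in $\theta$ first. Setting $h(\theta):=f_{\xi,D}(\theta)-\cI_{(K,\infty,\infty)}(\theta)$ and computing $\cI'_{(K,\infty,\infty)}(\theta)=-Ka_\theta$, $f'_{\xi,D}(\theta)=-Kb$, one gets $h'(\theta)=K(a_\theta-b)$. At any interior critical point $\theta_0$ this forces $a_{\theta_0}=b_{\theta_0}$, and since $\cI_{(K,\infty,\infty)}$ vanishes at $\theta=0,1$ it suffices to bound $h$ at such $\theta_0$. But there your second summand is \emph{exactly zero}, and the first reduces to $\phi(a_{\theta_0})(1/Z_{\xi,D}-1/Z)$ with $|a_{\theta_0}|=|b_{\theta_0}|<D$, hence $\phi(a_{\theta_0})>\e^{-KD^2/2}$; the Mills-type tail bound (the paper uses $\int_x^\infty \e^{-s^2/2}\,ds\ge \e^{-x^2/2}/(x+1)$) then finishes. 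In other words, the key observation you are missing is that the minimum over $\theta$ lands precisely where your problematic term vanishes.
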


\begin{proof}
Let us abbreviate in this proof as $\cI_D=\cI_{(K,\infty,D)}$ and $\cI_{\infty}=\cI_{(K,\infty,\infty)}$.
We deduce from the definitions of $a_{\theta}$ and $b_{\theta,\xi,D}$ that
\begin{equation}\label{eq:a'}
\cI_{\infty}(\theta) \frac{da_{\theta}}{d\theta}
 =f_{\xi,D}(\theta) \frac{\del b_{\theta,\xi,D}}{\del\theta} =1.
\end{equation}
Then we find
\begin{align}
\cI'_{\infty}(\theta) &=-K a_{\theta} \cI_{\infty}(\theta) \frac{da_{\theta}}{d\theta}
 =-Ka_{\theta}, \label{eq:I'}\\
f'_{\xi,D}(\theta) &= -K b_{\theta,\xi,D} f_{\xi,D}(\theta) \frac{\del b_{\theta,\xi,D}}{\del\theta}
 =-Kb_{\theta,\xi,D}. \nonumber
\end{align}
Now, in order to estimate $f_{\xi,D}(\theta) -\cI_{\infty}(\theta)$, it suffices to test
at $\theta=0,1$ and $\theta_0$ at where $\cI'_{\infty}(\theta_0)=f'_{\xi,D}(\theta_0)$ holds.
At such $\theta_0$, it follows from the calculations above that $a_{\theta_0}=b_{\theta_0,\xi,D}$,
thereby
\[ f_{\xi,D}(\theta_0) -\cI_{\infty}(\theta_0)
 =\left( \frac{1}{\int_{\xi}^{\xi+D} \e^{-Kt^2/2} \,dt} -\sqrt{\frac{K}{2\pi}} \right)
 \e^{-Kb_{\theta_0,\xi,D}^2/2}. \]
Noticing $|b_{\theta_0,\xi,D}|<D$ as well as $\cI_{\infty}(0)=\cI_{\infty}(1)=0$, we obtain
\[ \cI_D(\theta) -\cI_{\infty}(\theta)
 >\left( \frac{1}{\int_{-D}^D \e^{-Kt^2/2} \,dt} -\sqrt{\frac{K}{2\pi}} \right) \e^{-KD^2/2} \]
for all $\theta \in (0,1)$.
Since
\[ \int_{-D}^D \e^{-Kt^2/2} \,dt
 =\frac{1}{\sqrt{K}} \int_{-\sqrt{K}D}^{\sqrt{K}D} \e^{-s^2/2} \,ds
 =\frac{1}{\sqrt{K}} \bigg( \sqrt{2\pi}-2\int_{\sqrt{K}D}^{\infty} \e^{-s^2/2} \,ds \bigg) \]
and
\[ \int_{\sqrt{K}D}^{\infty} \e^{-s^2/2} \,ds
 \ge \int_{\sqrt{K}D}^{\infty} \frac{s^2+s+1}{(s+1)^2} \e^{-s^2/2} \,ds
 =-\bigg[ \frac{\e^{-s^2/2}}{s+1} \bigg]_{\sqrt{K}D}^{\infty}
 =\frac{\e^{-KD^2/2}}{\sqrt{K}D+1}, \]
we find
\[ \frac{1}{\int_{-D}^D \e^{-Kt^2/2} \,dt} -\sqrt{\frac{K}{2\pi}}
 \ge \sqrt{\frac{K}{2\pi}} \bigg\{ 
 \bigg( 1-\sqrt{\frac{2}{\pi}} \frac{\e^{-KD^2/2}}{\sqrt{K}D+1} \bigg)^{-1} -1 \bigg\}
 \ge \frac{\sqrt{K}}{\pi} \frac{\e^{-KD^2/2}}{\sqrt{K}D+1}. \]
This completes the proof.
$\qedd$
\end{proof}

Note that the lower bound in Lemma~\ref{lm:I_D} is uniform in $\theta$.
From the calculation in the above proof, we also find a fundamental fact that
the profile $\cI_{(K,\infty,\infty)}$ is strictly concave.

\begin{lemma}[Concavity of $\cI_{(K,\infty,\infty)}$]\label{lm:conc}
For $\theta \in (0,1)$, we have
\[ \cI''_{(K,\infty,\infty)}(\theta) =-\frac{K}{\cI_{(K,\infty,\infty)}(\theta)}.  \]
\end{lemma}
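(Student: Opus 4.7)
The plan is to obtain $\cI''_{(K,\infty,\infty)}$ by a direct second differentiation of the explicit formula $\cI_{(K,\infty,\infty)}(\theta)=\sqrt{K/(2\pi)}\,\e^{-Ka_{\theta}^{2}/2}$, using the identities already derived in the proof of Lemma~\ref{lm:I_D}. The computation of $\cI'_{(K,\infty,\infty)}$ has in effect already been done there in \eqref{eq:I'}, so only one more differentiation step is needed.

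First I would recall from \eqref{eq:a'} the relation
\[
\cI_{(K,\infty,\infty)}(\theta)\,\frac{da_{\theta}}{d\theta}=1,
\]
which comes from differentiating the defining identity $\sqrt{K/(2\pi)}\int_{-\infty}^{a_{\theta}}\e^{-Kt^{2}/2}\,dt=\theta$ in $\theta$. Next I would recall \eqref{eq:I'}, that is
\[
\cI'_{(K,\infty,\infty)}(\theta)=-K a_{\theta}.
\]
Differentiating this once more in $\theta$ and applying the previous display gives immediately
\[
\cI''_{(K,\infty,\infty)}(\theta)=-K\,\frac{da_{\theta}}{d\theta}=-\frac{K}{\cI_{(K,\infty,\infty)}(\theta)},
\]
which is the claim.

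There is essentially no obstacle: all three displays are either restatements of formulas already established in the previous lemma or a one-line chain-rule computation. The only thing worth remarking is that $a_{\theta}$ is a smooth function of $\theta\in(0,1)$ because the density $\sqrt{K/(2\pi)}\,\e^{-Kt^{2}/2}$ is strictly positive, so $da_{\theta}/d\theta$ exists and the second derivative $\cI''_{(K,\infty,\infty)}$ is well-defined on $(0,1)$. In particular the formula shows $\cI''_{(K,\infty,\infty)}<0$, giving the strict concavity announced just before the statement.
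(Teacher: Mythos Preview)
Your proof is correct and is exactly the paper's approach: the paper's own proof simply says the claim is straightforward from \eqref{eq:I'} and \eqref{eq:a'}, which is precisely the differentiation you carry out. Your additional remark on the smoothness of $a_{\theta}$ is a harmless clarification.
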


\begin{proof}
This is staightforward from \eqref{eq:I'} and \eqref{eq:a'}.
$\qedd$
\end{proof}

Let us close the subsection with a rigidity result of Morgan \cite[Theorem~18.7]{Mo}
(see \cite[Section~3]{Ma2} for an alternative proof based on the needle decomposition).

\begin{theorem}[Rigidity of isoperimetric inequality]\label{th:Morgan}
Let $(M,g,\fm)$ be a complete weighted Riemannian manifold satisfying $\fm(M)=1$
and $\Ric_{\infty} \ge K$ for some $K>0$.
If $\fm^+(A) =\cI_{(K,\infty,\infty)}(\theta)$ holds for some $A \subset M$ with $\theta=\fm(A) \in (0,1)$,
then we have the following.
\begin{enumerate}[{\rm (i)}]
\item
$(M,g,\fm)$ is isometric to the product space $\R \times \Sigma$ as weighted Riemannian manifolds,
where $(\Sigma,g_{\Sigma},\fm_{\Sigma})$ is an $(n-1)$-dimensional weighted Riemannian manifold
of $\Ric_{\infty} \ge K$, and $\R$ is equipped with the Gaussian measure
$\sqrt{K/(2\pi)}\e^{-Kx^2/2} \,dx$.

\item
The set $A$ is a half-space in this product structure, in the sense that
$A$ coincides with $(-\infty,a_{\theta}] \times \Sigma$ or $[a_{1-\theta},\infty) \times \Sigma$.
\end{enumerate}
\end{theorem}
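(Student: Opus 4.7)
The plan is to follow the alternative approach noted in the excerpt, using Klartag's needle decomposition in the spirit of \cite{Ma2}. First, I apply the needle decomposition to the balanced function $f := \chi_A - \theta$, which satisfies $\int_M f \,d\fm = 0$. This produces a $1$-Lipschitz guiding function $u:M \to \R$ together with a disintegration $\fm = \int_Q \fm_\alpha \,dq(\alpha)$ along the maximal transport rays $X_\alpha$ of $u$ (the \emph{needles}). Because $\Ric_\infty \ge K$, each probability measure $\fm_\alpha$ on $X_\alpha$ inherits a $1$-dimensional $\CD(K,\infty)$ structure, and the balancing condition forces $\fm_\alpha(A) = \theta$ for $q$-almost every $\alpha$.

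Next, I transfer the isoperimetric equality down to the needles. A standard disintegration argument for the Minkowski content yields
\[
\fm^+(A) \;\ge\; \int_Q \fm_\alpha^+(A \cap X_\alpha)\, dq(\alpha),
\]
and applying the $1$-dimensional Bakry--Ledoux inequality on each needle gives $\fm_\alpha^+(A\cap X_\alpha) \ge \cI_{(K,\infty,\infty)}(\theta)$. The equality hypothesis $\fm^+(A) = \cI_{(K,\infty,\infty)}(\theta)$ therefore forces $\fm_\alpha^+(A \cap X_\alpha) = \cI_{(K,\infty,\infty)}(\theta)$ for $q$-almost every $\alpha$.

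The main obstacle is the ensuing $1$-dimensional rigidity: I must show that a $\CD(K,\infty)$ probability density on an interval which achieves equality in the Gaussian isoperimetric inequality at some $\theta \in (0,1)$ must be the Gaussian density $\sqrt{K/(2\pi)}\,\e^{-Kt^2/2}$ on all of $\R$, and that the minimizing set $A \cap X_\alpha$ must be a half-line. I plan to prove this by revisiting the $1$-D Bakry--Ledoux argument: equality at a single $\theta$ forces every intermediate convexity-type inequality along the semigroup interpolation to be sharp, pinning down the potential $-\log \fm_\alpha$ as $Kt^2/2$ up to translation and forcing the domain to be all of $\R$. The subset $A \cap X_\alpha$, being an isoperimetric minimizer of mass $\theta$ in the Gaussian line, is then a half-line $(-\infty, s_\alpha]$ or $[s_\alpha, \infty)$.

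Finally, I assemble the splitting. Once every needle is isometric to $\R$ with the Gaussian centered at the origin (achievable by normalizing $u$ so that $\int_M u\,d\fm = 0$), the guiding function satisfies $|\nabla u| \equiv 1$ on $M$; hence $u$ is smooth without critical points and its gradient flow $\Phi_t$ is complete. The map $(t,x) \mapsto \Phi_t(x)$ then furnishes a Riemannian isometry $\R \times \Sigma \to M$ where $\Sigma := u^{-1}(0)$, because $\nabla u$-flow lines are unit-speed geodesics orthogonal to the level sets of $u$ (Gauss' lemma). Disintegrating $\fm$ identifies it as the product of the Gaussian $\sqrt{K/(2\pi)}\,\e^{-Kt^2/2}\,dt$ with a weighted structure $\fm_\Sigma$ on $\Sigma$, and $\Ric_\infty \ge K$ descends to $(\Sigma, g_\Sigma, \fm_\Sigma)$ by tensorization of the Bakry--\'Emery curvature along the Riemannian product. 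For part (ii), the orientations of the half-lines $A \cap X_\alpha$ must be globally consistent across needles; combined with $\fm_\alpha(A) = \theta$ this pins down $s_\alpha$ uniformly to $a_\theta$ or to $a_{1-\theta}$, yielding $A = (-\infty, a_\theta] \times \Sigma$ or $A = [a_{1-\theta}, \infty) \times \Sigma$.
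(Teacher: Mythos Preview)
The paper does not itself prove Theorem~\ref{th:Morgan}; it is quoted from Morgan~\cite[Theorem~18.7]{Mo}, with the remark that \cite[Section~3]{Ma2} gives an alternative proof via needle decomposition. The paragraph following the statement indicates the structure of that alternative: once the $1$-dimensional rigidity forces each needle to be the Gaussian line, the guiding function $u$ achieves the sharp spectral gap $\lambda_1 = K$, and one then invokes Theorem~\ref{th:CZ} (Cheng--Zhou) to obtain the isometric splitting and the product description of $\fm$.

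Your proposal follows the same opening (needle decomposition, transfer of equality to needles, $1$-D rigidity) but diverges at the assembly step: instead of passing through the spectral gap and Theorem~\ref{th:CZ}, you assert that $|\nabla u|\equiv 1$ makes $u$ smooth without critical points and then build the splitting by flowing along $\nabla u$. This is the gap. The guiding function is a priori only $1$-Lipschitz, and what the needle decomposition delivers is $|\nabla u|=1$ $\fm$-almost everywhere on the transport set; that does not imply $C^1$-regularity of $u$, so the gradient flow $\Phi_t$ and the Gauss-lemma argument are not available as stated. The route through Theorem~\ref{th:CZ} avoids this entirely: once $u$ is identified as an eigenfunction of $-\Delta_{\fm}$ with eigenvalue $K$ (which is what the $1$-D rigidity on each needle, combined with the variance computation, actually yields), elliptic regularity makes $u$ smooth, and the splitting is produced by the Bochner-formula argument encoded in Theorem~\ref{th:CZ}. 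Two further points: your $1$-D rigidity sketch (``semigroup interpolation'') is not how one argues on a single interval with a $K$-convex potential---the direct proof uses the convexity inequality for $\psi$ together with the normalization $\int \e^{-\psi}=1$, as in the estimates of Section~\ref{sc:psi} taken at $\delta=0$; and the assertion that the half-line orientations are ``globally consistent'' needs justification, which in the \cite{Ma2} approach falls out automatically once the product structure from Theorem~\ref{th:CZ} is in hand.
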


Our main theorem (Theorem~\ref{th:main}) will be a quantitative version of this theorem.
Notice that the first assertion on the splitting phenomenon is same as Theorem~\ref{th:CZ}.
In fact, in \cite{Ma2}, we saw that the guiding function $u$ associated with the set $A$
(see the next subsection) turns out providing the sharp spectral gap $\lambda_1=K$,
and the isoperimetric minimizer $A$ is in fact a sub-level (or super-level) set of $u$.
These facts motivate reverse Poincar\'e inequalities below
(Proposition~\ref{pr:revP}, Theorem~\ref{th:revP})
as well as the formulation of Theorem~\ref{th:main}.

\subsection{Needle decompositions}\label{ssc:needle}%%%%%%%%%
%%%%%%%%%%%

Now we recall the main ingredient of our argument,
the \emph{needle decomposition} (also called the \emph{localization}),
established on weighted Riemannian manifolds by the seminal work of Klartag \cite{Kl}.
The needle decomposition has its roots in convex geometry,
going back to \cite{PW} and developed in \cite{GM,KLS,LS}.
Roughly speaking, via the needle decomposition
one can reduce an inequality on a high-dimensional space to those on geodesics
(\emph{needles}) in that space.
Then, especially in isoperimetric inequalities, the $1$-dimensional analysis on geodesics
could be simpler than the direct analysis on the original space.

We first define transport rays associated with a $1$-Lipschitz function.
We say that a function $u:M \lra \R$ is \emph{$1$-Lipschitz}
if $|u(x)-u(y)| \le d(x,y)$ holds for all $x,y \in M$.

\begin{definition}[Transport rays]\label{df:T-ray}
Let $u$ be a 1-Lipschitz function on $M$.
We say that $X \subset M$ is a \emph{transport ray} associated with $u$
if $|u(x)-u(y)| = d(x,y)$ holds for all $x,y \in X$ and if, for all $z \not\in X$,
there exists $x \in X$ such that $|u(x) - u(z)|<d(x,z)$.
\end{definition}

Any transport ray is a closed set and necessarily the image of a minimal geodesic,
thereby equipped with the natural distance structure and identified with a closed interval.
We shall make use of the following kind of needle decomposition (\cite[Theorems~1.2, 1.5]{Kl}),
where $u$ is called the \emph{guiding function} acting as a `guide' of the decomposition.

\begin{theorem}[Needle decomposition]\label{th:Ndl}
Let $(M,g,\fm)$ be a complete weighted Riemannian manifold satisfying $\Ric_N \ge K$,
and take a function $f \in L^1(\fm)$ such that $\int_{M} f \,d\fm = 0$ and
$\int_{M}|f(x)|d(x_{0},x) \,\fm(dx) < \infty$ for some $x_0 \in M$.
Then there exists a $1$-Lipschitz function $u$ on $M$, a partition $\{X_q\}_{q \in Q}$ of $M$,
a measure $\nu$ on $Q$ and a family of probability measures $\{\fm_q\}_{q \in Q}$ on $M$
satisfying the following.
\begin{enumerate}[{\rm (i)}]
\item\label{Ndl1}
For any measurable set $A \subset M$, we have $\fm(A) = \int_{Q} \fm_q(A) \,\nu(dq)$.
Moreover, for $\nu$-almost every $q \in Q$, we have $\supp(\fm_q)=X_q$.

\item\label{Ndl2}
For $\nu$-almost every $q \in Q$, $X_q$ is a transport ray associated with $u$.
Moreover, if $X_q$ is not a singleton, then the weighted Ricci curvature of
$(X_q,|\cdot|,\fm_q)$ satisfies $\Ric_N \ge K$.

\item\label{Ndl3}
For $\nu$-almost every $q \in Q$, we have $\int_{X_q} f \,d\fm_q =0$.
\end{enumerate}
\end{theorem}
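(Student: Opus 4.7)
The plan is to construct the decomposition from an optimal transport problem whose cost is the Riemannian distance, following Klartag's $L^1$-optimal transport approach. Decompose $f = f_+ - f_-$ into its positive and negative parts. Since $\int_M f\,d\fm = 0$, the measures $\mu_0 := f_+\fm$ and $\mu_1 := f_-\fm$ have the same (finite) total mass, and the integrability assumption $\int_M |f(x)|\,d(x_0,x)\,\fm(dx) < \infty$ guarantees that the Kantorovich problem with cost $c(x,y) = d(x,y)$ admits a finite optimum and a Kantorovich dual maximizer. I would take $u$ to be such a dual maximizer, which is automatically $1$-Lipschitz on $M$.

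Next, I would construct the transport rays. The idea is that on the support of an optimal plan for the Monge--Kantorovich problem, we have $u(x) - u(y) = d(x,y)$, and maximal subsets with this property foliate (up to a negligible set) the support into geodesic arcs. Concretely, declare $x \sim y$ if there exists a minimal geodesic from one to the other along which $u$ has slope $\pm 1$, and take equivalence classes; the resulting $X_q$ are the transport rays of Definition~\ref{df:T-ray}. The measure $\fm$ can then be disintegrated along this partition using the standard measurable selection/disintegration theorem, yielding $\nu$ on the quotient $Q$ and conditional probability measures $\fm_q$ concentrated on $X_q$, giving \eqref{Ndl1}. The balance property \eqref{Ndl3} (namely $\int_{X_q} f\,d\fm_q = 0$) is a consequence of cyclical monotonicity of the optimal plan: if $\int_{X_q} f\,d\fm_q \neq 0$ for a positive $\nu$-measure set of $q$'s, one could produce a cheaper plan by redistributing mass along that needle, contradicting optimality.

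The delicate part—and the step I expect to be the main obstacle—is \eqref{Ndl2}, showing that on each nontrivial needle the one-dimensional weighted measure $\fm_q$ (after identifying $X_q$ with a closed interval via arclength) inherits $\Ric_N \ge K$. Here one computes the density of $\fm_q$ with respect to the arclength measure by localizing the disintegration via tubular neighborhoods transverse to the ray, and the Jacobian of the exponential map along the ray yields a density $h_q$. The curvature comparison then boils down to a Jacobi-field estimate: the determinant of the Jacobi field along a geodesic of a weighted manifold with $\Ric_N \ge K$ satisfies the concavity inequality characterizing $\CD(K,N)$, and this passes to the weighted one-dimensional density. In Klartag's formalism this is recast as the statement that geodesics of $\CD(K,N)$ spaces are themselves $\CD(K,N)$ spaces, and the technical heart is verifying the required semiconcavity of $\log h_q$.

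Finally, once \eqref{Ndl1}--\eqref{Ndl3} are established it only remains to check standard measurability of the assignments $q \mapsto X_q$, $q \mapsto \fm_q$, which follows from the analyticity of the relevant selection procedures applied to the transport set. Since the assertion is \cite[Theorems~1.2, 1.5]{Kl}, in practice I would simply invoke those theorems after verifying the hypotheses (completeness, $\Ric_N \ge K$, and the $L^1$-moment condition on $f$); the argument sketched above is their conceptual skeleton.
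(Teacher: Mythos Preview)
Your proposal is appropriate: the paper does not prove this theorem at all but simply cites it as \cite[Theorems~1.2, 1.5]{Kl}, so your final sentence---invoke Klartag's theorems after checking the hypotheses---is exactly what the paper does. The conceptual sketch you give of the $L^1$-optimal transport construction (Kantorovich potential as guiding function, transport-ray partition, disintegration, Jacobi-field estimate for the curvature inheritance on needles) is a correct outline of Klartag's argument and goes beyond what the paper itself records.
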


The first assertion \eqref{Ndl1} includes the measurability of $\fm_q(A)$ in $q \in Q$.
We also observe from \eqref{Ndl1} that $\nu(Q)=\fm(M)$.
In \eqref{Ndl2}, by denoting $\fm_q=\e^{-\psi} \,dx$ along $X_q$,
$\psi$ is smooth on the interior of $X_q$ and $\Ric_N \ge K$ means that $\psi''  \ge K+(\psi')^2/(N-1)$.

Our argument on quantitative isoperimetric inequalities is indebted to
Klartag's proof in \cite{Kl} of the isoperimetric inequality (Theorem~\ref{th:isop})
by the needle decomposition.
Let us recall it for later convenience.

Let $(M,g,\fm)$ satisfy $\Ric_N \ge K$, $\diam(M) \le D$ and $\fm(M) = 1$.
Given $\theta \in (0,1)$, we fix an arbitrary Borel set $A \subset M$ with $\fm(A) = \theta$.
Consider the function $f(x) := \chi_{A}(x) -\theta$.
Then we find $\int_{M} f \,d\fm = 0$,
and obtain $(Q,\nu)$ and $\{(X_q,\fm_q)\}_{q \in Q}$
associated with $f$ as in Theorem~\ref{th:Ndl}.
Note that \eqref{Ndl3} in Theorem~\ref{th:Ndl} implies $\fm_q(A) =\theta$ for $\nu$-almost every $q \in Q$
(and hence $X_q$ is not a singleton).
Moreover, $(X_q,|\cdot|,\fm_q)$ enjoys $\CD(K,N)$ by \eqref{Ndl2} and clearly $\diam(X_q) \le D$.
Therefore, the $1$-dimensional isoperimetric inequality yields
$\sP(A \cap X_q) \ge  \cI_{(K,N,D)}(\theta)$ for $\nu$-almost every $q \in Q$,
where $\sP(A \cap X_q)$ denotes the perimeter of $A \cap X_q$ in $(X_q,|\cdot|,\fm_q)$.
Together with Lemma~\ref{lm:Lem4.1} below, we conclude that
\[ \sP(A) \ge \int_Q \sP(A \cap X_q) \,\nu(dq) \ge \cI_{(K,N,D)}(\theta). \]
Taking the infimum in $A$ completes the proof of $\cI_{(M,\fm)}(\theta) \ge \cI_{(K,N,D)}(\theta)$.

\begin{remark}[Regularity of $\psi$]\label{rm:Ndl}
As we mentioned above, thanks to \cite{Kl},
$\fm_q$ has a smooth density and $\Ric_N \ge K$ is regarded as $\psi'' \ge K+(\psi')^2/(N-1)$.
For our purpose, however, the weak formulation $\CD(K,N)$ is sufficient.
In the case of $N=\infty$, $\CD(K,\infty)$ is equivalent to $\psi'' \ge K$ in the weak sense
(also called the \emph{$K$-convexity}), namely
\begin{equation}\label{eq:Kcon}
\psi\big( (1-t)x+ty \big) \le (1-t)\psi(x) +t\psi(y) -\frac{K}{2}(1-t)td(x,y)^2
\end{equation}
for all $x,y \in X_q$ and $t \in (0,1)$.
In the non-smooth framework of essentially non-branching $\CD(K,N)$-spaces as in \cite{CM1,CMM},
one cannot expect the smoothness and only the weak formulation makes sense.
\end{remark}

\section{Difference of weight functions}\label{sc:psi}%%%%%%%%
%%%%%%%%%%%%%%%%%

Henceforth, we normalize as $K=1$ without loss of generality.
In this and the following two sections, we work on $1$-dimensional spaces enjoying $\CD(1,\infty)$,
appearing as needles in Theorem~\ref{th:Ndl}.
Let $I \subset \R$ be a (bounded or unbounded) closed interval
equipped with a measure $\fm=\e^{-\psi}\, dx$,
where $dx$ denotes the $1$-dimensional Lebesgue measure
and $\psi$ is a locally Lipschitz function.
Then, as we mentioned in Remark~\ref{rm:Ndl},
for $(I,|\cdot|,\fm)$ satisfying $\CD(1,\infty)$ means that $\psi$ is $1$-convex as in \eqref{eq:Kcon},
\[ \psi\big( (1-t)x+ty \big) \le (1-t)\psi(x) +t\psi(y) -\frac{(1-t)t}{2}|x-y|^2 \]
for any $x,y \in I$ and $t \in (0,1)$.
The following useful property due to Bobkov (\cite[Proposition~2.1]{Bob}) is then available.

\begin{lemma}\label{lm:Bobkov}
Let $\fm=\e^{-\psi}\, dx$ be a probability measure on a closed interval $I \subset \R$
such that $\psi$ is convex.
Then the minimum of $\sP(A)$ on the class of all Borel sets $A \subset I$ with $\fm(A) = \theta$
coincides with the minimum on the subclass consisting of
$($semi-infinite$)$ intervals $(-\infty,a] \cap I$ and $[b,\infty) \cap I$.
\end{lemma}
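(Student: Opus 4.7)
The plan is to establish the nontrivial direction: for every Borel $A \subset I$ with $\fm(A) = \theta$, produce a semi-infinite sub-interval of $I$ of the same $\fm$-measure and no larger perimeter. I follow Bobkov's variational approach from \cite{Bob}, adapted to the convex-$\psi$ setting.

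By lower semicontinuity of the perimeter functional \eqref{eq:peri} and $L^1(\fm)$-approximation on the line, it suffices to treat $A = \bigsqcup_{i=1}^{k}(a_i,b_i) \subset I$, a finite disjoint union of open intervals with pairwise disjoint closures. For such $A$ the perimeter reduces to the explicit formula
\[
\sP(A) \;=\; \sum_{c \in \partial A \cap \mathrm{int}(I)} \rho(c), \qquad \rho := \e^{-\psi},
\]
with boundary points at the endpoints of $I$ contributing nothing, since the test functions in \eqref{eq:peri} can be chosen to match $\chi_A$ there.

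I would transport the problem to $[0,1]$ via the cumulative distribution $F(x) := \fm((-\infty,x]\cap I)$, which is a homeomorphism $I \to [0,1]$ because $\rho > 0$ on $\mathrm{int}(I)$. Setting $J(y) := \rho(F^{-1}(y))$, a direct computation gives $J'(y) = -\psi'(F^{-1}(y))$, which is nonincreasing in $y$ because $\psi'$ is nondecreasing (by convexity of $\psi$) and $F^{-1}$ is increasing; hence $J$ is concave on $[0,1]$. With $B := F(A)$ one has $|B| = \theta$ (Lebesgue) and $\sP(A) = \sum_{y \in \partial B \cap (0,1)} J(y)$. The claim becomes that this boundary-weighted functional on competitors $B$ of Lebesgue measure $\theta$ is no less than $\min\{J(\theta), J(1-\theta)\}$, the values attained respectively by $B = [0,\theta]$ and $B = [1-\theta,1]$ (and pulled back to the left and right semi-infinite sub-intervals of $I$).

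For single-interval competitors $B = (c, c+\theta)$ with $c \in (0, 1-\theta)$, the functional $c \mapsto J(c) + J(c+\theta)$ is concave (sum of two concaves), so its infimum on the open interval is attained at the endpoints, yielding $J(\theta)$ and $J(1-\theta)$ as $c \to 0^+$ and $c \to (1-\theta)^-$. For multi-interval competitors with $k \ge 2$ components, I would apply an iterative gap-closing argument: a measure-preserving one-parameter flow that drives two adjacent components together, whose effect on the perimeter can be analyzed via the monotonicity of $J'$ together with the concavity of $J$ to see that $\sP$ is nonincreasing along the flow. The flow terminates when two components merge (reducing $k$ by one) or when an interior endpoint reaches $\{0,1\}$ and leaves the boundary sum. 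Finitely many iterations reduce the problem to the single-interval case.

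The main technical obstacle is the sign verification in the gap-closing step, since concavity of $J$ alone is insufficient and one must combine it with the precise monotonicity of $J' = -\psi' \circ F^{-1}$ under the measure-preservation constraint. Bobkov's original argument in \cite{Bob} handles exactly this situation for log-concave probability measures on $\R$ and applies essentially verbatim in our framework; the reformulation above only serves to make the role of the concave weight $J$ (arising from convexity of $\psi$) fully explicit.
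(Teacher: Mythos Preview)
The paper does not give a proof of this lemma: it cites Bobkov \cite[Proposition~2.1]{Bob} for the analogous statement phrased in terms of the Minkowski content $\fm^+$, and then appeals to \cite[Theorem~3.6]{ADG} to identify the minimum of $\fm^+$ with that of $\sP$. Your proposal instead outlines Bobkov's actual argument---reduction to finite unions of intervals, the CDF change of variables $F:I\to[0,1]$ turning the density into a concave weight $J$ on $[0,1]$, the concavity argument for single intervals, and the gap-closing induction on the number of components. The reformulation is correct (in particular $J'=-\psi'\circ F^{-1}$ and its concavity are as you state), and is more informative than the paper's bare citation; but since you ultimately defer the decisive sign verification in the gap-closing step back to \cite{Bob}, the logical dependence is the same. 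One small point worth making explicit: Bobkov's statement is for $\fm^+$, and while on finite unions of intervals $\sP$ and $\fm^+$ coincide pointwise (so your direct treatment of $\sP$ is legitimate there), it is the paper's appeal to \cite{ADG} that covers the passage from $\fm^+$ to $\sP$ at the level of general Borel competitors without redoing the approximation argument you sketch in your first paragraph.
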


%We remark that in \cite{Bob} measures $\fm$ on $\R$ with $\supp(\fm) \neq \R$
%are included in consideration, and then $\supp(\fm)$ coincides with $I$ above.
We remark that what follows from \cite{Bob} is the analogous assertion for $\fm^+$,
however, one can see that its minimum coincides with that of $\sP$ by,
for instance, \cite[Theorem~3.6]{ADG}.
We shall compare $\fm$ on $I$ with the Gaussian measure $\bm{\gamma}$ on $\R$
with mean $0$ and variance $1$, denoted by
\[ \bm{\gamma} :=\frac{1}{\sqrt{2\pi}} \e^{-x^2/2} \,dx =\e^{-\bm{\psi}_{\g}(x)} \,dx,
 \qquad \bm{\psi}_{\g}(x) :=\log\big( \sqrt{2\pi} \big) +\frac{1}{2}x^2. \]
Recall from Subsection~\ref{ssc:isop} that
the isoperimetric profile of $(\R,|\cdot|,\bm{\gamma})$ is given by
\[ \cI_{(\R,\bm{\gamma})}(\theta) =\cI_{(1,\infty,\infty)}(\theta) =\e^{-\bm{\psi}_{\g}(a_{\theta})}, \qquad
 \theta=\bm{\gamma}\big( (-\infty,a_{\theta}] \big).\]
Our goal in this section is to show the following core estimate.

\begin{proposition}[Difference of weight functions]\label{pr:psi}
Let $I \subset \R$ be a closed interval equipped with a probability measure
$\fm=\e^{-\psi} \,dx$ such that $\psi$ is $1$-convex.
Fix $\theta \in (0,1)$ and assume that
\begin{equation}\label{eq:a_theta}
\int_{I \cap (-\infty,a_{\theta}]} \e^{-\psi} \,dx =\theta
\end{equation}
and that
\begin{equation}\label{eq:deficit}
\e^{-\psi(a_{\theta})} \le \e^{-\bm{\psi}_{\g}(a_{\theta})} +\delta
\end{equation}
holds for sufficiently small $\delta>0$ $($relative to $\theta)$.
Then we have
\begin{equation}\label{eq:delta>}
\psi(x)-\bm{\psi}_{\g}(x)
 \ge \big( \psi'_+(a_{\theta})-a_{\theta} \big) (x-a_{\theta}) -\omega(\theta) \delta
\end{equation}
for every $x \in I$, and
\begin{equation}\label{eq:delta<}
\psi(x)-\bm{\psi}_{\g}(x)
 \le \big( \psi'_+(a_{\theta})-a_{\theta} \big) (x-a_{\theta}) +\omega(\theta) \sqrt{\delta}
\end{equation}
for every $x \in [S,T]$ such that $\lim_{\delta \to 0}S=-\infty$ and $\lim_{\delta \to 0}T=\infty$,
where $\psi'_+$ denotes the right derivative of $\psi$ and
$\omega(\theta)$ is a constant depending only on $\theta$.
\end{proposition}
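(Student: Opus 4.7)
The key structural observation is that $h(x) := \psi(x) - \bm{\psi}_{\g}(x)$ is convex on $I$, since $\bm{\psi}_{\g}(x) = \log\sqrt{2\pi} + x^2/2$ and the $1$-convexity of $\psi$ exactly cancels the $x^2/2$. Set $\alpha := \psi'_{+}(a_\theta) - a_\theta = h'_{+}(a_\theta)$ and $\ell(x) := \alpha(x - a_\theta)$, so that both \eqref{eq:delta>} and \eqref{eq:delta<} become pointwise estimates on the convex function $g(x) := h(x) - \ell(x)$, which satisfies $g'_{+}(a_\theta) = 0$ and therefore attains its global minimum at $a_\theta$. The whole argument consists of measuring how far $g$ can rise above that minimum.

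\textbf{Proof of \eqref{eq:delta>}.} Bobkov's Lemma~\ref{lm:Bobkov} applied to the $1$-convex $\psi$ identifies $I \cap (-\infty, a_\theta]$ as a perimeter minimiser at mass $\theta$ in $(I, |\cdot|, \fm)$, with perimeter $e^{-\psi(a_\theta)}$. Comparison with the Gaussian profile $\cI_{(\R, \bm{\gamma})}(\theta) = e^{-\bm{\psi}_{\g}(a_\theta)}$ gives $\psi(a_\theta) \le \bm{\psi}_{\g}(a_\theta)$, while the deficit hypothesis \eqref{eq:deficit} together with $-\log(1+u) \ge -u$ gives $\psi(a_\theta) \ge \bm{\psi}_{\g}(a_\theta) - \sqrt{2\pi}\, e^{a_\theta^2/2}\,\delta$. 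Hence $g(a_\theta) = h(a_\theta) \in [-\omega(\theta)\delta,\,0]$ with $\omega(\theta) = \sqrt{2\pi}\, e^{a_\theta^2/2}$, and convexity of $g$ at its minimum gives $g(x) \ge g(a_\theta) \ge -\omega(\theta)\delta$ for every $x \in I$, which is \eqref{eq:delta>}.

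\textbf{Drift control.} Using $g \ge -\omega(\theta)\delta$, so $e^{-g} \le 1 + \omega(\theta)\delta$, the mass identities $\int_I e^{-\psi}\,dx = 1$ and $\int_{I \cap (-\infty,a_\theta]} e^{-\psi}\,dx = \theta$ translate, after completing the square in $e^{-\bm{\psi}_{\g}(x) - \ell(x)}$ and writing $\beta := a_\theta + \alpha$, into
\[ f(\beta) \ge f(a_\theta)\bigl(1 - \omega(\theta)\delta\bigr), \qquad \tilde{f}(\beta) \ge \tilde{f}(a_\theta)\bigl(1 - \omega(\theta)\delta\bigr), \]
where $f(\beta) := e^{\beta^2/2}\,\bm{\gamma}((-\infty,\beta])$ and $\tilde{f}(\beta) := e^{\beta^2/2}\,\bm{\gamma}((\beta,\infty))$. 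A short computation with Mills' ratio shows that $f$ is strictly increasing and $\tilde{f}$ strictly decreasing on $\R$ (with derivative bounded below on compact sets), so the two inequalities squeeze $\beta$ into $|\beta - a_\theta| \le \omega(\theta)\delta$, i.e.\ $|\alpha| \le \omega(\theta)\delta$.

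\textbf{Proof of \eqref{eq:delta<}.} The drift control gives $\int_I e^{-\bm{\psi}_{\g} - \ell}\,dx = 1 + O(\delta)$, so the mass identity rewrites as $\int_I (1 - e^{-g})\,e^{-\bm{\psi}_{\g} - \ell}\,dx = O(\omega(\theta)\delta)$. Splitting by the sign of $g$ and absorbing the $\{g \le 0\}$ contribution via the lower bound of Step~2 yields $\int_{\{g > 0\}} (1 - e^{-g})\,e^{-\bm{\psi}_{\g} - \ell}\,dx \le \omega(\theta)\delta$, and combining with $1 - e^{-g} \ge \tfrac{1}{2}\min(g, 1)$ on $\{g \ge 0\}$ produces the weighted $L^1$-bound
\[ \int_I \min(g_+,1)\,e^{-\bm{\psi}_{\g} - \ell}\,dx \le \omega(\theta)\delta. \]
Convexity of $g$ with $g'_{+}(a_\theta) = 0$ makes $g$ non-decreasing on $[a_\theta,\infty)$, so if $g(T) \in (0,1]$ with $T > a_\theta$ then $g \ge g(T)$ on $[T,\infty)$ and consequently $g(T)\cdot \bm{\gamma}((T+\alpha,\infty)) \le C\,\omega(\theta)\delta$. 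Choosing $T$ with $\bm{\gamma}((T+\alpha,\infty)) = \sqrt{\delta}$ yields $g(T) \le \omega(\theta)\sqrt{\delta}$, and monotonicity propagates this to the whole of $[a_\theta, T]$; a symmetric argument on the left produces $S$ with $\bm{\gamma}((-\infty, S+\alpha]) = \sqrt{\delta}$, so both $T \to +\infty$ and $S \to -\infty$ as $\delta \to 0$, completing \eqref{eq:delta<}. The main obstacle is precisely this last passage: converting a weighted $L^1$-bound of order $\delta$ into a pointwise bound via the Gaussian exponential tail costs exactly a factor $\sqrt{\delta}$, which explains the asymmetry between the $\delta$ rate in \eqref{eq:delta>} and the $\sqrt{\delta}$ rate in \eqref{eq:delta<}.
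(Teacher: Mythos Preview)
Your proof is correct and follows the same overall strategy as the paper's: the lower bound \eqref{eq:delta>} from convexity of $\psi-\bm{\psi}_{\g}$ together with the deficit hypothesis, the control $|\alpha|\le C(\theta)\delta$ from the two mass constraints, and the upper bound \eqref{eq:delta<} from the fact that the tail mass beyond $T$ is of order $\sqrt{\delta}$. Your packaging via the auxiliary convex function $g=(\psi-\bm{\psi}_{\g})-\alpha(x-a_\theta)$, minimised at $a_\theta$, is a genuine streamlining: it unifies the treatment of $I_-$ and $I_+$, replaces the paper's Taylor expansion leading to \eqref{eq:psi'/e} by a direct Mills-ratio monotonicity argument, and replaces the paper's explicit definition of $T$ via \eqref{eq:Tdef} and the subsequent case analysis \eqref{eq:e-to-0+}--\eqref{eq:e-to-0-} by the cleaner ``weighted $L^1$ bound on $\min(g_+,1)$, then Chebyshev, then monotonicity'' passage. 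The paper instead bounds $\rho(T)$ by estimating $\int_{[T,\infty)\cap I}\e^{-\psi}\ge\sqrt{\delta}$ directly from the definition of $T$, and then interpolates by convexity. Two small points: your appeal to Bobkov's lemma is unnecessary (the bound $g(a_\theta)\le 0$ follows from the isoperimetric inequality applied to the half-interval, and is in any case not used later); and in the step ``$g(T)\cdot\bm{\gamma}((T+\alpha,\infty))\le C\omega\delta$'' you implicitly replace the integral over $[T,\infty)\cap I$ by one over $[T,\infty)$, which is justified since the total-mass constraint together with $g\ge -\omega\delta$ forces $\int_{\R\setminus I}\e^{-\bm{\psi}_{\g}-\ell}\,dx=O(\delta)\ll\sqrt{\delta}$, in particular ensuring $T\in I$.
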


Note that \eqref{eq:a_theta} is achieved by translating $I$ in $\R$.
Moreover, thanks to Lemma~\ref{lm:Bobkov}, we can assume
$\cI_{(I,\fm)}(\theta) =\e^{-\psi(a_{\theta})}$ by reversing $I$ if necessary.
Since $\cI_{(I,\fm)}(\theta) \ge \cI_{(\R,\bm{\gamma})}(\theta)=\e^{-\bm{\psi}_{\g}(a_{\theta})}$
holds in general (Theorem~\ref{th:isop}),
$\delta>0$ represents the \emph{deficit} in this isoperimetric inequality.
See \eqref{eq:Tdef} and \eqref{eq:Sdef} below for the precise choices of $T$ and $S$,
as well as \eqref{eq:gT}, \eqref{eq:eT} and \eqref{eq:geS} for their asymptotic behaviors as $\delta \to 0$.
Finally, we stress that the lower bound \eqref{eq:delta>} holds on whole $I$
whereas the upper bound \eqref{eq:delta<} is valid only on $[S,T]$.
This is natural since the decay of $\fm$ near infinity does not effect the isoperimetric profile
and thus can be arbitrarily fast.

\begin{proof}
We will denote by $\psi'_+$ (resp.\ $\psi'_-$) the right (resp.\ left) derivative of $\psi$.
The $1$-convexity of $\psi$ implies that $\psi'_+$ and $\psi'_-$ always exist
and $\psi'_- \le \psi'_+$ holds.
Put $I_-:=I \cap (-\infty,a_{\theta}]$ and $I_+:=I \cap [a_{\theta},\infty)$.

\begin{step}[$\psi'_+(a_{\theta})$ and $a_{\theta}$]%%%%%%%
We first estimate the difference of $\psi'_+(a_{\theta})$ and
$\bm{\psi}'_{\g}(a_{\theta})=a_{\theta}$.
We deduce from the $1$-convexity of $\psi$ and the hypothesis \eqref{eq:deficit} that, for $x \in I_+$,
\begin{align}
\psi(x)
&\ge \psi(a_{\theta}) +\psi'_+(a_{\theta})(x-a_{\theta}) +\frac{(x-a_{\theta})^2}{2} \nonumber\\
&\ge -\log(\e^{-\bm{\psi}_{\g}(a_{\theta})}+\delta)
 +\psi'_+(a_{\theta})(x-a_{\theta}) +\frac{(x-a_{\theta})^2}{2}.
\label{eq:psi(x)}
\end{align}
Hence we have
\begin{align}
1-\theta &=\int_{I_+} \e^{-\psi} \,dx \nonumber\\
&\le (\e^{-\bm{\psi}_{\g}(a_{\theta})}+\delta)
 \int_{I_+} \exp\bigg( {-}\psi'_+(a_{\theta})(x-a_{\theta})
 -\frac{(x-a_{\theta})^2}{2} \bigg) \,dx \nonumber\\
&= (1+\e^{\bm{\psi}_{\g}(a_{\theta})} \delta)
 \int_{I_+} \exp\big( {-}\psi'_+(a_{\theta})(x-a_{\theta}) +a_{\theta}x -a_{\theta}^2 \big)
 \,\bm{\gamma}(dx) \nonumber\\
&= (1+\e^{\bm{\psi}_{\g}(a_{\theta})} \delta)
 \int_{I_+} \exp\Big( \big( a_{\theta}-\psi'_+(a_{\theta}) \big) (x-a_{\theta}) \Big)
 \,\bm{\gamma}(dx). \label{eq:1-theta}
\end{align}
Since $\bm{\gamma}([a_{\theta},\infty))=1-\theta$ by the choice of $a_{\theta}$,
this estimate shows that $\psi'_+(a_{\theta})-a_{\theta} \le c_1(\theta,\delta)$
with $\lim_{\delta \to 0} c_1(\theta,\delta) =0$.
We similarly observe $\psi'_-(a_{\theta})-a_{\theta} \ge -c_1$, thereby
\[ -c_1 \le \psi'_-(a_{\theta})-a_{\theta} \le \psi'_+(a_{\theta})-a_{\theta} \le c_1. \]

In order to obtain a more precise estimate,
we assume $\alpha:=\psi'_+(a_{\theta}) -a_{\theta} \ge 0$ and deduce from
$\e^{-t} \le 1 -t +(t^2/2)$ for $t \ge 0$ that
\begin{align*}
&\int_{a_{\theta}}^{\infty}
 \exp\big( {-}\alpha (x-a_{\theta}) \big) \,\bm{\gamma}(dx) \\
&\le \int_{a_{\theta}}^{\infty}
 \bigg( 1 -\alpha(x-a_{\theta}) +\frac{\alpha^2}{2}(x-a_{\theta})^2 \bigg) \,\bm{\gamma}(dx) \\
&= \bigg( 1+\alpha a_{\theta} +\frac{\alpha^2}{2} a_{\theta}^2 \bigg) (1-\theta)
 -\int_{a_{\theta}}^{\infty} (\alpha +\alpha^2 a_{\theta}) x \,\bm{\gamma}(dx)
 +\int_{a_{\theta}}^{\infty} \frac{\alpha^2}{2}x^2 \,\bm{\gamma}(dx) \\
&= \bigg( 1+\alpha a_{\theta} +\frac{\alpha^2}{2} a_{\theta}^2 \bigg) (1-\theta)
 -(\alpha+\alpha^2 a_{\theta}) \frac{\e^{-a_{\theta}^2/2}}{\sqrt{2\pi}}
 +\frac{\alpha^2}{2} \bigg( \frac{a_{\theta} \e^{-a_{\theta}^2/2}}{\sqrt{2\pi}} +1-\theta \bigg) \\
&= \bigg( 1+\alpha a_{\theta} +\frac{\alpha^2}{2}(a_{\theta}^2 +1) \bigg) (1-\theta)
 -\bigg( \alpha+\frac{\alpha^2 a_{\theta}}{2} \bigg) \frac{\e^{-a_{\theta}^2/2}}{\sqrt{2\pi}}.
\end{align*}
Substituting this into \eqref{eq:1-theta} yields
\[ (1+\e^{\bm{\psi}_{\g}(a_{\theta})} \delta)
\bigg\{ \bigg( \alpha+\frac{\alpha^2 a_{\theta}}{2} \bigg) \frac{\e^{-a_{\theta}^2/2}}{\sqrt{2\pi}}
 -\bigg( \alpha a_{\theta} +\frac{\alpha^2}{2}(a_{\theta}^2 +1) \bigg) (1-\theta) \bigg\}
 \le (1-\theta) \e^{\bm{\psi}_{\g}(a_{\theta})} \delta. \]
Recalling $\lim_{\delta \to 0}\alpha =0$, we obtain
\begin{equation}\label{eq:alpha}
\bigg( \frac{\e^{-a_{\theta}^2/2}}{\sqrt{2\pi}} -a_{\theta}(1-\theta) \bigg)
 \limsup_{\delta \to 0} \frac{\alpha}{\delta} \le (1-\theta) \sqrt{2\pi} \e^{a_{\theta}^2/2}.
\end{equation}

In the LHS of \eqref{eq:alpha}, we shall show that
\begin{equation}\label{eq:ted}
\frac{\e^{-a_{\theta}^2/2}}{\sqrt{2\pi}} -a_{\theta}(1-\theta)
 =\cI_{\infty}(\theta)-a_{\theta}(1-\theta) >0,
\end{equation}
where we put $\cI_{\infty}:=\cI_{(\R,\bm{\gamma})}$ similarly to the proof of Lemma~\ref{lm:I_D}.
Notice that the claim is clear when $\theta \le 1/2$ ($a_{\theta} \le 0$),
thereby we assume $\theta>1/2$.
Since
\[ \frac{d}{d\theta}\big[ \cI_{\infty}(\theta)-a_{\theta}(1-\theta) \big]
 =\cI'_{\infty}(\theta) +a_{\theta} -\frac{da_{\theta}}{d\theta}(1-\theta)
 =-\frac{da_{\theta}}{d\theta}(1-\theta) <0 \]
by \eqref{eq:I'} (with $K=1$), it suffices to see $\lim_{\theta \to 1} a_{\theta}(1-\theta)=0$.
Observe that
\[ (2\theta-1)^2 =\frac{1}{2\pi} \bigg( \int_{-a_{\theta}}^{a_{\theta}} \e^{-x^2/2} \,dx \bigg)^2
 \ge \int_0^{a_{\theta}} \e^{-r^2/2} r \,dr
 = 1-\e^{-a_{\theta}^2/2}. \]
This yields
\[ 0 \le 4a_{\theta} \theta(1-\theta) \le a_{\theta} \e^{-a_{\theta}^2/2} \,\to\, 0 \]
as $\theta \to 1$ ($a_{\theta} \to \infty$).
Thus we have the claim \eqref{eq:ted}. 
This in particular shows that \eqref{eq:alpha} holds regardless of $\alpha \ge 0$ or not.

It follows from \eqref{eq:alpha} that
\[ \limsup_{\delta \to 0} \frac{\psi'_+(a_{\theta}) -a_{\theta}}{\delta}
 \le \frac{2\pi (1-\theta) \e^{a_{\theta}^2/2}}{\e^{-a_{\theta}^2/2} -\sqrt{2\pi}a_{\theta}(1-\theta)}. \]
One can similarly show that
\[ \liminf_{\delta \to 0} \frac{\psi'_-(a_{\theta}) -a_{\theta}}{\delta}
 \ge -\frac{2\pi \theta \e^{a_{1-\theta}^2/2}}{\e^{-a_{1-\theta}^2/2} -\sqrt{2\pi}a_{1-\theta} \theta}
 =-\frac{2\pi \theta \e^{a_{\theta}^2/2}}{\e^{-a_{\theta}^2/2} +\sqrt{2\pi}a_{\theta} \theta}. \]
Therefore we conclude
\begin{equation}\label{eq:psi'/e}
-C_2(\theta)
 \le \liminf_{\delta \to 0} \frac{\psi'_-(a_{\theta}) -a_{\theta}}{\delta}
 \le \limsup_{\delta \to 0} \frac{\psi'_+(a_{\theta}) -a_{\theta}}{\delta}
 \le C_2(\theta)
\end{equation}
with $C_2(\theta)>0$ depending only on $\theta$.
\end{step}%%%%%%%

\begin{step}[Choice of $T$]%%%%%%%
In order to fix a range where we estimate $\psi$ from above,
we take $T>a_{\theta}$ such that
\begin{equation}\label{eq:Tdef}
(\e^{-\bm{\psi}_{\g}(a_{\theta})} +\delta)
 \int_{a_{\theta}}^T \exp\bigg( {-}\psi'_+(a_{\theta})(x-a_{\theta}) -\frac{(x-a_{\theta})^2}{2} \bigg) \,dx
 =1-\theta -\sqrt{\delta}.
\end{equation}
Recall from \eqref{eq:1-theta} that
\[ (\e^{-\bm{\psi}_{\g}(a_{\theta})} +\delta)
 \int_{I_+} \exp\bigg( {-}\psi'_+(a_{\theta})(x-a_{\theta}) -\frac{(x-a_{\theta})^2}{2} \bigg) \,dx
 \ge 1-\theta, \]
therefore such $T \in I_+$ indeed exists.
Moreover, as $\delta \to 0$, we deduce from $\psi'_+(a_{\theta}) \to a_{\theta}$ that $T \to \infty$.
Quantitatively, we put $C'_2:=C_2(\theta)+1$ and observe from \eqref{eq:Tdef} and \eqref{eq:psi'/e} that,
for sufficiently small $\delta$,
\begin{align*}
1-\theta -\sqrt{\delta}
&\le  (\e^{-\bm{\psi}_{\g}(a_{\theta})} +\delta)
 \int_{a_{\theta}}^T \exp\bigg( {-}a_{\theta}(x-a_{\theta}) -\frac{(x-a_{\theta})^2}{2}
 +C'_2(x-a_{\theta}) \delta \bigg) \,dx \\
&= \frac{1+\e^{\bm{\psi}_{\g}(a_{\theta})} \delta}{\sqrt{2\pi}}
 \int_{a_{\theta}}^T \exp\bigg( {-}\frac{x^2}{2} +C'_2(x-a_{\theta}) \delta \bigg) \,dx \\
&= \frac{1+\e^{\bm{\psi}_{\g}(a_{\theta})} \delta}{\sqrt{2\pi}}
 \int_{a_{\theta}}^T \exp\bigg( {-}\frac{(x-C'_2 \delta)^2}{2}
 -C'_2 a_{\theta} \delta +\frac{(C'_2 \delta)^2}{2} \bigg) \,dx \\
&= (1+\e^{\bm{\psi}_{\g}(a_{\theta})} \delta)
 \exp\bigg( {-}C'_2 a_{\theta} \delta +\frac{(C'_2 \delta)^2}{2} \bigg)
 \bm{\gamma}\big( [a_{\theta}-C'_2 \delta,T-C'_2 \delta] \big).
\end{align*}
Combining this with
\[ \bm{\gamma}\big( [a_{\theta}-C'_2 \delta,T-C'_2 \delta] \big)
 \le \bm{\gamma}\big( [a_{\theta},T] \big) +\frac{C'_2 \delta}{\sqrt{2\pi}}
 = 1-\theta -\bm{\gamma}\big( [T,\infty) \big) +\frac{C'_2 \delta}{\sqrt{2\pi}}, \]
we obtain
\begin{equation}\label{eq:gT}
\bm{\gamma}\big( [T,\infty) \big) \le \sqrt{\delta} +C_3(\theta) \delta.
\end{equation}
Then we also find from
\[ \bm{\gamma}\big( [T,\infty) \big)
 \ge \frac{1}{\sqrt{2\pi}} \int_T^{\infty} \frac{x^2 +x+1}{(x+1)^2}\e^{-x^2/2} \,dx
 =-\frac{1}{\sqrt{2\pi}} \bigg[ \frac{\e^{-x^2/2}}{x+1} \bigg]_T^{\infty}
 =\frac{1}{\sqrt{2\pi}} \frac{\e^{-T^2/2}}{T+1} \]
that
\begin{equation}\label{eq:eT}
\frac{\e^{-T^2/2}}{T+1} \le \sqrt{2\pi\delta} +\sqrt{2\pi}C_3(\theta) \delta.
\end{equation}
\end{step}%%%%%%%

\begin{step}[Estimates of $\psi-\bm{\psi}_{\g}$ on $I_+$]%%%%%%%
Now we put
\[ \rho(x):=\psi(x) -\bm{\psi}_{\g}(x). \]
Our goal is to bound this difference of weight functions from below and above.
Note first that, by \eqref{eq:psi(x)}, for $x \in I_+$,
\begin{align*}
\psi(x)
&\ge -\log(\e^{-\bm{\psi}_{\g}(a_{\theta})} +\delta) +\psi'_+(a_{\theta})(x-a_{\theta})
 +\frac{(x-a_{\theta})^2}{2} \\
&= \bm{\psi}_{\g}(a_{\theta})  -\log(1+\e^{\bm{\psi}_{\g}(a_{\theta})} \delta)
 +\psi'_+(a_{\theta})(x-a_{\theta}) +\frac{(x-a_{\theta})^2}{2} \\
&= \bm{\psi}_{\g}(x) +\big( \psi'_+(a_{\theta})-a_{\theta} \big)(x-a_{\theta})
 -\log(1+\e^{\bm{\psi}_{\g}(a_{\theta})} \delta),
\end{align*}
thereby
\begin{equation}\label{eq:delta>+}
\rho(x) \ge \big( \psi'_+(a_{\theta})-a_{\theta} \big)(x-a_{\theta})
 -\log(1+\e^{\bm{\psi}_{\g}(a_{\theta})} \delta).
\end{equation}

Next, for $x \in I_+ \cap [T,\infty)$, it follows from the $1$-convexity of $\psi$ that
\begin{align*}
\psi(x) &\ge \psi(T) +\psi'_+(T) (x-T) +\frac{(x-T)^2}{2} \\
&\ge \bm{\psi}_{\g}(T) +\rho(T)
 +\big( \psi'_+(a_{\theta})+(T-a_{\theta}) \big) (x-T) +\frac{(x-T)^2}{2} \\
&= \bm{\psi}_{\g}(x) +\rho(T) +\big( \psi'_+(a_{\theta}) -a_{\theta} \big) (x-T).
\end{align*}
By integration, we have on one hand
\[ \int_{I_+ \cap [T,\infty)} \e^{-\psi} \,dx
 \le \e^{-\rho(T)} \int_{I_+ \cap [T,\infty)}
 \exp\Big( {-}\big( \psi'_+(a_{\theta}) -a_{\theta} \big)(x-T) \Big) \,\bm{\gamma}(dx), \]
and
\begin{align*}
&\int_T^{\infty}
 \exp\Big( {-}\big( \psi'_+(a_{\theta}) -a_{\theta} \big)(x-T) \Big) \,\bm{\gamma}(dx) \\
&=  \exp\bigg( \big( \psi'_+(a_{\theta}) -a_{\theta})T
 +\frac{(\psi'_+(a_{\theta})-a_{\theta})^2}{2} \bigg) \\
&\quad \times \frac{1}{\sqrt{2\pi}} \int_T^{\infty}
 \exp\bigg( {-}\frac{(x+\psi'_+(a_{\theta})-a_{\theta})^2}{2} \bigg) \,dx \\
&= \exp\bigg( \big( \psi'_+(a_{\theta}) -a_{\theta})T
 +\frac{(\psi'_+(a_{\theta})-a_{\theta})^2}{2} \bigg)
 \bm{\gamma} \big( [ T+\psi'_+(a)-a_{\theta},\infty) \big).
\end{align*}
On the other hand, \eqref{eq:a_theta}, \eqref{eq:psi(x)} and \eqref{eq:Tdef} yield
\begin{align*}
&\int_{I_+ \cap [T,\infty)} \e^{-\psi} \,dx
 =(1-\theta) -\int_{a_{\theta}}^T \e^{-\psi} \,dx \\
&\ge (1-\theta) -(\e^{-\bm{\psi}_{\g}(a_{\theta})} +\delta)
 \int_{a_{\theta}}^T \exp\bigg( {-}\psi'_+(a_{\theta})(x-a_{\theta}) -\frac{(x-a_{\theta})^2}{2} \bigg) \,dx \\
&= \sqrt{\delta}.
\end{align*}
Combining these we obtain
\[ \rho(T) \le
 \big( \psi'_+(a_{\theta}) -a_{\theta} \big)T +\frac{(\psi'_+(a_{\theta})-a_{\theta})^2}{2}
 +\log\bigg( \frac{1}{\sqrt{\delta}}
 \bm{\gamma}\big( [T+\psi'_+(a_{\theta}) -a_{\theta},\infty) \big) \bigg). \]
Now, for $x \in [a_{\theta},T]$,
\begin{align*}
\psi(x) &\le \psi(T) -\psi'_+(x)(T-x) -\frac{(T-x)^2}{2} \\
&\le \bm{\psi}_{\g}(T) +\rho(T) -\big( \psi'_+(a_{\theta})+(x-a_{\theta}) \big) (T-x) -\frac{(T-x)^2}{2} \\
&= \bm{\psi}_{\g}(x) +\rho(T) -\big( \psi'_+(a_{\theta})-a_{\theta} \big) (T-x).
\end{align*}
Therefore we conclude, for $x \in [a_{\theta},T]$,
\begin{align}
&\rho(x)
 \le \rho(T) -\big( \psi'_+(a_{\theta})-a_{\theta} \big) (T-x) \nonumber\\
&\le \big( \psi'_+(a_{\theta})-a_{\theta} \big)x +\frac{(\psi'_+(a_{\theta})-a_{\theta})^2}{2}
  +\log\bigg( \frac{1}{\sqrt{\delta}}
 \bm{\gamma}\big( [T+\psi'_+(a_{\theta})-a_{\theta},\infty) \big) \bigg). \label{eq:delta<+}
\end{align}
Thus we could estimate $\rho$ from below \eqref{eq:delta>+} on $I_+$
and from above \eqref{eq:delta<+} on $[a_{\theta},T]$.
\end{step}%%%%%%%

\begin{step}[Further estimate]%%%%%%%
Notice that the second term in the last line of \eqref{eq:delta<+}
is bounded above by \eqref{eq:psi'/e}.
In order to understand the behavior of the third term as $\delta \to 0$,
we separately discuss the cases of $\psi'_+(a_{\theta}) \ge a_{\theta}$
and $\psi'_+(a_{\theta})<a_{\theta}$.
In the easier case of $\psi'_+(a_{\theta}) \ge a_{\theta}$,
we deduce from \eqref{eq:Tdef} that
\begin{align*}
&\bm{\gamma}\big( [T+\psi'_+(a_{\theta})-a_{\theta},\infty) \big)
 \le \bm{\gamma}\big( [T,\infty) \big) \\
&\le (1-\theta) -\frac{1}{\sqrt{2\pi}}
 \int_{a_{\theta}}^T \exp\bigg( {-}\frac{x^2}{2}
 -\big( \psi'_+(a_{\theta}) -a_{\theta} \big) (x-a_{\theta}) \bigg) \,dx \\
&= (1-\theta) -\frac{1}{\sqrt{2\pi}}
 \frac{1-\theta-\sqrt{\delta}}{\e^{-\bm{\psi}_{\g}(a_{\theta})}+\delta} \e^{-a_{\theta}^2/2} \\
%&= (1-\theta) -\frac{1-\theta-\sqrt{\delta}}{1+\sqrt{2\pi} \e^{a_{\theta}^2/2} \delta} \\
&= \frac{\sqrt{\delta} +\sqrt{2\pi} (1-\theta) \e^{a_{\theta}^2/2} \delta}
{1+\sqrt{2\pi} \e^{a_{\theta}^2/2} \delta}.
\end{align*}
Since $\log(1+t) \le t$ for $t \ge 0$, we obtain
\begin{equation}\label{eq:e-to-0+}
\log\bigg( \frac{1}{\sqrt{\delta}} \bm{\gamma}\big( [T+\psi'_+(a_{\theta})-a_{\theta},\infty) \big) \bigg)
 \le \sqrt{2\pi} (1-\theta) \e^{a_{\theta}^2/2} \sqrt{\delta}.
\end{equation}

If $\psi'_+(a_{\theta})<a_{\theta}$, then we need a sharper estimate via \eqref{eq:psi'/e}.
Let us begin with
\[ \bm{\gamma}\big( [T+\psi'_+(a_{\theta})-a_{\theta},\infty) \big)
 = (1-\theta) -\frac{1}{\sqrt{2\pi}} \int_{a_{\theta}}^{T+\psi'_+(a_{\theta})-a_{\theta}} \e^{-x^2/2} \,dx. \]
By \eqref{eq:Tdef},
\begin{align*}
&\int_{a_{\theta}}^{T+\psi'_+(a_{\theta})-a_{\theta}} \e^{-x^2/2} \,dx\\
&= \int_{2a_{\theta} -\psi'_+(a_{\theta})}^T
 \exp\bigg( {-}\frac{(x+\psi'_+(a_{\theta})-a_{\theta})^2}{2} \bigg) \,dx \\
&\ge \int_{a_{\theta}}^T
 \exp\bigg( {-}\frac{(x+\psi'_+(a_{\theta})-a_{\theta})^2}{2} \bigg) \,dx
 -\big( a_{\theta} -\psi'_+(a_{\theta}) \big) \\
&= \e^{-\psi'_+(a_{\theta})^2/2}
 \int_{a_{\theta}}^T \exp\bigg( {-}\psi'_+(a_{\theta})(x-a_{\theta}) -\frac{(x-a_{\theta})^2}{2} \bigg) \,dx
 -\big( a_{\theta} -\psi'_+(a_{\theta}) \big) \\
&= \e^{-\psi'_+(a_{\theta})^2/2}
 \frac{1-\theta-\sqrt{\delta}}{\e^{-\bm{\psi}_{\g}(a_{\theta})}+\delta}
 -\big( a_{\theta} -\psi'_+(a_{\theta}) \big).
\end{align*}
Hence we have
\begin{align*}
&\frac{1}{\sqrt{\delta}} \bm{\gamma}\big( [T+\psi'_+(a_{\theta})-a_{\theta},\infty) \big) \\
&\le \frac{1-\theta}{\sqrt{\delta}} -\e^{-\psi'_+(a_{\theta})^2/2}
 \frac{(1-\theta) \delta^{-1/2} -1}{\e^{-a_{\theta}^2/2} +\sqrt{2\pi}\delta}
 +\frac{a_{\theta} -\psi'_+(a_{\theta})}{\sqrt{2\pi\delta}} \\
&= \frac{1-\theta}{\sqrt{\delta}} \bigg(
 1-\frac{\e^{-\psi'_+(a_{\theta})^2/2}}{\e^{-a_{\theta}^2/2} +\sqrt{2\pi}\delta} \bigg)
 +\frac{\e^{-\psi'_+(a_{\theta})^2/2}}{\e^{-a_{\theta}^2/2} +\sqrt{2\pi}\delta}
 +\frac{a_{\theta} -\psi'_+(a_{\theta})}{\sqrt{2\pi\delta}}.
\end{align*}
Then we deduce from \eqref{eq:psi'/e} and $|(\e^{-t^2/2})'| \le \e^{-1/2}<1$ for $t \in \R$ that,
for sufficiently small $\delta$ and $C'_2:=C_2(\theta) +1$,
\begin{align*}
&\frac{1}{\sqrt{\delta}} \bm{\gamma}\big( [T+\psi'_+(a_{\theta})-a_{\theta},\infty) \big) \\
&\le \frac{1-\theta}{\sqrt{\delta}}
 \frac{C'_2\delta +\sqrt{2\pi}\delta}{\e^{-a_{\theta}^2/2} +\sqrt{2\pi}\delta}
 +\frac{\e^{-a_{\theta}^2/2} +C'_2 \delta}{\e^{-a_{\theta}^2/2} +\sqrt{2\pi}\delta}
 +\frac{C'_2}{\sqrt{2\pi}} \sqrt{\delta} \\
&\le \frac{\e^{-a_{\theta}^2/2} +C_4(\theta) \sqrt{\delta}}{\e^{-a_{\theta}^2/2} +\sqrt{2\pi}\delta}.
\end{align*}
Therefore the same argument as \eqref{eq:e-to-0+} shows
\begin{equation}\label{eq:e-to-0+'}
\log\bigg( \frac{1}{\sqrt{\delta}} \bm{\gamma}\big( [T+\psi'_+(a_{\theta})-a_{\theta},\infty) \big) \bigg)
 \le C_4(\theta) \e^{a_{\theta}^2/2} \sqrt{\delta}.
\end{equation}
\end{step}%%%%%%%

\begin{step}[Estimates on $I_-$]%%%%%%%
For $x \in I_-$ we can apply similar calculations, however,
we need an additional care to replace $\psi'_-(a_{\theta})$ with $\psi'_+(a_{\theta})$.
We have for $x \in I_-$ ($x \le a_{\theta}$) the analogue to \eqref{eq:psi(x)},
\begin{align*}
\psi(x)
&\ge \psi(a_{\theta}) +\psi'_-(a_{\theta})(x-a_{\theta}) +\frac{(x-a_{\theta})^2}{2} \\
&\ge -\log(\e^{-\bm{\psi}_{\g}(a_{\theta})}+\delta)
 +\psi'_+(a_{\theta})(x-a_{\theta}) +\frac{(x-a_{\theta})^2}{2},
\end{align*}
by the $1$-convexity of $\psi$, \eqref{eq:deficit} and $\psi'_-(a_{\theta}) \le \psi'_+(a_{\theta})$.
This implies
\begin{equation}\label{eq:delta>-}
\rho(x) \ge \big( \psi'_+(a_{\theta}) -a_{\theta} \big) (x-a_{\theta})
 -\log(1+\e^{\bm{\psi}_{\g}(a_{\theta})} \delta)
\end{equation}
on $I_-$ in the same way as \eqref{eq:delta>+}.

In order to have an estimate from above, take $S<a_{\theta}$ such that
\begin{equation}\label{eq:Sdef}
( \e^{-\bm{\psi}_{\g}(a_{\theta})} +\delta)
 \int_S^{a_{\theta}} \exp\bigg( {-}\psi'_+(a_{\theta})(x-a_{\theta})
 -\frac{(x-a_{\theta})^2}{2} \bigg) \,dx =\theta-\sqrt{\delta}.
\end{equation}
Since
\[ ( \e^{-\bm{\psi}_{\g}(a_{\theta})} +\delta)
 \int_{I_-} \exp\bigg( {-}\psi'_+(a_{\theta})(x-a_{\theta})
 -\frac{(x-a_{\theta})^2}{2} \bigg) \,dx \ge \fm(I_-) =\theta \]
similarly to \eqref{eq:1-theta}, we indeed can find $S \in I_-$.
Notice also that $S \to -\infty$ as $\delta \to 0$ and,
similarly to \eqref{eq:gT} and \eqref{eq:eT},
\begin{equation}\label{eq:geS}
\bm{\gamma}\big( (-\infty,S] \big) \le \sqrt{\delta} +C_3(\theta) \delta, \qquad
 \frac{\e^{-S^2/2}}{1-S} \le \sqrt{2\pi\delta} +\sqrt{2\pi}C_3(\theta) \delta
\end{equation}
hold for sufficiently small $\delta$ (by replacing $C_3$ if necessary).

For $x \in I_- \cap (-\infty,S]$, we have
\begin{align*}
\psi(x) &\ge \psi(S) +\psi'_-(S) (x-S) +\frac{(x-S)^2}{2} \\
&\ge \bm{\psi}_{\g}(S) +\rho(S)
 +\big( \psi'_+(a_{\theta})+(S-a_{\theta}) \big) (x-S) +\frac{(x-S)^2}{2} \\
&= \bm{\psi}_{\g}(x) +\rho(S) +\big( \psi'_+(a_{\theta})-a_{\theta} \big)(x-S).
\end{align*}
By integration we deduce that
\[ \int_{I_- \cap (-\infty,S]} \e^{-\psi} \,dx
 \le \e^{-\rho(S)} \int_{I_- \cap (-\infty,S]}
 \exp\Big( {-}\big( \psi'_+(a_{\theta})-a_{\theta} \big) (x-S) \Big) \,\bm{\gamma}(dx). \]
We also observe
\begin{align*}
&\int_{-\infty}^S
 \exp\Big( {-}\big( \psi'_+(a_{\theta})-a_{\theta} \big)(x-S) \Big) \,\bm{\gamma}(dx) \\
&= \exp\bigg( \big( \psi'_+(a_{\theta})-a_{\theta} \big) S
 +\frac{(\psi'_+(a_{\theta})-a_{\theta})^2}{2} \bigg) \\
&\quad \times \frac{1}{\sqrt{2\pi}} \int_{-\infty}^S
 \exp\bigg( {-}\frac{(x+\psi'_+(a_{\theta})-a_{\theta})^2}{2} \bigg) \,dx \\
&= \exp\bigg( \big( \psi'_+(a_{\theta})-a_{\theta} \big) S
 +\frac{(\psi'_+(a_{\theta})-a_{\theta})^2}{2} \bigg)
 \bm{\gamma} \big( (-\infty,S+\psi'_+(a_{\theta})-a_{\theta}] \big).
\end{align*}
Combining this with
\begin{align*}
&\int_{I_- \cap (-\infty,S]} \e^{-\psi} \,dx
 =\theta -\int_S^{a_{\theta}} \e^{-\psi} \,dx \\
&\ge \theta -(\e^{-\bm{\psi}_{\g}(a_{\theta})} +\delta)
 \int_S^{a_{\theta}} \exp\bigg( {-}\psi'_+(a_{\theta})(x-a_{\theta}) -\frac{(x-a_{\theta})^2}{2} \bigg) \,dx \\
&= \sqrt{\delta},
\end{align*}
we obtain
\[ \rho(S)
 \le \big( \psi'_+(a_{\theta})-a_{\theta} \big) S +\frac{(\psi'_+(a_{\theta})-a_{\theta})^2}{2}
 +\log \bigg( \frac{1}{\sqrt{\delta}} \bm{\gamma}
 \big( (-\infty,S+\psi'_+(a_{\theta})-a_{\theta}] \big) \bigg). \]
Then, for $x \in [S,a_{\theta}]$, we have
\begin{align*}
\psi(x) &\le \psi(S) -\psi'_-(x)(S-x) -\frac{(S-x)^2}{2} \\
&\le \bm{\psi}_{\g}(S) +\rho(S) -\big( \psi'_+(a_{\theta})+(x-a_{\theta}) \big) (S-x) -\frac{(S-x)^2}{2} \\
&= \bm{\psi}_{\g}(x) +\rho(S) -\big( \psi'_+(a_{\theta})-a_{\theta} \big)(S-x)
\end{align*}
and hence
\begin{equation}\label{eq:delta<-}
\rho(x)
 \le \big( \psi'_+(a_{\theta})-a_{\theta} \big)x +\frac{(\psi'_+(a_{\theta})-a_{\theta})^2}{2}
 +\log \bigg( \frac{1}{\sqrt{\delta}} \bm{\gamma} \big( (-\infty,S+\psi'_+(a_{\theta})-a_{\theta}] \big) \bigg).
\end{equation}
We also observe, for sufficiently small $\delta$,
\begin{equation}\label{eq:e-to-0-}
 \log \bigg( \frac{1}{\sqrt{\delta}} \bm{\gamma}
 \big( (-\infty,S +\psi'_+(a_{\theta})-a_{\theta}] \big) \bigg)
 \le C_4(1-\theta) \e^{a_{\theta}^2/2} \sqrt{\delta}
\end{equation}
in the same way as \eqref{eq:e-to-0+} and \eqref{eq:e-to-0+'} by separately considering
the cases of $\psi'_+(a_{\theta}) \le a_{\theta}$ and $\psi'_+(a_{\theta})>a_{\theta}$.
\end{step}%%%%%%%

\begin{step}[Conclusion]%%%%%%%
Let us summarize the outcomes of our estimations to conclude the proof.
Recall $\rho=\psi-\bm{\psi}_{\g}$.
On one hand, we obtain from \eqref{eq:delta>+}, \eqref{eq:delta>-}
and $\log(1+t) \le t$ for $t \ge 0$ that
\[ \psi(x)-\bm{\psi}_{\g}(x)
 \ge \big( \psi'_+(a_{\theta})-a_{\theta} \big) (x-a_{\theta}) -\e^{\bm{\psi}_{\g}(a_{\theta})} \delta \]
on whole $I$, yielding \eqref{eq:delta>}.
On the other hand, combining \eqref{eq:delta<+} with \eqref{eq:psi'/e},
\eqref{eq:e-to-0+} and \eqref{eq:e-to-0+'} for $x \in I_+$,
and \eqref{eq:delta<-} with \eqref{eq:psi'/e} and \eqref{eq:e-to-0-} for $x \in I_-$,
we have
\[ \psi(x)-\bm{\psi}_{\g}(x)
 \le \big( \psi'_+(a_{\theta})-a_{\theta} \big) (x-a_{\theta}) +\omega(\theta) \sqrt{\delta} \]
for sufficiently small $\delta$ and all $x \in [S,T]$.
This is \eqref{eq:delta<} and completes the proof.
$\qedd$
\end{step}%%%%%%%
\end{proof}

The estimates \eqref{eq:delta>} and \eqref{eq:delta<} on the weight function
could be compared with \cite[Proposition~A.3]{CMM} which is, thanks to the finite-dimensionality,
in terms of the deficit in the diameter bound
(not directly of the deficit $\delta$ in the isoperimetric profile as above).

We do not know if the order of $\delta$ in Proposition~\ref{pr:psi} is optimal.
Improving the order in each estimate will improve the order of $\delta$ in Theorem~\ref{th:main}.

As a corollary to Proposition~\ref{pr:psi} together with \eqref{eq:psi'/e},
the unique minimizer of $\psi$ is close to that of $\bm{\psi}_{\g}$, namely $0$
(notice that $0 \in I$ indeed holds when $\delta$ is small enough
since $T \to \infty$ and $S \to -\infty$).
This observation is behind the validity of Proposition~\ref{pr:var_X}.

\section{Small deficit implies small symmetric difference}\label{sc:symm}%%%%%%%%
%%%%%%%%%%%%%%%%%%%%%%%%%

We continue the analysis on $1$-dimensional spaces with the help of Proposition~\ref{pr:psi},
and the next proposition corresponds to \cite[Proposition~3.1]{CMM} in our setting.
This may be regarded as a quantitative version of Lemma~\ref{lm:Bobkov}.

\begin{proposition}[Small symmetric difference]\label{pr:symm}
Let $I \subset \R$ be a closed interval equipped with a probability measure
$\fm=\e^{-\psi} \,dx$ such that $\psi$ is $1$-convex.
Fix $\theta \in (0,1)$ and assume that, for a Borel set $A \subset I$ with $\fm(A)=\theta$,
\[ \sP(A) \le \e^{-\bm{\psi}_{\g}(a_{\theta})} +\delta \]
holds for sufficiently small $\delta>0$ $($relative to $\theta)$.
Then we have
\begin{equation}\label{eq:symdef}
\min\Big\{ \fm\big( A \,\triangle\, (-\infty,r_{\fm}^-(\theta)] \big),
 \fm\big( A \,\triangle\, [r_{\fm}^+(\theta),\infty) \big) \Big\}
 \le \frac{\sP(A) -\cI_{(I,\fm)}(\theta)}{C_5(\theta,\delta)},
\end{equation}
where $r_{\fm}^-(\theta), r_{\fm}^+(\theta) \in I$ are defined by
\[ \fm \big( I \cap (-\infty,r_{\fm}^-(\theta)] \big) =\fm\big( I \cap [r_{\fm}^+(\theta),\infty) \big) =\theta, \]
and $\lim_{\delta \to 0}C_5(\theta,\delta) =\infty$.
\end{proposition}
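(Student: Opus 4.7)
The plan is to reduce the problem to a $1$-dimensional Gaussian comparison via Proposition~\ref{pr:psi}, and then exploit the strict concavity of $\cI_{(\R,\bm{\gamma})}$ to force $A$ to be close to one of the half-line minimizers. By Lemma~\ref{lm:Bobkov}, $\cI_{(I,\fm)}(\theta)$ is attained by one of $(-\infty,r_{\fm}^-(\theta)]$ or $[r_{\fm}^+(\theta),\infty)$; after reflecting and translating $I$ if needed, we may assume the former and $r_{\fm}^-(\theta)=a_\theta$, so that $\e^{-\psi(a_\theta)}=\cI_{(I,\fm)}(\theta)\le\sP(A)\le\e^{-\bm{\psi}_{\g}(a_\theta)}+\delta$. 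The hypotheses of Proposition~\ref{pr:psi} are then in force, giving close-to-Gaussian estimates on $\psi$ over a large interval $[S,T]$ with $S\to-\infty$, $T\to\infty$ as $\delta\to 0$.

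Approximating $A$ by a finite disjoint union of intervals (justified by lower semicontinuity of $\sP$), set $u:=\fm(A\cap(a_\theta,\infty))$. A crucial preliminary is that for $\delta$ sufficiently small, $a_\theta$ cannot lie in the essential interior of $A$: otherwise the component $[\alpha_i,\beta_i]\subset A$ containing $a_\theta$ would, because $\fm([S,T])=1-O(\sqrt{\delta})>\theta$, have at least one endpoint inside $[S,T]$, where Proposition~\ref{pr:psi} provides a positive lower bound on $\e^{-\psi}$, so that $\e^{-\psi(\alpha_i)}+\e^{-\psi(\beta_i)}$ exceeds $\cI_{(\R,\bm{\gamma})}(\theta)$ by a positive constant depending only on $\theta$, contradicting $\sP(A)\le\cI_{(\R,\bm{\gamma})}(\theta)+O(\delta)$. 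Hence the clean splitting identity
\[
\sP(A)=\sP\!\bigl(A\cap(-\infty,a_\theta]\bigr)+\sP\!\bigl(A\cap(a_\theta,\infty)\bigr)
\]
holds, and Theorem~\ref{th:isop} applied to subsets of $(I,|\cdot|,\fm)$ (which satisfies $\Ric_\infty\ge 1$) yields $\sP(B)\ge\cI_{(\R,\bm{\gamma})}(\fm(B))$ for any Borel $B\subset I$; thus
\[
\sP(A)\ge\cI_{(\R,\bm{\gamma})}(\theta-u)+\cI_{(\R,\bm{\gamma})}(u).
\]

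Setting $h(u):=\cI_{(\R,\bm{\gamma})}(\theta-u)+\cI_{(\R,\bm{\gamma})}(u)-\cI_{(\R,\bm{\gamma})}(\theta)$, Lemma~\ref{lm:conc} gives strict concavity with $h(0)=0$, and the Mill's-ratio asymptotics $\cI_{(\R,\bm{\gamma})}(s)\sim s\sqrt{2\log(1/s)}$ as $s\to 0^+$ yield $h'(0^+)=+\infty$. For $\theta>1/2$, the range $u\in[0,1-\theta]$ contains only the zero $u=0$ of $h$ (note $h(1-\theta)=\cI_{(\R,\bm{\gamma})}(2\theta-1)>0$), so $h(u)\le\sP(A)-\cI_{(\R,\bm{\gamma})}(\theta)\le\delta+O(\delta)$ forces $u\le C(\theta)(\sP(A)-\cI_{(I,\fm)}(\theta))/\sqrt{\log(1/\delta)}$, giving $\fm(A\triangle(-\infty,r_{\fm}^-(\theta)])=2u$ bounded as required with $C_6(\theta,\delta)$ of order $\sqrt{\log(1/\delta)}$. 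For $\theta<1/2$, either $u$ is near $0$ (same conclusion as above) or $u$ is near the other zero $u=\theta$; in the latter subcase we apply the same framework to the complement $A^c$ of mass $1-\theta>1/2$, reducing to the previous regime up to the $O(\delta)$ gap $|\cI_{(I,\fm)}(\theta)-\cI_{(I,\fm)}(1-\theta)|$ controlled by Proposition~\ref{pr:psi}, to conclude that $A$ is close to $[r_{\fm}^+(\theta),\infty)$; this accounts for the ``$\min$'' in the conclusion. The main obstacle is justifying the clean perimeter split at $a_\theta$, which is handled by the deficit-exclusion argument above relying crucially on the lower bound for $\e^{-\psi}$ from Proposition~\ref{pr:psi}.
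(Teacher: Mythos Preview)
Your argument has a genuine gap at the ``clean perimeter split'' step. You claim that for small $\delta$ the point $a_\theta$ cannot lie in the essential interior of $A$, arguing that a component $[\alpha_i,\beta_i]\ni a_\theta$ must have an endpoint in $[S,T]$ where Proposition~\ref{pr:psi} supplies a positive lower bound on $\e^{-\psi}$, forcing $\e^{-\psi(\alpha_i)}+\e^{-\psi(\beta_i)}$ to exceed $\cI_{(\R,\bm{\gamma})}(\theta)$ by a fixed amount. But Proposition~\ref{pr:psi} gives no \emph{uniform} positive lower bound on $\e^{-\psi}$ over $[S,T]$: the bound there is essentially $\e^{-\bm{\psi}_{\g}(x)}$ up to small error, which may be as small as $\e^{-\bm{\psi}_{\g}(a_\theta)}$ (near $\pm a_\theta$) or far smaller. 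A concrete obstruction in the Gaussian model $\fm=\bm{\gamma}$: take $A=[\alpha,a_\theta+\epsilon]$ with $\epsilon>0$ small and $\alpha$ chosen so that $\bm{\gamma}(A)=\theta$. Then $a_\theta$ lies in the interior of $A$, yet $\sP(A)=\e^{-\bm{\psi}_{\g}(\alpha)}+\e^{-\bm{\psi}_{\g}(a_\theta+\epsilon)}\to\cI_{(\R,\bm{\gamma})}(\theta)$ as $\epsilon\to0$, so the deficit is as small as desired while $a_\theta$ remains interior. Consequently the identity $\sP(A)=\sP(A\cap(-\infty,a_\theta])+\sP(A\cap(a_\theta,\infty))$ fails (the right side exceeds the left by exactly $2\e^{-\psi(a_\theta)}\approx 2\cI_{(\R,\bm{\gamma})}(\theta)$), and your key inequality $\sP(A)\ge\cI_{(\R,\bm{\gamma})}(\theta-u)+\cI_{(\R,\bm{\gamma})}(u)$ is violated: for $\theta>1/2$, a first-order expansion in $u$ gives $\sP(A)-\cI_{(\R,\bm{\gamma})}(\theta)\approx\cI_{(\R,\bm{\gamma})}(u)-a_\theta u$ while $h(u)\approx\cI_{(\R,\bm{\gamma})}(u)+a_\theta u$, so the claimed inequality reduces to $-a_\theta u\ge a_\theta u$, false.

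The paper avoids this difficulty by a different route. Rather than splitting $A$ at $a_\theta$, it first uses Proposition~\ref{pr:psi} to localize \emph{all} boundary points of $A$: any $x\in\partial A\cap[S,T]$ must satisfy $\bm{\psi}_{\g}(x)\ge\bm{\psi}_{\g}(a_\theta)-o(1)$, so $\partial A$ is concentrated near $\pm a_\theta$ together with a far tail. It then slides the far components to reduce $A$ (without increasing $\sP$ or changing the relevant symmetric difference) to a canonical form $(-\infty,\alpha)\cup(\beta,r_{\fm}^-(\theta)+\xi)\cup(\zeta,\infty)$, and finishes by a direct tail estimate of the type $\fm((-\infty,\beta))/\e^{-\psi(\beta)}\to0$. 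Your concavity/$h(u)$ idea does not obviously survive the failure of the clean split; if you want to rescue it you would need to split at a point \emph{not} in $A$ (whose existence near $a_\theta$ is exactly what is at issue) or to control the excess $2\e^{-\psi(r)}$ created by the cut, which is of the same order as the main term and cannot be absorbed.
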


\begin{proof}
By reversing and translating $I$ if necessary,
we assume \eqref{eq:a_theta} and $\cI_{(I,\fm)}(\theta)=\e^{-\psi(a_\theta)}$ without loss of generality.
Hence $r_{\fm}^-(\theta) =a_{\theta}$ and the estimates in Proposition~\ref{pr:psi} are available.
Moreover, by \eqref{eq:delta>} and \eqref{eq:delta<},
$r_{\fm}^+(\theta)$ converges to $-a_{\theta}$ as $\delta \to 0$.

Our goal is to show that $A$ is necessarily close to either $I \cap (-\infty,r_{\fm}^-(\theta)]$
or $I \cap [r_{\fm}^+(\theta),\infty)$.
Since $\sP(A)<\infty$, without loss of generality, let $A$ be the union of open intervals
(see, e.g., \cite[Proposition~12.13]{Mag}).
If there is $x \in \del A \cap [S,T]$, then the hypothesis $\sP(A) \le \e^{-\bm{\psi}_{\g}(a_{\theta})} +\delta$
and \eqref{eq:delta<} yield that
\[ \e^{-\bm{\psi}_{\g}(a_{\theta})} +\delta \ge \e^{-\psi(x)}
 \ge \exp\Big( {-}\bm{\psi}_{\g}(x) -\big( \psi'_+(a_{\theta})-a_{\theta} \big)(x-a_{\theta})
 -\omega(\theta) \sqrt{\delta} \Big). \]
Together with \eqref{eq:psi'/e}, we obtain
\[ \bm{\psi}_{\g}(x) \ge \bm{\psi}_{\g}(a_{\theta}) -c(\theta,\delta) \]
with $\lim_{\delta \to 0}c(\theta,\delta)=0$.
On one hand, this implies that $\del A$ cannot appear between
$-|a_{\theta}|+\ve$ and $|a_{\theta}|-\ve$ for some $\ve=\ve(\theta,\delta)>0$
(provided that $a_{\theta} \neq 0$).
On the other hand, if every $x \in \del A$ is far from $\pm a_{\theta}$
(say, $\e^{-\bm{\psi}_{\g}(x)} < \e^{-\bm{\psi}_{\g}(a_{\theta})}/2$),
then $\fm(A)$ is too large (when $A \supset (-|a_{\theta}|,|a_{\theta}|)$)
or too small (when $A \cap (-|a_{\theta}|,|a_{\theta}|) =\emptyset$).
This latter argument is valid also for $a_{\theta}=0$.
Therefore $\del A$ appears exactly once near either $a_{\theta}$ or $-a_{\theta}$,
and all the other points of $\del A$ are far from $\pm a_{\theta}$.

Since the proofs are common, we will assume that $\del A$ appears near $a_{\theta}$
(as the right end of a component) in the sequel.
Concerning a connected component of $A$ whose boundary points are far from $\pm a_{\theta}$,
we can slide it (in $I$) in the direction opposite to  $a_{\theta}$,
with keeping the total mass and hence the symmetric difference with $(-\infty,r^-_{\fm}(\theta)]$,
and decreasing the perimeter.
We eventually modify $A$ into
\[ \big\{ (-\infty,\alpha) \cup \big( \beta,r_{\fm}^-(\theta)+\xi \big) \cup (\zeta,\infty) \big\} \cap I \]
that we again call $A$,
where $\alpha<\beta \ll r_{\fm}^{\pm}(\theta)$, $\xi \in \R$ and $r_{\fm}^{\pm}(\theta) \ll \zeta$.
We regard as $\zeta=\infty$ if $A$ does not include the interval $(\zeta,\infty)$,
and similarly $\alpha=-\infty$ if $(-\infty,\alpha)$ does not exist.
As $\delta \to 0$, we observe from the above discussion (by virtue of Proposition~\ref{pr:psi})
that $\xi \to 0$, $\alpha \to -\infty$, $\beta \to -\infty$ and $\zeta \to \infty$.

\begin{case}
We first assume $\xi \ge 0$.
\end{case}

For simplicity, we regard $\fm$ as a measure on $\R$ in this proof,
namely $\fm((\alpha,\beta))$ will mean $\fm((\alpha,\beta) \cap I)$.
If $\beta \le \inf I$, then by $\fm(A)=\theta$ we have $A=(-\infty,r_{\fm}^-(\theta)) \cap I$
and there is nothing to prove.
Hence we assume $\inf I<\beta$.
Since $\fm(A)=\theta=\fm((-\infty,r_{\fm}^-(\theta)])$, we find
\[ \fm\big( (r_{\fm}^-(\theta), r_{\fm}^-(\theta)+\xi) \cup (\zeta,\infty) \big)
 =\fm\big( (\alpha,\beta) \big). \]
Thus the symmetric difference between $A$ and $(-\infty,r_{\fm}^-(\theta)]$ satisfies
\begin{equation}\label{eq:AvsR}
\fm\big( A \,\triangle\, (-\infty,r_{\fm}^-(\theta)] \big) =2\fm\big( (\alpha,\beta) \big)
 \le 2\fm\big( (-\infty,\beta) \big).
\end{equation}
It follows from the $1$-convexity of $\psi$ that, for $x<\beta$,
\begin{align*}
\psi(x)
&\ge \psi(\beta) +\psi'_-(\beta)(x-\beta) +\frac{(x-\beta)^2}{2} \\
&\ge \psi(\beta) +\big( \psi'_-(a_{\theta}) -(a_{\theta}-\beta) \big) (x-\beta) +\frac{(x-\beta)^2}{2}.
\end{align*}
Put $\bar{\beta}:=\beta+\psi'_-(a_{\theta})-a_{\theta}$ for brevity.
Then we observe
\begin{align*}
\fm\big( (-\infty,\beta) \big)
&\le \e^{-\psi(\beta)}
 \int_{-\infty}^{\beta} \exp\bigg( {-}\frac{(x-\beta)^2}{2} -\bar{\beta}(x-\beta) \bigg) \,dx \\
&= \e^{-\psi(\beta)} \int_{-\infty}^0 \exp\bigg( {-}\frac{x^2}{2} -\bar{\beta}x \bigg) \,dx,
\end{align*}
and
\[ \int_{-\infty}^0 \exp\bigg( {-}\frac{x^2}{2} -\bar{\beta}x \bigg) \,dx
 =\int_{-\infty}^0 \exp\bigg( {-}\frac{(x+\bar{\beta})^2}{2} +\frac{\bar{\beta}^2}{2} \bigg) \,dx
 =\e^{\bar{\beta}^2/2} \int_{-\infty}^{\bar{\beta}} \e^{-x^2/2} \,dx. \]
Hence
\begin{equation}\label{eq:lHop}
\frac{\fm((-\infty,\beta))}{\e^{-\psi(\beta)}}
 \le \e^{\bar{\beta}^2/2} \int_{-\infty}^{\bar{\beta}}\e^{-x^2/2} \,dx
 \to 0
\end{equation}
as $\delta \to 0$, because by l'H\^{o}pital's rule
\[  \lim_{b \to -\infty} \frac{\int_{-\infty}^b \e^{-x^2/2} \,dx}{\e^{-b^2/2}}
 =\lim_{b \to -\infty} \frac{1}{-b} =0. \]

Now, if $\psi(r_{\fm}^-(\theta)) \ge \psi(r_{\fm}^-(\theta)+\xi)$,
then we immediately obtain
\[ \sP(A) \ge \e^{-\psi(\beta)} +\e^{-\psi(r_{\fm}^-(\theta)+\xi)}
 \ge \e^{-\psi(\beta)} +\e^{-\psi(r_{\fm}^-(\theta))}. \]
Therefore \eqref{eq:AvsR} and \eqref{eq:lHop} show
\[ \frac{\sP(A) -\cI_{(I,\fm)}(\theta)}{\fm(A \,\triangle\, (-\infty,r_{\fm}^-(\theta)])}
 \ge \frac{\e^{-\psi(\beta)}}{2\fm((-\infty,\beta))} \to \infty \]
as $\delta \to 0$.
In the other case of $\psi(r_{\fm}^-(\theta)) <\psi(r_{\fm}^-(\theta)+\xi)$,
let us consider
\[ A':= \big\{ (-\infty,r_{\fm}^-(\theta)+\xi) \cup (\zeta,\infty) \big\} \cap I \]
and put
\[ \theta' :=\fm(A') =\theta +\fm\big( (\alpha,\beta) \big). \]
Note that $r_{\fm}^-(\theta)+\xi \le r_{\fm}^-(\theta')$ and hence
$\psi(r_{\fm}^-(\theta)+\xi) \le \psi(r_{\fm}^-(\theta'))$ by the convexity of $\psi$.
Thus we have
\[ \sP(A) -\cI_{(I,\fm)}(\theta)
 \ge \e^{-\psi(\beta)} +\e^{-\psi(r_{\fm}^-(\theta)+\xi)} -\e^{-\psi(r_{\fm}^-(\theta))}
 \ge \e^{-\psi(\beta)} +\e^{-\psi(r_{\fm}^-(\theta'))} -\e^{-\psi(r_{\fm}^-(\theta))}. \]
Since $[r_{\fm}^-]'(\theta) =\e^{\psi(r_{\fm}^-(\theta))}$ by the definition of $r_{\fm}^-$
(similarly to \eqref{eq:a'}) and $\psi$ is convex, we deduce that
\[ \e^{-\psi(r_{\fm}^-(\theta))} -\e^{-\psi(r_{\fm}^-(\theta'))}
 \le \psi'_- \big( r_{\fm}^-(\theta') \big) (\theta'-\theta)
 \le \psi'_- \big( r_{\fm}^-(\theta') \big) \fm\big( (-\infty,\beta) \big). \]
Therefore
\[ \sP(A) -\cI_{(I,\fm)}(\theta)
 \ge \e^{-\psi(\beta)} -\psi'_- \big( r_{\fm}^-(\theta') \big) \fm\big( (-\infty,\beta) \big), \]
where, thanks to \eqref{eq:lHop}, the RHS is positive if $\delta$ is sufficiently small.
Combining this with \eqref{eq:AvsR}, we conclude
\[ \frac{\sP(A) -\cI_{(I,\fm)}(\theta)}{\fm(A \,\triangle\, (-\infty,r_{\fm}^-(\theta)])}
 \ge \frac{\e^{-\psi(\beta)}}{2\fm((-\infty,\beta))} -\frac{\psi'_- (r_{\fm}^-(\theta'))}{2}
 \to \infty \]
as $\delta \to 0$.

\begin{case}
Next we assume $\xi<0$.
\end{case}

In this case we can discuss similarly by reversing $A$.
Since $\fm((-\infty,r_{\fm}^-(\theta)+\xi))<\theta$, it necessarily holds $\zeta<\infty$.
Then we have
\[ \fm\big( A \,\triangle\, (-\infty,r_{\fm}^-(\theta)] \big) =2\fm\big( (\zeta,\infty) \big) \]
instead of \eqref{eq:AvsR}, and
\begin{equation}\label{eq:lHop'}
\frac{\fm((\zeta,\infty))}{\e^{-\psi(\zeta)}} \to 0
\end{equation}
as $\delta \to 0$ similarly to \eqref{eq:lHop}.
This is enough to conclude if $\psi(r_{\fm}^-(\theta)) \ge \psi(r_{\fm}^-(\theta)+\xi)$, since
\[ \frac{\sP(A) -\cI_{(I,\fm)}(\theta)}{\fm(A \,\triangle\, (-\infty,r_{\fm}^-(\theta)])}
 \ge \frac{\e^{-\psi(\zeta)} +\e^{-\psi(r_{\fm}^-(\theta)+\xi)} -\e^{-\psi(r_{\fm}^-(\theta))}}{2\fm((\zeta,\infty))}
 \ge \frac{\e^{-\psi(\zeta)}}{2\fm((\zeta,\infty))} \to \infty \]
as $\delta \to 0$.
In the case of $\psi(r_{\fm}^-(\theta))<\psi(r_{\fm}^-(\theta)+\xi)$, we consider
\[ A' :=\big\{ (-\infty,\alpha) \cup \big( \beta,r_{\fm}^-(\theta)+\xi \big) \big\} \cap I,
 \qquad \theta' :=\fm(A') =\theta -\fm\big( (\zeta,\infty) \big). \]
Then $r_{\fm}^-(\theta') \le r_{\fm}^-(\theta)+\xi$ and
$\psi(r_{\fm}^-(\theta')) \ge \psi(r_{\fm}^-(\theta)+\xi)$ by the convexity of $\psi$, therefore
\[ \sP(A) -\cI_{(I,\fm)}(\theta)
 \ge \e^{-\psi(\zeta)} +\e^{-\psi(r_{\fm}^-(\theta'))} -\e^{-\psi(r_{\fm}^-(\theta))}
 \ge \e^{-\psi(\zeta)} +\psi'_+ \big( r_{\fm}^-(\theta') \big) \fm\big( (\zeta,\infty) \big). \]
Finally \eqref{eq:lHop'} implies
\[ \frac{\sP(A) -\cI_{(I,\fm)}(\theta)}{\fm(A \,\triangle\, (-\infty,r_{\fm}^-(\theta)])}
 \ge \frac{\e^{-\psi(\zeta)}}{2\fm((\zeta,\infty))} +\frac{\psi'_+ (r_{\fm}^-(\theta'))}{2}
 \to \infty \]
as $\delta \to 0$.

Therefore we conclude, for sufficiently small $\delta$,
\[ \sP(A) -\cI_{(I,\fm)}(\theta)
 \ge C(\theta,\delta) \fm\big( A \,\triangle\, (-\infty,r_{\fm}^-(\theta)] \big) \]
and $\lim_{\delta \to 0}C(\theta,\delta) =\infty$.
When $\del A$ appears near $-a_{\theta}=a_{1-\theta}$,
we similarly obtain
\[ \sP(A) -\cI_{(I,\fm)}(\theta)
 \ge C(\theta,\delta) \fm\big( A \,\triangle\, [r_{\fm}^+(\theta),\infty) \big) \]
(by using $\e^{-\psi(r_{\fm}^+(\theta))} \ge \cI_{(I,\fm)}(\theta)$).
$\qedd$
\end{proof}

\section{Reverse Poincar\'e inequality on needles}\label{sc:revP}%%%%%%%%%%%
%%%%%%%%%%%%%%%%%%%%%%

In this section,
we analyze the spectral gap of a $1$-dimensional space with a small isoperimetric deficit.
We shall see that affine functions achieve the sharp spectral gap asymptotically
as the deficit goes to $0$.
Precisely, we show the following reverse form of the Poincar\'e inequality, where
\[ \Var_{(I,\fm)}(u):=\int_I u^2 \,d\fm -\bigg( \int_I u \,d\fm \bigg)^2 \]
is the \emph{variance} of $u$ (recall \eqref{eq:Poin}).

\begin{proposition}[Reverse Poincar\'e inequality on needles]\label{pr:revP}
Let $I \subset \R$ be a closed interval equipped with a probability measure
$\fm=\e^{-\psi} \,dx$ such that $\psi$ is $1$-convex.
Fix $\theta \in (0,1)$ and assume \eqref{eq:a_theta} and
$\e^{-\psi(a_{\theta})} \le \e^{-\bm{\psi}_{\g}(a_{\theta})} +\delta$.
Then, given $\ve \in (0,1)$, if $\delta>0$ is sufficiently small $($relative to $\theta$ and $\ve)$,
we have
\begin{equation}\label{eq:revP}
\Var_{(I,\fm)}(u) \ge \frac{1}{\Lambda(\theta,\ve,\delta)} \int_I |u'|^2 \,d\fm
\end{equation}
for every affine function $u(x)=ax+b$ with $a,b \in \R$,
where $\Lambda(\theta,\ve,\delta) \le (1-C_6(\theta,\ve)\delta^{(1-\ve)/2})^{-1}$
and, in particular, $\lim_{\delta \to 0}\Lambda(\theta,\ve,\delta)=1$.
\end{proposition}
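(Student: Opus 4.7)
Since both sides of (5.1) are translation invariant in $b$ and homogeneous of degree two in $a$, I would first reduce to the case $u(x)=x$, so that $\int_{I}|u'|^{2}\,d\fm=1$ and the inequality becomes the single scalar statement
\[
\Var_{(I,\fm)}(\mathrm{id}) \ge \frac{1}{\Lambda(\theta,\ve,\delta)},
\]
which, in view of the prescribed form of $\Lambda$, will follow once I show
\[
\Var_{(I,\fm)}(\mathrm{id}) \ge 1 - C_{7}(\theta,\ve)\delta^{(1-\ve)/2}.
\]
I would then use the symmetric identity
\[
\Var_{(I,\fm)}(\mathrm{id}) = \frac{1}{2}\int_{I}\int_{I}(x-y)^{2}\,\fm(dx)\fm(dy),
\]
so that I only need a \emph{lower} bound on $e^{-\psi}$; this is exactly what Proposition~3.1 supplies on $[S,T]$.

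Write $\alpha := \psi'_{+}(a_{\theta}) - a_{\theta}$, which by (3.12) satisfies $|\alpha| \le C_{3}(\theta)\delta$ for small $\delta$. Restricting the double integral to $[S,T]^{2}\cap I^{2}$ and applying the upper bound \eqref{eq:delta<} of Proposition~3.1 yields
\[
e^{-\psi(x)} \ge e^{-\omega(\theta)\sqrt{\delta}}\, e^{-\bm{\psi}_{\g}(x)-\alpha(x-a_{\theta})}
= e^{-\omega\sqrt{\delta}}\,\frac{e^{\alpha a_{\theta}+\alpha^{2}/2}}{\sqrt{2\pi}}\,e^{-(x+\alpha)^{2}/2}
\]
on $[S,T]$, which identifies the comparison measure with the shifted Gaussian of mean $-\alpha$ and variance $1$. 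After the change of variables $y=x+\alpha$ (and the analogous one in the second variable) I obtain
\[
\Var_{(I,\fm)}(\mathrm{id}) \ge \frac{1}{2}\, e^{-2\omega\sqrt{\delta}}\, e^{2\alpha a_{\theta}+\alpha^{2}}
\int_{[S+\alpha,T+\alpha]^{2}}(y_{1}-y_{2})^{2}\,\bm{\gamma}(dy_{1})\bm{\gamma}(dy_{2}).
\]

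The next step is the tail estimate. On all of $\R^{2}$ the Gaussian double integral equals $2\,\Var_{\bm{\gamma}}(\mathrm{id})=2$, so the defect is bounded by $2\int_{\R\setminus[S+\alpha,T+\alpha]}(x^{2}+1)\,\bm{\gamma}(dx)$. Integration by parts gives $\int_{T'}^{\infty}x^{2}e^{-x^{2}/2}\,dx = T'e^{-T'^{2}/2}+\int_{T'}^{\infty}e^{-x^{2}/2}\,dx$, which combined with the sharp tail bounds \eqref{eq:gT}, \eqref{eq:eT} and their $S$-analogues \eqref{eq:geS} yields a bound of the form $C(\theta)(T+1)^{2}\sqrt{\delta}$. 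Since \eqref{eq:eT} also forces $T^{2}\lesssim -\log\delta$ (and likewise $S^{2}\lesssim -\log\delta$), we obtain
\[
(T+1)^{2}\sqrt{\delta} \;\le\; C_{\ve}\,\delta^{(1-\ve)/2}
\]
for every $\ve\in(0,1)$ provided $\delta$ is small enough in terms of $\theta$ and $\ve$. This is where the parameter $\ve$ enters: it absorbs the logarithmic factor coming from the quadratic weight $(x-y)^{2}$ on the Gaussian tails.

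Putting the pieces together, since $|\alpha|\le C_{3}(\theta)\delta$ gives $e^{2\alpha a_{\theta}+\alpha^{2}}\ge 1-C(\theta)\delta$ and $e^{-2\omega\sqrt{\delta}}\ge 1-2\omega\sqrt{\delta}$, and since $\sqrt{\delta}\le \delta^{(1-\ve)/2}$ for $\delta\le 1$, multiplying the three factors produces
\[
\Var_{(I,\fm)}(\mathrm{id}) \;\ge\; 1 - C_{7}(\theta,\ve)\,\delta^{(1-\ve)/2},
\]
which is (5.1). The main obstacle is the tail step: controlling the quadratic moment on $\R\setminus[S+\alpha,T+\alpha]$ is exactly what dictates the rate $\delta^{(1-\ve)/2}$, and one must keep track of both the constant prefactors (through $\alpha$, which is only $O(\delta)$) and the logarithmic enlargement of $T$ simultaneously; everything else is an application of Proposition~3.1 together with Fubini and a Gaussian change of variables.
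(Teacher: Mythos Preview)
Your overall strategy is sound and in fact differs from the paper's in a pleasant way. The paper chooses $u(x)=x+\psi'_+(a_\theta)-a_\theta$ and separately estimates $\int_I |u'|^2\,d\fm$ (via \eqref{eq:delta>}), $\int_I u^2\,d\fm$ (via \eqref{eq:delta<}), and $\bigl|\int_I u\,d\fm\bigr|$ (via both \eqref{eq:delta>} and \eqref{eq:delta<}), then assembles the variance from these three pieces. Your double-integral identity $\Var(\mathrm{id})=\tfrac12\iint (x-y)^2\,\fm(dx)\fm(dy)$ is cleaner: it needs only the \emph{lower} density bound \eqref{eq:delta<} on $[S,T]$, avoids any first-moment estimate, and makes $\int_I|u'|^2\,d\fm=1$ exact rather than something to be bounded.

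There is, however, a genuine gap in your tail step. You assert that \eqref{eq:eT} ``forces $T^2\lesssim -\log\delta$''; it does not. The inequality $e^{-T^2/2}/(T+1)\le C\sqrt{\delta}$ says that a \emph{decreasing} function of $T$ is small, so it yields a \emph{lower} bound $T^2\gtrsim -\log\delta$, not an upper one. From \eqref{eq:gT} and \eqref{eq:eT} alone, $T$ could in principle be arbitrarily large, and then your intermediate bound $Te^{-T^2/2}\le (T+1)^2\cdot C\sqrt{\delta}$ is useless. The repair is exactly the interpolation the paper uses: write
\[
Te^{-T^2/2}=\bigl[T(T+1)^{1-\ve}e^{-\ve T^2/2}\bigr]\cdot\bigl[e^{-T^2/2}/(T+1)\bigr]^{1-\ve},
\]
observe that the first bracket is bounded uniformly over $T>0$ (the Gaussian factor $e^{-\ve T^2/2}$ kills the polynomial), and apply \eqref{eq:eT} to the second bracket to get $Te^{-T^2/2}\le C(\ve)\delta^{(1-\ve)/2}$ directly, with no need for an upper bound on $T$. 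Alternatively you could return to the defining equation \eqref{eq:Tdef} and prove the matching lower bound $\bm{\gamma}([T,\infty))\ge c(\theta)\sqrt{\delta}$, which does give $T^2\lesssim -\log\delta$; but that is additional work not contained in Proposition~\ref{pr:psi}, and it is not what \eqref{eq:eT} says. With this one fix your argument goes through.
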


Precisely, the assumption is read as $\cI_{(I,\fm)}(\theta) \le \cI_{(\R,\bm{\gamma})}(\theta)+\delta$
(up to reversing and translating $I$).
Recall that the $1$-convexity of $\psi$ ($\Ric_{\infty} \ge 1$ or $\CD(1,\infty)$)
implies the Poincar\'e inequality
\[ \Var_{(I,\fm)}(u) \le \int_I |u'|^2 \,d\fm. \]
Hence $\Lambda(\theta,\ve,\delta) \ge 1$ necessarily holds.
Note also that we obtain from \eqref{eq:revP} an upper bound of the first nonzero eigenvalue $\lambda_1$
of $-\Delta_{\fm}$ (recall Subsection~\ref{ssc:wRic}):
\[ 1 \le \lambda_1 \le \Lambda(\theta,\ve,\delta). \]

\begin{proof}
We remark that the inequality \eqref{eq:revP} is invariant under affine transformations of $u$,
thereby it suffices to show \eqref{eq:revP} for some $a,b$ with $a \neq 0$.
Thus, let $u(x) =x +\psi'_+(a_{\theta}) -a_{\theta}$ without loss of generality.
First, it clearly holds
\begin{equation}\label{eq:e^p}
\int_I |u'|^2 \,d\fm =\int_I \e^{-\psi} \,dx =1.
\end{equation}

Second, we deduce from \eqref{eq:delta<} and
\begin{equation}\label{eq:delta+}
\frac{x^2}{2} +\big( \psi'_+(a_{\theta})-a_{\theta} \big) (x-a_{\theta})
 =\frac{(x +\psi'_+(a_{\theta})-a_{\theta})^2}{2}
 -\frac{\psi'_+(a_{\theta})^2-a_{\theta}^2}{2}
\end{equation}
that
\[ \int_I u^2 \,d\fm
 \ge \frac{1}{\sqrt{2\pi}} \exp\bigg( \frac{\psi'_+(a_{\theta})^2-a_{\theta}^2}{2} -\omega\sqrt{\delta} \bigg)
 \int_S^T u(x)^2 \exp\bigg( {-}\frac{(x+\psi'_+(a_{\theta})-a_{\theta})^2}{2} \bigg) \,dx. \]
Note that
\[ \exp\bigg( \frac{\psi'_+(a_{\theta})^2-a_{\theta}^2}{2} -\omega\sqrt{\delta} \bigg)
 \ge 1-C(\theta) \sqrt{\delta} \]
by \eqref{eq:psi'/e}, and
\begin{align*}
&\int_S^T u(x)^2 \exp\bigg( {-}\frac{(x+\psi'_+(a_{\theta})-a_{\theta})^2}{2} \bigg) \,dx \\
&= \bigg[ {-}\big( x+\psi'_+(a_{\theta})-a_{\theta} \big)
 \exp\bigg( {-}\frac{(x+\psi'_+(a_{\theta})-a_{\theta})^2}{2} \bigg) \bigg]_S^T \\
&\quad +\int_S^T
 \exp\bigg( {-}\frac{( x+\psi'_+(a_{\theta})-a_{\theta})^2}{2} \bigg) \,dx \\
&= -\bigg[ \big( x+\psi'_+(a_{\theta})-a_{\theta} \big)
 \exp\bigg( {-}\frac{(x+\psi'_+(a_{\theta})-a_{\theta})^2}{2} \bigg) \bigg]_S^T \\
&\quad +\e^{-\psi'_+(a_{\theta})^2/2} \int_S^T
 \exp\bigg( {-}\psi'_+(a_{\theta})(x-a_{\theta}) -\frac{(x-a_{\theta})^2}{2} \bigg) \,dx.
\end{align*}
In the former term, we observe from $|(t \e^{-t^2/2})'| \le 1$, \eqref{eq:psi'/e} and \eqref{eq:eT} that
\begin{align*}
&\big( T+\psi'_+(a_{\theta})-a_{\theta} \big)
 \exp\bigg( {-}\frac{(T+\psi'_+(a_{\theta})-a_{\theta})^2}{2} \bigg)
 \le T \e^{-T^2/2} +|\psi'_+(a_{\theta})-a_{\theta}| \\
&\le T(T+1)^{1-\ve} \e^{-\ve T^2/2} \bigg( \frac{\e^{-T^2/2}}{T+1} \bigg)^{1-\ve} +(C_2 +1)\delta
 \le C(\theta,\ve) \delta^{(1-\ve)/2}.
\end{align*}
We similarly obtain from \eqref{eq:geS} that
\[ \big( S+\psi'_+(a_{\theta})-a_{\theta} \big)
 \exp\bigg( {-}\frac{(S+\psi'_+(a_{\theta})-a_{\theta})^2}{2} \bigg)
 \ge -C(\theta,\ve) \delta^{(1-\ve)/2}. \]
Thanks to \eqref{eq:Tdef} and \eqref{eq:Sdef}, the latter term coincides with
\[ \e^{-\psi'_+(a_{\theta})^2/2} \frac{1-2\sqrt{\delta}}{\e^{-\bm{\psi}_{\g}(a_{\theta})}+\delta}
 =\e^{(a_{\theta}^2 -\psi'_+(a_{\theta})^2)/2}
 \frac{\sqrt{2\pi}(1-2\sqrt{\delta})}{1+\sqrt{2\pi} \e^{a_{\theta}^2/2} \delta}
 \ge \sqrt{2\pi} -C(\theta)\sqrt{\delta}. \]
Hence we have
\begin{equation}\label{eq:x^2e^p}
\int_I u^2 \,d\fm \ge 1-C(\theta,\ve) \delta^{(1-\ve)/2}.
\end{equation}

Finally, by \eqref{eq:delta>}, \eqref{eq:delta<} and \eqref{eq:delta+},
\begin{align*}
&\sqrt{2\pi} \int_I u \,d\fm \\
&\le \exp\bigg( \frac{\psi'_+(a_{\theta})^2-a_{\theta}^2}{2} +\omega\delta \bigg)
 \int_{a_{\theta}-\psi'_+(a_{\theta})}^{\infty} u(x)
 \exp\bigg( {-}\frac{(x+\psi'_+(a_{\theta})-a_{\theta})^2}{2} \bigg) \,dx \\
&\quad +\exp\bigg( \frac{\psi'_+(a_{\theta})^2-a_{\theta}^2}{2} -\omega\sqrt{\delta} \bigg)
 \int_S^{a_{\theta}-\psi'_+(a_{\theta})} u(x)
 \exp\bigg( {-}\frac{(x+\psi'_+(a_{\theta})-a_{\theta})^2}{2} \bigg) \,dx \\
&= -\exp\bigg( \frac{\psi'_+(a_{\theta})^2-a_{\theta}^2}{2} +\omega\delta \bigg)
 \bigg[ \exp\bigg( {-}\frac{(x+\psi'_+(a_{\theta})-a_{\theta})^2}{2} \bigg)
 \bigg]_{a_{\theta}-\psi'_+(a_{\theta})}^{\infty} \\
&\quad -\exp\bigg( \frac{\psi'_+(a_{\theta})^2-a_{\theta}^2}{2} -\omega\sqrt{\delta} \bigg)
 \bigg[  \exp\bigg( {-}\frac{(x+\psi'_+(a_{\theta})-a_{\theta})^2}{2} \bigg)
 \bigg]_S^{a_{\theta}-\psi'_+(a_{\theta})} \\
&= \big( \e^{\omega\delta} -\e^{-\omega\sqrt{\delta}} \big)
 \exp\bigg( \frac{\psi'_+(a_{\theta})^2-a_{\theta}^2}{2} \bigg) \\
&\quad +\exp\bigg( \frac{\psi'_+(a_{\theta})^2-a_{\theta}^2}{2} -\omega\sqrt{\delta} \bigg)
 \exp\bigg( {-}\frac{(S+\psi'_+(a_{\theta})-a_{\theta})^2}{2} \bigg).
\end{align*}
We similarly find
\begin{align*}
&\sqrt{2\pi} \int_I u \,d\fm \\
&\ge \exp\bigg( \frac{\psi'_+(a_{\theta})^2-a_{\theta}^2}{2} -\omega\sqrt{\delta} \bigg)
 \int_{a_{\theta}-\psi'_+(a_{\theta})}^T u(x)
 \exp\bigg( {-}\frac{(x+\psi'_+(a_{\theta})-a_{\theta})^2}{2} \bigg) \,dx \\
&\quad +\exp\bigg( \frac{\psi'_+(a_{\theta})^2-a_{\theta}^2}{2} +\omega\delta \bigg)
 \int_{-\infty}^{a_{\theta}-\psi'_+(a_{\theta})} u(x)
 \exp\bigg( {-}\frac{(x+\psi'_+(a_{\theta})-a_{\theta})^2}{2} \bigg) \,dx \\
&= \big( \e^{-\omega\sqrt{\delta}} -\e^{\omega\delta} \big)
 \exp\bigg( \frac{\psi'_+(a_{\theta})^2-a_{\theta}^2}{2} \bigg) \\
&\quad -\exp\bigg( \frac{\psi'_+(a_{\theta})^2-a_{\theta}^2}{2} -\omega\sqrt{\delta} \bigg)
 \exp\bigg( {-}\frac{(T+\psi'_+(a_{\theta})-a_{\theta})^2}{2} \bigg).
\end{align*}
Therefore we obtain from \eqref{eq:psi'/e}, \eqref{eq:eT}, \eqref{eq:geS}
and $|(\e^{-t^2/2})'| \le \e^{-1/2} <1$ that
\begin{equation}\label{eq:xe^p}
\bigg| \int_I u \,d\fm \bigg| \le C(\theta,\ve) \delta^{(1-\ve)/2}.
\end{equation}

Thanks to \eqref{eq:x^2e^p} and \eqref{eq:xe^p}, we obtain
\[ \Var_{(I,\fm)} (u) \ge 1-C(\theta,\ve) \delta^{(1-\ve)/2}. \]
Combining this with \eqref{eq:e^p} completes the proof.
$\qedd$
\end{proof}

\section{Reverse Poincar\'e inequality on $M$ and applications}\label{sc:rev}%%%%%%%%%%
%%%%%%%%%%%%%%%%%%

Henceforth we consider Riemannian manifolds and
apply the $1$-dimensional analysis in the previous sections
via the needle decomposition.
This section is devoted to a reverse Poincar\'e inequality on $M$
derived from Proposition~\ref{pr:revP}, followed by several applications.

\subsection{Decomposition of deficit}\label{ssc:peri}%%%%%%%%%
%%%%%%%%%%%%%%%%%%%%

Let $(M,g,\fm)$ be a complete $\cC^{\infty}$-Riemannian manifold
equipped with a measure $\fm=\e^{-\Psi} \vol_g$
such that $\Ric_{\infty} \ge 1$ and $\fm(M)=1$.
Fix $\theta \in (0,1)$ and take a Borel set $A \subset M$ with $\fm(A)=\theta$.

Put $f:=\chi_A -\theta$ and denote by $(Q,\nu)$ and $\{ (X_q,\fm_q) \}_{q \in Q}$
the elements of the needle decomposition as in Theorem~\ref{th:Ndl}.
Then $(X_q,\fm_q)$ enjoys $\Ric_{\infty} \ge 1$ (or $\CD(1,\infty)$)
for $\nu$-almost every $q \in Q$.
Recall from Subsection~\ref{ssc:needle} that this needle decomposition can be used to
prove the isoperimetric inequality $\cI_{(M,\fm)} \ge \cI_{(\R,\bm{\gamma})}$ on $M$
via those on needles $(X_q,\fm_q)$.
We also define $A_q:=A \cap X_q$ for $q \in Q$.
By \cite[Lemma~4.1]{CMM}, one can decompose the isoperimetric deficit of $A$
into those of $A_q$ as follows.

\begin{lemma}[Decomposition of deficit]\label{lm:Lem4.1}
We have
\[ \sP(A)-\cI_{(\R,\bm{\gamma})}(\theta)
 \ge \int_Q \big( \sP(A_q) -\cI_{(\R,\bm{\gamma})}(\theta) \big) \,\nu(dq), \]
where $\sP(A_q)$ denotes the perimeter of $A_q$ in $(X_q,\fm_q)$.
\end{lemma}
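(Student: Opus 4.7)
The plan is to reduce the inequality to a sub-additivity property of the perimeter along the needle decomposition. Since $\nu(Q) = \fm(M) = 1$ by Theorem~\ref{th:Ndl}\eqref{Ndl1}, the constant $\cI_{(\R,\bm{\gamma})}(\theta)$ integrates to itself against $\nu$, so the statement is equivalent to
\[
\sP(A) \ge \int_Q \sP(A_q) \,\nu(dq).
\]

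To establish this, I would argue directly from the definition \eqref{eq:peri}. Choose a sequence $\{\phi_i\}_{i \in \N}$ of Lipschitz functions with $\phi_i \to \chi_A$ in $L^1(\fm)$ and $\lim_{i \to \infty} \int_M |\nabla \phi_i| \,d\fm = \sP(A)$. Disintegrating via Theorem~\ref{th:Ndl}\eqref{Ndl1}, we have
\[
\int_M |\nabla \phi_i| \,d\fm = \int_Q \bigg( \int_{X_q} |\nabla \phi_i| \,d\fm_q \bigg) \nu(dq).
\]
Since each transport ray $X_q$ is the image of a unit-speed minimal geodesic by Theorem~\ref{th:Ndl}\eqref{Ndl2}, the restriction $\phi_i|_{X_q}$ is Lipschitz and its intrinsic derivative is pointwise bounded by $|\nabla \phi_i|$. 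Moreover, the $L^1(\fm)$-convergence $\phi_i \to \chi_A$ disintegrates: using $\fm(A) = \int_Q \fm_q(A) \,\nu(dq)$ together with Fubini, one sees that $\int_{X_q} |\phi_i - \chi_{A_q}| \,d\fm_q \to 0$ in $L^1(Q,\nu)$, hence along some subsequence (which I may extract while keeping $\int_M |\nabla \phi_i| \,d\fm$ convergent) $\phi_i|_{X_q} \to \chi_{A_q}$ in $L^1(\fm_q)$ for $\nu$-almost every $q \in Q$.

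For any such $q$, the very definition of $\sP(A_q)$ on $(X_q,\fm_q)$ yields
\[
\liminf_{i \to \infty} \int_{X_q} \bigl| (\phi_i|_{X_q})' \bigr| \,d\fm_q \ge \sP(A_q).
\]
Combining the tangential bound with Fatou's lemma applied to the disintegrated integrals, we conclude
\[
\sP(A) = \lim_{i \to \infty} \int_M |\nabla \phi_i| \,d\fm
 \ge \int_Q \liminf_{i \to \infty} \int_{X_q} \bigl| (\phi_i|_{X_q})' \bigr| \,d\fm_q \,\nu(dq)
 \ge \int_Q \sP(A_q) \,\nu(dq),
\]
which is what we wanted.

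The main obstacle is not analytic depth but measurability/extraction bookkeeping: one needs the map $q \mapsto \sP(A_q)$ to be $\nu$-measurable and one must carry out the subsequence extraction carefully so that both the $L^1(\fm_q)$-convergence $\phi_i|_{X_q} \to \chi_{A_q}$ holds $\nu$-a.e.\ and the limit $\lim_i \int_M |\nabla \phi_i| \,d\fm = \sP(A)$ is preserved. This is standard once the needle disintegration is set up, and is essentially the content of \cite[Lemma~4.1]{CMM} transplanted to the present setting; no new idea beyond the disintegration and Fatou is needed.
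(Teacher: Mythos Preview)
Your argument is correct and is exactly the approach the paper has in mind: the paper does not give a self-contained proof but simply cites \cite[Lemma~4.1]{CMM} and remarks that the only nontrivial point is the measurability of $q \mapsto \sP(A_q)$, after which the inequality follows from Fatou's lemma---precisely the disintegration-plus-Fatou argument you have written out. Your handling of the subsequence extraction and the tangential bound $|(\phi_i|_{X_q})'| \le |\nabla \phi_i|$ along geodesic needles is the standard way to make this rigorous.
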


We remark that what we need to take care is the measurability of $\sP(A_q)$ in $q \in Q$,
then the inequality itself follows from Fatou's lemma.

\subsection{Reverse Poincar\'e inequality}\label{ssc:revP}%%%%%%%%%
%%%%%%%%%%%%%%%%%%%%

Let $u:M \lra \R$ be the guiding function associated with $f=\chi_A -\theta$ above
(recall Subsection~\ref{ssc:needle}).
Notice that $u \in L^1(\fm)$ holds since $u$ is $1$-Lipschitz
and the measure $\fm$ has the Gaussian decay.
Recall from \eqref{eq:Poin} that we have the Poincar\'e inequality
\[ \Var_{(M,\fm)}(u) \le \int_M |\nabla u|^2 \,d\fm =1, \]
where $|\nabla u|=1$ $\fm$-almost everywhere since $\nu$-almost every needle is not a singleton
(by $f \neq 0$ and Theorem~\ref{th:Ndl}(\ref{Ndl3})).
We shall show a reverse inequality by integrating \eqref{eq:revP} on needles.

\begin{theorem}[Reverse Poincar\'e inequality]\label{th:revP}
Let $(M,g,\fm)$ be a complete weighted Riemannian manifold
such that $\Ric_{\infty} \ge 1$ and $\fm(M)=1$.
Fix $\theta,\ve \in (0,1)$ and take a Borel set $A \subset M$ with $\fm(A)=\theta$
and $\sP(A) \le \cI_{(\R,\bm{\gamma})}(\theta)+\delta$ for sufficiently small $\delta>0$
$($relative to $\theta$ and $\ve)$.
Then the guiding function $u$ associated with $f=\chi_A -\theta$ satisfies
\[ \Var_{(M,\fm)}(u) \ge \frac{1}{\Lambda'(\theta,\ve,\delta)} \int_M |\nabla u|^2 \,d\fm, \]
where $\Lambda'(\theta,\ve,\delta) \le (1-C_7(\theta,\ve)\delta^{(1-\ve)/(3-\ve)})^{-1}$
and in particular $\lim_{\delta \to 0}\Lambda'(\theta,\ve,\delta)=1$.
\end{theorem}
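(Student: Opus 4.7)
The plan is to combine the needle decomposition (Theorem~\ref{th:Ndl}) with the one-dimensional reverse Poincaré inequality (Proposition~\ref{pr:revP}), paying a Markov-type price on the needles whose isoperimetric deficit is large. Applying Theorem~\ref{th:Ndl} to $f := \chi_A - \theta$ produces the partition $\{(X_q, \fm_q)\}_{q \in Q}$, the measure $\nu$ on $Q$, and the $1$-Lipschitz guiding function $u$. By Theorem~\ref{th:Ndl}(iii), $\fm_q(A_q) = \theta$ for $\nu$-almost every $q$; and since $|u(x) - u(y)| = d(x,y)$ on every transport ray, parametrising each $X_q$ by signed arc-length identifies $u|_{X_q}$ with an affine function of slope $\pm 1$. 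In particular $|\nabla u| = 1$ $\fm$-almost everywhere, so $\int_M |\nabla u|^2 \,d\fm = 1$. The disintegration $\fm = \int_Q \fm_q \,\nu(dq)$ yields the conditional variance identity
\[
\Var_{(M,\fm)}(u) = \int_Q \Var_{(X_q,\fm_q)}(u) \,\nu(dq) + \Var_\nu\!\left( q \mapsto \int_{X_q} u \,d\fm_q \right) \ge \int_Q \Var_{(X_q,\fm_q)}(u) \,\nu(dq),
\]
reducing the problem to a uniform lower bound on needle variances.

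To produce such a bound, I set $\delta_q := \sP(A_q) - \cI_{(\R,\bm{\gamma})}(\theta) \ge 0$; Lemma~\ref{lm:Lem4.1} gives $\int_Q \delta_q \,\nu(dq) \le \delta$. For a threshold $\eta > 0$ to be chosen, let $Q_1 := \{q : \delta_q \le \eta\}$ and $Q_2 := Q \setminus Q_1$, so Markov's inequality yields $\nu(Q_2) \le \delta/\eta$. After reversing and translating $X_q$ (a transformation affecting neither $\Var_{(X_q,\fm_q)}(u)$ nor the fact that $u|_{X_q}$ is affine of slope $\pm 1$) to match the normalisation~\eqref{eq:a_theta}, Proposition~\ref{pr:revP} applies on $Q_1$ provided $\eta$ is below the threshold dictated by $\theta$ and $\ve$, and yields $\Var_{(X_q,\fm_q)}(u) \ge 1 - C_7(\theta,\ve) \eta^{(1-\ve)/2}$. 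Using the trivial bound $\Var_{(X_q,\fm_q)}(u) \ge 0$ on $Q_2$, I obtain
\[
\Var_{(M,\fm)}(u) \ge \bigl(1 - C_7(\theta,\ve) \eta^{(1-\ve)/2}\bigr)(1 - \delta/\eta).
\]

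Finally, I will optimise by choosing $\eta := \delta^{2/(3-\ve)}$, which balances $\eta^{(1-\ve)/2}$ and $\delta/\eta$ at the common order $\delta^{(1-\ve)/(3-\ve)}$, giving the claimed rate $\Var_{(M,\fm)}(u) \ge 1 - C'_7(\theta,\ve) \delta^{(1-\ve)/(3-\ve)}$ for a suitable constant $C'_7(\theta,\ve)$. The main delicate point is precisely this exponent: $(1-\ve)/(3-\ve)$ is strictly smaller than the needle-wise exponent $(1-\ve)/2$ of Proposition~\ref{pr:revP}, and the degradation comes from the Markov-type loss incurred by dismissing bad needles outright. Any attempt to sharpen the exponent would require extracting quantitative information from needles where the deficit is only moderately small rather than using the trivial bound there, and it is this inherent lossiness that constitutes the principal obstacle.
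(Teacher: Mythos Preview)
Your proposal is correct and follows essentially the same approach as the paper. The paper also splits $Q$ via a Markov argument at threshold $\delta^{1-a}$ with $a=(1-\ve)/(3-\ve)$ (i.e.\ exactly your $\eta=\delta^{2/(3-\ve)}$), applies Proposition~\ref{pr:revP} on the good set, discards the bad set, and uses the same conditional-variance lower bound $\Var_{(M,\fm)}(u)\ge\int_Q \Var_{(X_q,\fm_q)}(u)\,\nu(dq)$ to conclude.
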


\begin{proof}
We set $a:=(1-\ve)/(3-\ve)$ and consider
\[ Q':=\{ q \in Q \,|\, \fm_q(A_q)=\theta,\,
 \sP(A_q) -\cI_{(\R,\bm{\gamma})}(\theta) <\delta^{1-a} \}. \]
Then we deduce from Lemma~\ref{lm:Lem4.1} that $\nu(Q') \ge 1-\delta^a$.
Precisely, assuming in contrary $\nu(Q') <1-\delta^a$, we have
\[ \delta \ge \int_Q \big( \sP(A_q) -\cI_{(\R,\bm{\gamma})}(\theta) \big) \,\nu(dq)
 \ge \delta^{1-a} \cdot \nu(Q \setminus Q') > \delta,  \]
a contradiction.

For $\nu$-almost every $q \in Q'$, since $u$ is affine and $|u'| \equiv 1$ on $X_q$, \eqref{eq:revP} yields
\[ \Var_{(X_q,\fm_q)}(u)  \ge \frac{1}{\Lambda(\theta,\ve,\delta^{1-a})} \]
for $\Lambda$ from Proposition~\ref{pr:revP}.
Integrating in $q$ implies
\begin{align}
\Var_{(M,\fm)}(u)
&= \int_Q \int_{X_q} u^2 \,d\fm_q \,\nu(dq)
 -\bigg( \int_Q \int_{X_q} u \,d\fm_q \,\nu(dq) \bigg)^2
 \nonumber\\
&\ge \int_Q \bigg\{ \int_{X_q} u^2 \,d\fm_q -\bigg( \int_{X_q} u \,d\fm_q \bigg)^2 \bigg\} \,\nu(dq)
 \nonumber\\
&\ge \frac{1-\delta^a}{\Lambda(\theta,\ve,\delta^{1-a})}, \label{eq:var_X'}
\end{align}
where we used Theorem~\ref{th:Ndl}(\ref{Ndl1}) as well as
the Cauchy--Schwarz inequality on $(Q,\nu)$.
Recalling the choice of $\Lambda$ in Proposition~\ref{pr:revP} and $a=(1-\ve)/(3-\ve)$, we obtain
\[ \frac{\Lambda(\theta,\ve,\delta^{1-a})}{1-\delta^a}
 \le \frac{1}{(1-\delta^a) (1-C_6(\theta,\ve) \delta^{(1-a)(1-\ve)/2})}
 \le \frac{1}{1-C(\theta,\ve)\delta^{(1-\ve)/(3-\ve)}}. \]
This completes the proof.
$\qedd$
\end{proof}

Now let us choose the guiding function $u:M \lra \R$ so that
\[ \int_M u \,d\fm=0 \]
(by replacing $u$ with $u-\int_M u \,d\fm$).
Then, combining \eqref{eq:var_X'} with the Poincar\'e inequality \eqref{eq:Poin}, we obtain
\begin{equation}\label{eq:var_X}
\int_Q \bigg( \int_{X_q} u \,d\fm_q \bigg)^2 \,\nu(dq)
 \le \int_M u^2 \,d\fm -\frac{1}{\Lambda'(\theta,\ve,\delta)}
 \le 1-\frac{1}{\Lambda'(\theta,\ve,\delta)}
 \le C_7(\theta,\ve)\delta^{(1-\ve)/(3-\ve)}.
\end{equation}
Therefore $\int_{X_q} u \,d\fm_q$ is close to $0$ on most needles $q$.

Since most needles are long and the measures on them are close to
the Gaussian measure $\bm{\gamma}$,
\eqref{eq:var_X} shows that, on most needles, the guiding function $u$ attains $0$
at a point close to the maximum of the density function (minimum of the weight function $\psi$).
This observation plays an essential role to integrate the estimates on needles
(see Proposition~\ref{pr:var_X} and the proofs of Proposition~\ref{pr:Pr6.4} and Theorem~\ref{th:main}),
and we stress that the guiding function $u$ is the key ingredient.

\subsection{Reverse logarithmic Sobolev inequality}\label{ssc:rLSI}%%%%%%%%%%%
%%%%%%%%%%%%%%%%%%%%%%

Going back to the seminal work of Otto--Villani \cite{OV},
it is now well known that the \emph{logarithmic Sobolev inequality},
\[ \int_M f \log f \,d\fm \le \frac{1}{2\lambda} \int_M \frac{|\nabla f|^2}{f} \,d\fm \]
for nonnegative locally Lipschitz functions $f$ with $\int_M f \,d\fm=1$,
implies the \emph{Talagrand inequality},
\[ W_2(\mu,\fm)^2 \le \frac{2}{\lambda} \Ent_{\fm}(\mu) \]
for $\mu \in \cP^2(M)$,
and the Talagrand inequality implies the Poincar\'e inequality
\[ \Var_{(M,\fm)}(u) \le \frac{1}{\lambda} \int_M |\nabla u|^2 \,d\fm \]
(both without loss of constants).
In the Talagrand inequality, $W_2$ is the \emph{$L^2$-Wasserstein distance},
$\cP^2(M)$ is the set of Borel probability measures on $M$ of finite second moment,
and $\Ent_{\fm}(\mu):=\int_M \rho \log\rho \,d\fm$ with $\mu=\rho\fm$ is the \emph{relative entropy}.
We refer to \cite[Theorem~22.17]{Vi} for a precise statement that is available in our setting,
and to the bibliographical notes in \cite[Chapter~22]{Vi} for a historical account and related results.

By reversing these implications, we deduce from Theorem~\ref{th:revP}
the following reverse forms of logarithmic Sobolev and Talagrand inequalities.

\begin{corollary}[Reverse Talagrand \& log-Sobolev inequalities]\label{cr:rLSI}
Let $(M,g,\fm)$ be as in Theorem~$\ref{th:revP}$
and assume $\cI_{(M,\fm)}(\theta) \le \cI_{(\R,\bm{\gamma})}(\theta) +\delta$
for some $\theta \in (0,1)$ and sufficiently small $\delta>0$.
Then, for any $\lambda >\Lambda'(\theta,\ve,\delta)$, we have the following.
\begin{enumerate}[{\rm (i)}]
\item
There exists some $\mu \in \cP^2(M) \setminus \{\fm\}$ such that
\begin{equation}\label{eq:rT2}
W_2(\mu,\fm)^2 \ge \frac{2}{\lambda} \Ent_{\fm}(\mu).
\end{equation}

\item
There exists some nonconstant, nonnegative, locally Lipschitz function $f$
such that $\int_M f \,d\fm=1$ and
\begin{equation}\label{eq:rLSI}
\int_M f \log f \,d\fm \ge \frac{1}{2\lambda} \int_M \frac{|\nabla f|^2}{f} \,d\fm.
\end{equation}
\end{enumerate}
\end{corollary}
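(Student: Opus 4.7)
The plan is to derive both (i) and (ii) by taking the contrapositive of the classical chain of implications
\[ \text{LSI}(\lambda) \;\Longrightarrow\; \text{T}_2(\lambda) \;\Longrightarrow\; \text{P}(\lambda), \]
as recalled in the paragraph preceding the corollary (these hold in our setting; see \cite[Theorem~22.17]{Vi} which applies thanks to $\Ric_\infty \ge 1$ and the Gaussian decay of $\fm$).

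First I would apply Theorem~\ref{th:revP} to the (centered) guiding function $u$ associated with $A$, obtaining
\[ \Var_{(M,\fm)}(u) \ge \frac{1}{\Lambda'(\theta,\ve,\delta)} \int_M |\nabla u|^2 \,d\fm. \]
Fix any $\lambda > \Lambda'(\theta,\ve,\delta)$, so that $1/\lambda < 1/\Lambda'(\theta,\ve,\delta)$. Multiplying the reverse Poincar\'e inequality by the strict inequality of constants yields
\[ \Var_{(M,\fm)}(u) > \frac{1}{\lambda} \int_M |\nabla u|^2 \,d\fm, \]
which exhibits a locally Lipschitz function witnessing the failure of the Poincar\'e inequality $\mathrm{P}(\lambda)$ on $(M,g,\fm)$.

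For (i), I invoke the implication $\text{T}_2(\lambda) \Rightarrow \text{P}(\lambda)$, namely that universal validity of $W_2(\mu,\fm)^2 \le (2/\lambda)\Ent_{\fm}(\mu)$ on $\cP^2(M)$ implies universal validity of $\Var_{(M,\fm)}(v) \le (1/\lambda)\int_M |\nabla v|^2 \,d\fm$ for all locally Lipschitz $v$. Taking the contrapositive, the failure of $\mathrm{P}(\lambda)$ just established forces the existence of some $\mu \in \cP^2(M)$ with $W_2(\mu,\fm)^2 > (2/\lambda) \Ent_{\fm}(\mu) \ge (2/\lambda) \Ent_{\fm}(\mu)$, which is \eqref{eq:rT2}. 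For (ii), I apply the composed implication $\text{LSI}(\lambda) \Rightarrow \text{P}(\lambda)$ (equivalently, combine the two implications of the chain) and take its contrapositive: the failure of $\mathrm{P}(\lambda)$ produces a nonnegative, locally Lipschitz $f$ with $\int_M f \,d\fm=1$ and $\int_M f \log f \,d\fm > (1/(2\lambda)) \int_M |\nabla f|^2/f \,d\fm$, which is \eqref{eq:rLSI}.

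The only real subtlety is to invoke the forward implications in a form applicable to our weighted Riemannian setting. Because $\Ric_\infty \ge 1$, the measure $\fm$ is a probability measure with Gaussian decay, so the hypotheses of \cite[Theorem~22.17]{Vi} (which ultimately relies on the Otto--Villani HWI machinery) are met; this is the main point to justify, but it is standard given the curvature bound and does not require any new ingredient beyond what has been used in the preceding sections.
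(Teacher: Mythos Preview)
Your proposal is correct and follows essentially the same route as the paper: both derive (i) and (ii) by contraposition of the classical chain $\text{LSI}(\lambda)\Rightarrow\text{T}_2(\lambda)\Rightarrow\text{P}(\lambda)$, using the witness $u$ from Theorem~\ref{th:revP} to violate $\text{P}(\lambda)$ for any $\lambda>\Lambda'(\theta,\ve,\delta)$. The paper adds, after the corollary, an explicit construction of $f$ from $u$ (via truncation and a perturbation $f_\varepsilon=1+\varepsilon h$) to make (ii) more concrete, but this is supplementary rather than a different proof.
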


The proofs of the above implications (log-Sobolev to Talagrand, Talagrand to Poincar\'e)
are based on dual formulations and semigroup approaches
(employing heat semigroup \cite{OV} or Hamilton--Jacobi semigroup \cite{BoGL,LVhj}),
and then the relation of $u$ from Theorem~\ref{th:revP},
$\mu$ in \eqref{eq:rT2}, and $f$ in \eqref{eq:rLSI} is seemingly unclear.

In the direct implication from the logarithmic Sobolev inequality to the Poincar\'e inequality
in \cite[Theorem~6.18]{LV}, we have a more explicit argument and can build $f$ from $u$ as follows.
Given any $\lambda>\Lambda'(\theta,\ve,\delta)$, truncating $u$ in Theorem~\ref{th:revP},
we obtain $u_{\sigma}:=\max\{ \min\{u,\sigma\},-\sigma \}$ for some (large) $\sigma>0$
satisfying
\begin{equation}\label{eq:LSI-P}
\Var_{(M,\fm)}(u_{\sigma}) \ge \frac{1}{\lambda} \int_M |\nabla u_{\sigma}|^2 \,d\fm.
\end{equation}
Now let us put $h:=u_{\sigma} -\int_M u_{\sigma} \,d\fm$
and consider the function $f_{\ve}:=1+\ve h$ for $|\ve| < (2\|h\|_{L^{\infty}})^{-1}$.
Note that $f_{\ve} >0$ and $\int_M f_{\ve} \,d\fm=1$.
Then we calculate
\[ \int_M \frac{|\nabla f_{\ve}|^2}{f_{\ve}} \,d\fm
 =\ve^2 \int_M \frac{|\nabla h|^2}{1+\ve h} \,d\fm
 \le \frac{\ve^2}{1-\ve\|h\|_{L^{\infty}}} \int_M |\nabla h|^2 \,d\fm. \]
Moreover, it follows from
\[ (1+t)\log(1+t) \ge t+at^2 \qquad \text{for}\ {-}1<t \le \frac{1}{2a}-1,\ a \in \bigg( 0, \frac{1}{2} \bigg) \]
that
\[ \int_M f_{\ve} \log f_{\ve} \,d\fm
 \ge \int_M \bigg( \ve h +\frac{1}{2(1+\ve\|h\|_{L^{\infty}})} (\ve h)^2 \bigg) \,d\fm
 =\frac{\ve^2}{2(1+\ve\|h\|_{L^{\infty}})} \int_M h^2 \,d\fm. \]
Combining these with \eqref{eq:LSI-P} yields the reverse logarithmic Sobolev inequality
\[ \int_M f_{\ve} \log f_{\ve} \,d\fm
 \ge \frac{1-\ve\|h\|_{L^{\infty}}}{1+\ve\|h\|_{L^{\infty}}} \frac{1}{2\lambda}
 \int_M \frac{|\nabla f_{\ve}|^2}{f_{\ve}} \,d\fm. \]

We close the section with some remarks on related investigations.

\begin{remark}[Related results]\label{rm:quan}
\begin{enumerate}[(a)]
\item
The stability of geometric inequalities on Riemannian manifolds
is an important problem and known to have applications in the study of limit spaces.
For instance, Colding \cite{Co1,Co2} showed that
an $n$-dimensional Riemannian manifold $(M,g)$ satisfying $\Ric_g \ge n-1$
is close to the unit sphere $\Sph^n$ in the Gromov--Hausdorff distance
if and only if the volume $\vol_g(M)$ is close to that of $\Sph^n$.
We will denote the volume of $\Sph^n$ by $\omega_n$.
Notice that $\vol_g(M)$ is not greater than $\omega_n$
by the Bishop comparison theorem (see \cite{Ch}),
and the almost maximal volume implies that the manifold is homeomorphic to $\Sph^n$ by \cite{Per}.
It is also shown in \cite{Co2} that, if the radius of $M$ as above is close to $\pi$,
then its volume is close to $\omega_n$ (thereby $M$ is homeomorphic to $\Sph^n$),
where the \emph{radius} of $M$ is defined as $\inf_{x \in M} \sup_{y \in M}d(x,y)$
and is not greater than $\pi$.
Another result on this kind of `almost sphere theorem' by Petersen \cite{Pet}
asserts that the radius is close to $\pi$
if and only if the $(n+1)$-th eigenvalue of the Laplacian is close to $n$
(later improved to the $n$-th eigenvalue by Aubry \cite{Au}).
We refer to \cite{HM,KaMo} for recent generalizations of some of these results to $\RCD$-spaces
(recall Remark~\ref{rm:wRic}(c)).

\item
Among functional inequalities, the relation between the Poincar\'e inequality
(spectral gap) and the diameter of Riemannian manifolds has been well investigated
(see \cite{BBG,Be,Che,Cr}).
We refer to \cite{CaMoSe} for a recent generalization
to essentially non-branching $\CD(N-1,N)$-spaces ($N \in (1,\infty)$).
In \cite{CaMoSe} they make use of the needle decomposition in the same spirit as
\cite{CMM} on quantitative isoperimetric inequalities.
See also \cite{OT} for the rigidity of the logarithmic Sobolev inequality on weighted Riemannian manifolds
with $\Ric_{\infty} \ge K>0$ (the case of $\Ric_N \ge K>0$ with $N \in [n,\infty)$ is open).
In the Euclidean setting, quantitative estimates in comparison with the Gaussian spaces
are studied in \cite{DF,CF} for the Poincar\'e inequality,
and in \cite{BGRS,FIL,CF} for the logarithmic Sobolev inequality.

\item
We refer to a recent paper \cite[Theorem~2.1]{ABS} for another kind of rigidity result
concerning a gradient estimate on $\RCD(0,N)$-spaces.
\end{enumerate}
\end{remark}

\section{Quantitative isoperimetric inequality}\label{sc:main}%%%%%%%%%%%%%%%
%%%%%%%%%%%%%%%%%%%

As in the previous section, let $(M,g,\fm)$ be a weighted Riemannian manifold
with $\Ric_{\infty} \ge 1$ and $\fm(M)=1$,
fix $\theta \in (0,1)$ and take a Borel set $A \subset M$ with $\fm(A)=\theta$.
We employ the needle decomposition associated with $f:=\chi_A -\theta$
as in Subsection~\ref{ssc:needle}: $(Q,\nu)$, $\{ (X_q,\fm_q) \}_{q \in Q}$,
and the guiding function $u$ with $\int_M u \,d\fm=0$.
Set $A_q :=A \cap X_q$ as in the previous section.

Put $\delta(A):=\sP(A)-\cI_{(\R,\bm{\gamma})}(\theta)$ and define
\begin{equation}\label{eq:Q_l}
Q_{\ell} :=\big\{ q \in Q \,\big|\, \fm_q(A_q)=\theta, \,
 \sP(A_q) -\cI_{(\R,\bm{\gamma})}(\theta) <\sqrt{\delta(A)} \big\}
\end{equation}
as a set of `long' needles
(recall from Lemma~\ref{lm:I_D} that small deficit implies large diameter).
Notice that $Q_{\ell}$ is a measurable set since the function $q \mapsto \sP(A_q)$ is measurable
by \cite[Lemma~4.1]{CMM}.
We observe from Lemma~\ref{lm:Lem4.1} the following
(similarly to the proof of Theorem~\ref{th:revP}).

\begin{lemma}[$Q_{\ell}$ is large]\label{lm:Q_l}
We have $\nu(Q_{\ell}) \ge 1-\sqrt{\delta(A)}$.
\end{lemma}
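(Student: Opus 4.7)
The plan is to apply Markov's inequality to the nonnegative function $q \mapsto \sP(A_q) - \cI_{(\R,\bm{\gamma})}(\theta)$ on $(Q,\nu)$. First I would verify nonnegativity: by Theorem~\ref{th:Ndl}\eqref{Ndl2}, for $\nu$-almost every $q \in Q$ the needle $(X_q,|\cdot|,\fm_q)$ satisfies $\CD(1,\infty)$, and by Theorem~\ref{th:Ndl}\eqref{Ndl3} with $f = \chi_A - \theta$ one has $\fm_q(A_q) = \theta$. Hence the one-dimensional isoperimetric inequality under $\Ric_\infty \ge 1$ (the case $K=1$, $N=\infty$, $D=\infty$ of Theorem~\ref{th:isop}) gives $\sP(A_q) \ge \cI_{(\R,\bm{\gamma})}(\theta)$ for $\nu$-a.e.\ $q$.

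Next I would invoke Lemma~\ref{lm:Lem4.1}, which states
\[
\delta(A) \;=\; \sP(A) - \cI_{(\R,\bm{\gamma})}(\theta)
\;\ge\; \int_Q \bigl[\sP(A_q) - \cI_{(\R,\bm{\gamma})}(\theta)\bigr] \,\nu(dq).
\]
Combined with the nonnegativity above, Markov's inequality applied to the set $Q \setminus Q_\ell = \{q \in Q : \sP(A_q) - \cI_{(\R,\bm{\gamma})}(\theta) \ge \sqrt{\delta(A)}\,\}$ yields
\[
\sqrt{\delta(A)}\cdot \nu(Q \setminus Q_\ell)
\;\le\; \int_{Q\setminus Q_\ell} \bigl[\sP(A_q) - \cI_{(\R,\bm{\gamma})}(\theta)\bigr]\, \nu(dq)
\;\le\; \delta(A),
\]
so $\nu(Q \setminus Q_\ell) \le \sqrt{\delta(A)}$. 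Finally, since $\nu(Q) = \fm(M) = 1$ by Theorem~\ref{th:Ndl}\eqref{Ndl1}, taking complements gives $\nu(Q_\ell) \ge 1 - \sqrt{\delta(A)}$.

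The argument is essentially a one-line Chebyshev/Markov estimate once the pieces are in place, so there is no real obstacle; the only points requiring mild care are the measurability of $q \mapsto \sP(A_q)$ (which is granted by \cite{CMM}, as noted in the paragraph defining $Q_\ell$) and the fact that the integrand in Lemma~\ref{lm:Lem4.1} is genuinely nonnegative, so that Markov's inequality applies in the clean form above.
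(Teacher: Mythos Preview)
Your proof is correct and follows essentially the same approach as the paper: both use Lemma~\ref{lm:Lem4.1} together with the nonnegativity of $\sP(A_q)-\cI_{(\R,\bm{\gamma})}(\theta)$ to bound $\nu(Q_\ell^c)$ via a Markov-type estimate. The paper phrases it as a short proof by contradiction while you invoke Markov's inequality directly, but the underlying computation is identical.
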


For further analyzing the behavior of long needles, we define
\begin{equation}\label{eq:Q^+-}
\begin{array}{l}
Q_{\ell}^- :=\big\{ q \in Q_{\ell} \,\big|\,
 \fm_q \big( A_q \,\triangle\, (-\infty,r_{\fm_q}^-(\theta)] \big) \le \sqrt{\delta(A)} \big\}, \medskip\\
Q_{\ell}^+ :=\big\{ q \in Q_{\ell} \,\big|\,
 \fm_q \big( A_q \,\triangle\, [r_{\fm_q}^+(\theta),\infty) \big) \le \sqrt{\delta(A)} \big\},
\end{array}
\end{equation}
where $X_q$ is parametrized by $u$ and $r_{\fm_q}^{\pm}(\theta) \in X_q$ are defined by
\[ \fm_q \big( X_q \cap (-\infty,r_{\fm_q}^-(\theta)] \big)
 =\fm_q \big( X_q \cap [r_{\fm_q}^+(\theta),\infty) \big) =\theta \]
as in Proposition~\ref{pr:symm}.
The measurability of $Q_{\ell}^+$ and $Q_{\ell}^-$ can be shown as in \cite{CMM}
(see Lemma~6.1 and the paragraph following it).
Then the next lemma is a consequence of Proposition~\ref{pr:symm}.

\begin{lemma}[$Q_{\ell}^- \cup Q_{\ell}^+$ is large]\label{lm:Lem6.2}
If $\delta(A)$ is sufficiently small, then we have
\[ \nu\big( Q_{\ell} \setminus (Q_{\ell}^- \cup Q_{\ell}^+) \big) \le \sqrt{\delta(A)}. \]
\end{lemma}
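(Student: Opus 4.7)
The proof is a direct needle-by-needle application of Proposition~\ref{pr:symm}, integrated against $\nu$. For $\nu$-almost every $q \in Q$, Theorem~\ref{th:Ndl}~\eqref{Ndl2} supplies $\CD(1,\infty)$ on the needle $(X_q, |\cdot|, \fm_q)$ (so that, in the natural parametrization by $u$, the density has a $1$-convex weight function in the sense of \eqref{eq:Kcon}), while Theorem~\ref{th:Ndl}~\eqref{Ndl3} forces $\fm_q(A_q) = \theta$. By the defining inequality \eqref{eq:Q_l} of $Q_\ell$, one has $\sP(A_q) - \cI_{(\R,\bm{\gamma})}(\theta) < \sqrt{\delta(A)}$ whenever $q \in Q_\ell$. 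Hence, provided $\delta(A)$ is small enough that $\sqrt{\delta(A)}$ lies within the smallness regime of Proposition~\ref{pr:symm} (a condition depending only on $\theta$), that proposition applies on each such needle with deficit parameter $\delta = \sqrt{\delta(A)}$.

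Its conclusion reads
\[
\min\!\Bigl\{ \fm_q\bigl( A_q \triangle (-\infty, r_{\fm_q}^-(\theta)] \bigr),\ \fm_q\bigl( A_q \triangle [r_{\fm_q}^+(\theta), \infty) \bigr) \Bigr\} \le \frac{\sP(A_q) - \cI_{(X_q, \fm_q)}(\theta)}{C_6(\theta, \sqrt{\delta(A)})}.
\]
Since Theorem~\ref{th:isop} applied to the needle (with $K=1$, $N=\infty$) yields $\cI_{(X_q, \fm_q)}(\theta) \ge \cI_{(\R, \bm{\gamma})}(\theta)$, the numerator on the right is strictly less than $\sqrt{\delta(A)}$. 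Proposition~\ref{pr:symm} also asserts $\lim_{\delta \to 0} C_6(\theta, \delta) = \infty$, so for all $\delta(A)$ sufficiently small one has $C_6(\theta, \sqrt{\delta(A)}) \ge 1$. Therefore the minimum above is bounded by $\sqrt{\delta(A)}$, and by the definition \eqref{eq:Q^+-} this precisely means $q \in Q_\ell^- \cup Q_\ell^+$.

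It follows that $Q_\ell \setminus (Q_\ell^- \cup Q_\ell^+)$ is $\nu$-null, which is even stronger than the claimed bound. There is no genuine obstacle beyond tracking the smallness threshold for $\delta(A)$: one must choose it small enough simultaneously to apply Proposition~\ref{pr:symm} with parameter $\sqrt{\delta(A)}$ and to force $C_6 \ge 1$, both requirements being of the form ``$\delta(A) < \delta_0(\theta)$''. The measurability of $Q_\ell^\pm$ that makes the statement meaningful is inherited from the measurability of $q \mapsto \sP(A_q)$ and $q \mapsto \fm_q(A_q \triangle B)$ for a Borel set $B$, as already indicated in the excerpt following \eqref{eq:Q^+-}.
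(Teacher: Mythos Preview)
Your proof is correct and follows essentially the same route as the paper: apply Proposition~\ref{pr:symm} on each needle in $Q_\ell$, use $C_6(\theta,\sqrt{\delta(A)}) \ge 1$ for small $\delta(A)$, and conclude that ($\nu$-a.e.) every $q \in Q_\ell$ already lies in $Q_\ell^- \cup Q_\ell^+$. Your observation that the set is in fact $\nu$-null is accurate and slightly sharper; the paper derives the same inequality $\sP(A_q)-\cI_{(\R,\bm{\gamma})}(\theta)>\sqrt{\delta(A)}$ for $q\in Q_\ell\setminus(Q_\ell^-\cup Q_\ell^+)$ (already contradicting $q\in Q_\ell$) but then closes by integrating via Lemma~\ref{lm:Lem4.1} rather than invoking the contradiction directly.
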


\begin{proof}
Recall from Theorem~\ref{th:Ndl} that, for $\nu$-almost every $q \in Q$,
$(X_q,\fm_q)$ satisfies $\Ric_{\infty} \ge 1$ and $\fm_q(A_q)=\theta$.
Then we deduce from \eqref{eq:symdef} and
$\lim_{\delta \to 0}C_5(\theta,\delta)=\infty$ that
\[ \sP(A_q) -\cI_{(X_q,\fm_q)}(\theta)
 \ge \min\Big\{ \fm_q \big( A_q \,\triangle\, (-\infty,r_{\fm_q}^-(\theta)] \big),
 \fm_q \big( A_q \,\triangle\, [r_{\fm_q}^+(\theta),\infty) \big) \Big\} \]
for $q \in Q_{\ell}$ provided that $\delta(A)$ is sufficiently small.
Hence
\[ \sP(A_q) -\cI_{(\R,\bm{\gamma})}(\theta) \ge \sP(A_q) -\cI_{(X_q,\fm_q)}(\theta) >\sqrt{\delta(A)} \]
for $q \in Q_{\ell} \setminus (Q_{\ell}^- \cup Q_{\ell}^+)$,
and it follows from Lemma~\ref{lm:Lem4.1} that
\[ \delta(A) \ge \sqrt{\delta(A)} \cdot \nu\big( Q_{\ell} \setminus (Q_{\ell}^- \cup Q_{\ell}^+) \big). \]
$\qedd$
\end{proof}

Next we shall show that one of $Q_{\ell}^-$ and $Q_{\ell}^+$ necessarily has a small volume.
This is the most technical step in this section and the structure of the proof differs from
that of \cite[Proposition~6.4]{CMM},
due to the fact that the diameter of $M$ is not bounded and needles can be infinitely long
(cf., for example, \cite[Proposition~5.1, Corollary~5.4]{CMM}).
The following observation by virtue of \eqref{eq:var_X} will play a crucial role.
Recall that $a_{\theta} \in \R$ is defined by $\bm{\gamma}((-\infty,a_{\theta}])=\theta$.

\begin{proposition}[$u$ is nearly centered on most needles]\label{pr:var_X}
If $\delta(A)$ is sufficiently small, then there exists a measurable set $Q_c \subset Q$
such that $\nu(Q_c) \ge 1-\delta(A)^{(1-\ve)/(9-3\ve)}$ and
\begin{equation}\label{eq:|alpha|}
\max\big\{ |a_{\theta}-r_{\fm_q}^-(\theta)|,|a_{1-\theta}-r_{\fm_q}^+(\theta)| \big\}
 \le C_8(\theta,\ve) \delta(A)^{(1-\ve)/(9-3\ve)}
\end{equation}
for every $q \in Q_c \cap Q_{\ell}$.
\end{proposition}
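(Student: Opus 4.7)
The plan is to combine Chebyshev's inequality applied to \eqref{eq:var_X} with the near-Gaussian structure of each long needle guaranteed by Proposition~\ref{pr:psi}. The calibration $\eta := (1-\ve)/(9-3\ve)$ is chosen precisely so that $(1-\ve)/(3-\ve) - 2\eta = \eta$; setting
\[
Q_c := \bigg\{ q \in Q \,\bigg|\, \bigg| \int_{X_q} u \,d\fm_q \bigg| \le \delta(A)^{\eta} \bigg\},
\]
Chebyshev together with \eqref{eq:var_X} immediately yields $\nu(Q \setminus Q_c) \le C'_7(\theta,\ve)\, \delta(A)^{\eta}$, which supplies the required measure bound. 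Measurability of $Q_c$ reduces to that of $q \mapsto \int_{X_q} u \,d\fm_q$ via Theorem~\ref{th:Ndl}(\ref{Ndl1}).

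Next, for fixed $q \in Q_c \cap Q_{\ell}$, I would parametrize $X_q$ isometrically by the guiding function $u$ (using $|\nabla u|=1$ $\fm$-a.e.) and write $\fm_q = \e^{-\psi_q(u)}\,du$ with $\psi_q$ $1$-convex. After reversing the orientation of $X_q$ if necessary---an operation under which \eqref{eq:|alpha|} is invariant since $-a_\theta = a_{1-\theta}$ and the roles of $r_{\fm_q}^{\pm}(\theta)$ are interchanged---Bobkov's Lemma~\ref{lm:Bobkov} allows us to arrange $\cI_{(X_q,\fm_q)}(\theta) = \e^{-\psi_q(r_{\fm_q}^-(\theta))}$. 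Setting $c_q := r_{\fm_q}^-(\theta) - a_\theta$ and introducing the translated coordinate $t := u - c_q$ with weight $\tilde{\psi}_q(t) := \psi_q(t+c_q)$, hypothesis \eqref{eq:a_theta} holds by construction and the membership $q \in Q_{\ell}$ produces
\[
\e^{-\tilde{\psi}_q(a_\theta)} = \cI_{(X_q,\fm_q)}(\theta) \le \sP(A_q) < \e^{-\bm{\psi}_{\g}(a_\theta)} + \sqrt{\delta(A)},
\]
which is exactly hypothesis \eqref{eq:deficit} with $\delta$ replaced by $\sqrt{\delta(A)}$. Proposition~\ref{pr:psi} then applies, and reproducing the calculation leading to \eqref{eq:xe^p} in the proof of Proposition~\ref{pr:revP}---with a judicious choice of the auxiliary regularity parameter inside Proposition~\ref{pr:psi} as a function of $\ve$---combined with the derivative estimate \eqref{eq:psi'/e} produces $|\int t \,d\tilde{\fm}_q| \le C(\theta,\ve)\,\delta(A)^{\eta}$. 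Since the pushforward relation reads $\int_{X_q} u \,d\fm_q = \int t \,d\tilde{\fm}_q + c_q$, we conclude $|c_q| = |r_{\fm_q}^-(\theta) - a_\theta| \le C_8(\theta,\ve)\,\delta(A)^{\eta}$.

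For the companion bound on $|a_{1-\theta} - r_{\fm_q}^+(\theta)|$, I would exploit that Proposition~\ref{pr:psi} also controls $\tilde{\psi}_q$ on the large bulk interval $[S,T]$, which contains $a_{1-\theta}$ once $\delta(A)$ is sufficiently small. A direct integration against \eqref{eq:delta>}--\eqref{eq:delta<} gives $|\tilde{\fm}_q((-\infty, a_{1-\theta}]) - (1-\theta)| \le C(\theta)\,\delta(A)^{1/4}$, and combined with the positive lower bound on the density $\e^{-\tilde{\psi}_q}$ near $a_{1-\theta}$ that also follows from Proposition~\ref{pr:psi}, inversion produces $|r_{\tilde{\fm}_q}^-(1-\theta) - a_{1-\theta}| \le C(\theta)\,\delta(A)^{1/4}$. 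Untranslating via $r_{\fm_q}^+(\theta) = r_{\tilde{\fm}_q}^-(1-\theta) + c_q$ and absorbing the smaller $c_q$-contribution delivers the required estimate.

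The central obstacle is exponent bookkeeping: to achieve the claimed rate $\delta(A)^{(1-\ve)/(9-3\ve)}$ uniformly in $\ve \in (0,1)$, the auxiliary parameter entering Proposition~\ref{pr:psi} must be tuned as a function of $\ve$ (specifically, something like $\ve' \le (1+5\ve)/(9-3\ve)$) so that the mean-gap bound coming from \eqref{eq:xe^p} is no worse than $\delta(A)^{\eta}$. The algebraic identity $(1-\ve)/(3-\ve) - 2\eta = \eta$ is what pins down $\eta = (1-\ve)/(9-3\ve)$ as the unique balanced choice between the Chebyshev exceptional-set measure and the pointwise threshold on $|\int u \,d\fm_q|$.
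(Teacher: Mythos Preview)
Your proposal is correct and follows essentially the same route as the paper: Chebyshev on \eqref{eq:var_X} to carve out $Q_c$, then Proposition~\ref{pr:psi} on each long needle to identify $\int_{X_q} u\,d\fm_q$ with $-(a_\theta - r_{\fm_q}^-(\theta))$ up to a lower-order error, and finally density bounds from the same proposition for $r_{\fm_q}^+(\theta)$. Two small points: first, your Chebyshev gives $\nu(Q\setminus Q_c)\le C'_7\delta^\eta$ with a constant, whereas the statement asks for $\nu(Q_c)\ge 1-\delta^\eta$ exactly; the paper handles this by putting $C'_7$ into the threshold rather than the measure (i.e.\ $Q_c=\{(\int u\,d\fm_q)^2\le C'_7\delta^{2\eta}\}$), which you should do too. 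Second, rather than invoking \eqref{eq:xe^p} with a retuned $\ve'$, the paper simply recomputes $\int_{X_q} u\,d\fm_q$ directly from \eqref{eq:delta>}--\eqref{eq:delta<}, obtaining $|\int_{X_q} u\,d\fm_q + \alpha|\le C(\theta,\ve)\delta^{(1-\ve)^2/4}$; your shortcut via \eqref{eq:xe^p} is legitimate and, with the tuning $\ve'=(1+5\ve)/(9-3\ve)$ you identified, yields the same exponent.
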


\begin{proof}
We set $\delta:=\delta(A)$ and
\[ a:=\frac{2(1-\ve)}{3(3-\ve)} \]
for simplicity, and observe from \eqref{eq:var_X} that
the set $Q_c \subset Q$ consisting of $q$ with
\begin{equation}\label{eq:var_Xq}
\bigg( \int_{X_q} u \,d\fm_q \bigg)^2 \le C_7(\theta,\ve) \delta^a
\end{equation}
satisfies $\nu(Q_c) \ge 1-\delta^{(1-\ve)/(3-\ve)-a}$.
Fix a needle $q \in Q_c \cap Q_{\ell}$ and put $\fm_q=\e^{-\psi} \,dx$,
$r^-:=r_{\fm_q}^-(\theta)$ and $r^+:=r_{\fm_q}^+(\theta)$ for brevity.

Since the assertion is symmetric, by reversing $X_q$ if necessary,
we can assume $\cI_{(X_q,\fm_q)}(\theta)=\e^{-\psi(r^-)}$.
Then we have $\e^{-\psi(r^-)} \le \sP(A_q) \le \cI_{(\R,\bm{\gamma})}(\theta) +\sqrt{\delta}$
and deduce from \eqref{eq:delta>} that
\[ \psi(x) -\bm{\psi}_{\g} \big( (x-r^-) +a_{\theta} \big)
 \ge \big( \psi'_+(r^-) -a_{\theta} \big)(x-r^-) -\omega(\theta) \sqrt{\delta} \]
on $X_q$, where we recall that $X_q$ is parametrized by $u$.
We similarly observe from \eqref{eq:delta<} that
\[ \psi(x) -\bm{\psi}_{\g}\big( (x-r^-) +a_{\theta} \big)
 \le \big( \psi'_+(r^-) -a_{\theta} \big)(x-r^-) +\omega(\theta) \delta^{1/4} \]
on $[S+r^- -a_{\theta},T+r^- -a_{\theta}]$.
Let us set $\alpha:=a_{\theta}-r^-$, $\beta:=\psi'_+(r^-) -a_{\theta}$
and observe $|\beta| \le (C_2+1)\sqrt{\delta}$ from \eqref{eq:psi'/e}.
By \eqref{eq:var_Xq} we also find that $\alpha \to 0$ as $\delta \to 0$,
our goal is to make this quantitative.

We have
\begin{align*}
\int_{X_q} u \,d\fm_q
&= \int_{X_q} x \,\fm_q(dx) \\
&\le \int_0^{\infty} x \exp\Big(
 {-}\bm{\psi}_{\g}(x+\alpha) -\beta(x-r^-) +\omega\sqrt{\delta}
 \Big) \,dx \\
&\quad +\int_{S-\alpha}^0 x \exp\Big(
 {-}\bm{\psi}_{\g}(x+\alpha) -\beta(x-r^-) -\omega\delta^{1/4}
 \Big) \,dx \\
&= \frac{1}{\sqrt{2\pi}} \int_0^{\infty} x \exp\bigg(
 {-}\frac{(x +\alpha +\beta)^2}{2} +\alpha\beta
 +\frac{\beta^2}{2} +\beta r^- +\omega\sqrt{\delta} \bigg) \,dx \\
&\quad +\frac{1}{\sqrt{2\pi}} \int_{S-\alpha}^0 x \exp\bigg(
 {-}\frac{(x +\alpha +\beta)^2}{2} +\alpha\beta
 +\frac{\beta^2}{2} +\beta r^- -\omega\delta^{1/4} \bigg) \,dx \\
&= \exp\bigg( \alpha\beta +\frac{\beta^2}{2} +\beta r^- +\omega\sqrt{\delta} \bigg)
 \int_{\alpha +\beta}^{\infty} (x-\alpha-\beta) \,\bm{\gamma}(dx) \\
&\quad +\exp\bigg( \alpha\beta +\frac{\beta^2}{2} +\beta r^- -\omega\delta^{1/4} \bigg)
 \int_{S+\beta}^{\alpha +\beta} (x-\alpha-\beta) \,\bm{\gamma}(dx).
\end{align*}
Since $|\beta| \le (C_2+1)\sqrt{\delta}$ and $\alpha \to 0$ as $\delta \to 0$, we find
\begin{align*}
\exp\bigg( \alpha\beta +\frac{\beta^2}{2} +\beta r^- +\omega\sqrt{\delta} \bigg)
&\le 1+C(\theta)\sqrt{\delta}, \\
\exp\bigg( \alpha\beta +\frac{\beta^2}{2} +\beta r^- -\omega\delta^{1/4} \bigg)
&\ge 1-C(\theta)\delta^{1/4}.
\end{align*}
Moreover, we observe
\begin{align*}
&\int_{S+\beta}^{\alpha+\beta} (x-\alpha-\beta) \,\bm{\gamma}(dx)
 = \int_{-\infty}^{\alpha+\beta} (x-\alpha-\beta) \,\bm{\gamma}(dx)
 -\int_{-\infty}^{S+\beta} (x -\alpha -\beta) \,\bm{\gamma}(dx) \\
&= \int_{-\infty}^{\alpha+\beta} (x-\alpha-\beta) \,\bm{\gamma}(dx)
 +\frac{1}{\sqrt{2\pi}} \big[ \e^{-x^2/2} \big]_{-\infty}^{S+\beta}
 +(\alpha +\beta) \bm{\gamma}\big( (-\infty,S+\beta] \big)
\end{align*}
and, assuming that $\delta$ is sufficiently small,
\[ \e^{-(S+\beta)^2/2}
 \le \e^{-(1-\ve)S^2/2}
 =(1-S)^{(1-\ve)^2} \e^{-\ve(1-\ve)S^2/2} \bigg( \frac{\e^{-S^2/2}}{1-S} \bigg)^{(1-\ve)^2}
 \le C(\theta,\ve) \delta^{(1-\ve)^2/4} \]
and
\[ \bm{\gamma}\big( (-\infty,S+\beta] \big)
 \le \bm{\gamma}\big( (-\infty,S] \big) +\frac{|\beta|}{\sqrt{2\pi}}
 \le C(\theta)\delta^{1/4} \]
by \eqref{eq:geS} and \eqref{eq:Q_l}.
Therefore we obtain
\begin{align*}
\int_{X_q} u \,d\fm_q
&\le \int_{-\infty}^{\infty} (x-\alpha-\beta) \,\bm{\gamma}(dx)
 +C(\theta,\ve)\delta^{(1-\ve)^2/4} \\
&= -\alpha-\beta +C(\theta,\ve)\delta^{(1-\ve)^2/4} \\
&\le -\alpha +C(\theta,\ve)\delta^{(1-\ve)^2/4}.
\end{align*}
A similar calculation shows
\[ \int_{X_q} u \,d\fm_q \ge -\alpha -C(\theta,\ve)\delta^{(1-\ve)^2/4} \]
as well.
Combining these with \eqref{eq:var_Xq} yields (provided that $a/2 \le (1-\ve)^2/4$)
\begin{equation}\label{eq:|alpha|+}
|\alpha| \le \bigg| \alpha +\int_{X_q} u \,d\fm_q \bigg| +\bigg| \int_{X_q} u \,d\fm_q \bigg|
 \le C(\theta,\ve) \delta^{a/2}.
\end{equation}

In order to bound $|a_{1-\theta}-r^+|$, let us recall
\begin{align*}
\e^{-\psi(x)}
&\le \frac{1}{\sqrt{2\pi}}
 \exp\bigg( \alpha\beta +\frac{\beta^2}{2} +\beta r^- +\omega\sqrt{\delta} \bigg)
 \exp\bigg( {-}\frac{(x +\alpha +\beta)^2}{2} \bigg) \\
&\le \big( 1+C(\theta)\sqrt{\delta} \big) \e^{-\bm{\psi}_{\g}(x +\alpha +\beta)}
\end{align*}
on $X_q$.
Therefore, on one hand, for $\Theta >-(\alpha+\beta)$
with $\e^{-\bm{\psi}_{\g}(a_{1-\theta}+\Theta+\alpha+\beta)} \ge \e^{-\bm{\psi}_{\g}(a_{1-\theta})}/2$,
\begin{align*}
\fm_q \big( [a_{1-\theta}+\Theta, \infty) \big)
&\le \big( 1+C(\theta)\sqrt{\delta} \big)
 \bm{\gamma}\big( [a_{1-\theta} +\Theta +\alpha +\beta,\infty) \big) \\
&\le \big( 1+C(\theta)\sqrt{\delta} \big)
 \bigg( \theta -\frac{\e^{-\bm{\psi}_{\g}(a_{1-\theta})}}{2}(\Theta +\alpha +\beta) \bigg).
\end{align*}
Then choosing
\[ \Theta=2\e^{\bm{\psi}_{\g}(a_{1-\theta})} \theta C(\theta) \sqrt{\delta} -\alpha -\beta \]
implies $\fm_q([a_{1-\theta}+\Theta,\infty)) <\theta$ and hence
\[ r^+ <a_{1-\theta} +\Theta \le a_{1-\theta} +C(\theta,\ve) \delta^{a/2}, \]
where we used \eqref{eq:|alpha|+}.
On the other hand, for $\Xi >\alpha +\beta$
with $\e^{-\bm{\psi}_{\g}(a_{1-\theta}-\Xi+\alpha+\beta)} \ge \e^{-\bm{\psi}_{\g}(a_{1-\theta})}/2$,
we observe
\begin{align*}
\fm_q \big( [a_{1-\theta}-\Xi,\infty) \big)
&\ge 1- \big( 1+C(\theta)\sqrt{\delta} \big)
 \bm{\gamma} \big( (-\infty,a_{1-\theta}-\Xi +\alpha +\beta] \big) \\
&\ge 1- \big( 1+C(\theta)\sqrt{\delta} \big)
 \bigg( (1-\theta) -\frac{\e^{-\bm{\psi}_{\g}(a_{1-\theta})}}{2}(\Xi -\alpha -\beta) \bigg).
\end{align*}
This yields $\fm_q([a_{1-\theta}-\Xi,\infty)) >\theta$
with $\Xi=2\e^{\bm{\psi}_{\g}(a_{1-\theta})} (1-\theta) C(\theta) \sqrt{\delta} +\alpha +\beta$,
and hence
\[ r^+ > a_{1-\theta}-\Xi \ge a_{1-\theta} -C(\theta,\ve) \delta^{a/2}. \]
This completes the proof.
$\qedd$
\end{proof}

Let us explain the geometric intuition of the proof of the next proposition.
If both $\nu(Q_{\ell}^-)$ and $\nu(Q_{\ell}^+)$ have a certain volume,
then the strict concavity of $\cI_{(\R,\bm{\gamma})}$ implies that
the sum of the perimeters of regions $A^-$ and $A^+$
corresponding to $Q_{\ell}^-$ and $Q_{\ell}^+$, respectively,
is larger than $\cI_{(\R,\bm{\gamma})}(\theta)$.
This contradicts the assumed small deficit when the gap
between $\sP(A)$ and $\sP(A^-)+\sP(A^+)$ is sufficiently small.
In order to construct such a decomposition of $A$ ($A_{\hat{r}}^-$ and $A_{\hat{r}}^+$ in the proof),
we need an additional assumption $\theta \neq 1/2$.

\begin{proposition}[One of $Q_{\ell}^-$ and $Q_{\ell}^+$ is small]\label{pr:Pr6.4}
Assume $\theta \neq 1/2$.
Then we have
\[ \min\{ \nu(Q_{\ell}^-), \nu(Q_{\ell}^+) \} \le C_9(\theta)\delta(A)^{(1-\ve)/(9-3\ve)}, \]
provided that $\delta(A)$ is sufficiently small.
\end{proposition}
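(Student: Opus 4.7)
Argue by contradiction: assume that both $\nu(Q_\ell^-)$ and $\nu(Q_\ell^+)$ exceed $C_9(\theta)\delta(A)^{a}$ for a large constant $C_9$, where $a:=(1-\ve)/(9-3\ve)$, and derive a contradiction with $\sP(A)-\cI_{(\R,\bm{\gamma})}(\theta)=\delta(A)$ by producing a strictly larger lower bound on the deficit. Without loss of generality assume $\theta>1/2$, so that $a_\theta>0>a_{1-\theta}=-a_\theta$; the resulting strict separation of these two values is precisely where the hypothesis $\theta\neq 1/2$ enters and is what breaks at $\theta=1/2$.

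First, refine both $Q_\ell^\pm$ to $\hat Q_\ell^\pm:=Q_\ell^\pm\cap Q_c$ by intersecting with the set $Q_c$ furnished by Proposition~\ref{pr:var_X}. Since $\nu(Q\setminus Q_c)\le\delta(A)^a$ and by assumption $\nu(Q_\ell^\pm)\ge C_9\delta(A)^a$, choosing $C_9$ large enforces $\nu(\hat Q_\ell^\pm)\ge (C_9/2)\delta(A)^a$; moreover, on the refined sets the endpoints are quantitatively close to $\pm a_\theta$, namely $|r_{\fm_q}^-(\theta)-a_\theta|+|r_{\fm_q}^+(\theta)+a_\theta|\le C(\theta,\ve)\delta(A)^a$.

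Second, pick any level $\hat r$ in the non-degenerate interval $(-a_\theta,a_\theta)$, say $\hat r=0$, and split $A = A_{\hat r}^-\sqcup A_{\hat r}^+$ with $A_{\hat r}^-:=A\cap\{u\le\hat r\}$ and $A_{\hat r}^+:=A\cap\{u>\hat r\}$. Using the needle disintegration together with Propositions~\ref{pr:psi} and~\ref{pr:var_X}, the slice $A_q\cap\{u\le 0\}$ has $\fm_q$-mass close to $\bm{\gamma}((-\infty,0])=1/2$ on $\hat Q_\ell^-$, and close to $\bm{\gamma}([-a_\theta,0])=\theta-1/2$ on $\hat Q_\ell^+$; both quantities are strictly positive thanks to $\theta>1/2$. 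Integrating over $Q$ gives $\min\{\theta_-,\theta_+\}\ge c(\theta)\min\{\nu(\hat Q_\ell^-),\nu(\hat Q_\ell^+)\}\ge c'(\theta)\delta(A)^a$ for the masses $\theta_\pm:=\fm(A_{\hat r}^\pm)$. Then the isoperimetric inequality (Theorem~\ref{th:isop}) yields $\sP(A_{\hat r}^\pm)\ge\cI_{(\R,\bm{\gamma})}(\theta_\pm)$, and strict concavity of $\cI_{(\R,\bm{\gamma})}$ with $\cI_{(\R,\bm{\gamma})}(0)=0$ (Lemma~\ref{lm:conc}) gives the quantitative superadditivity $\cI_{(\R,\bm{\gamma})}(\theta_-)+\cI_{(\R,\bm{\gamma})}(\theta_+)-\cI_{(\R,\bm{\gamma})}(\theta)\ge c''(\theta)\min\{\theta_-,\theta_+\}\ge c'''(\theta)\delta(A)^a$.

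The main obstacle is the last step, namely to relate $\sP(A_{\hat r}^-)+\sP(A_{\hat r}^+)$ back to $\sP(A)$ without losing the concavity gain. The standard decomposition identity reads $\sP(A_{\hat r}^-)+\sP(A_{\hat r}^+)=\sP(A)+2\fm^+(A\cap\{u=\hat r\})$, so one would need the cut $\fm^+(A\cap\{u=\hat r\})$ to be $o(\delta(A)^a)$. However, since $|\nabla u|\equiv 1$ $\fm$-almost everywhere, the coarea formula only provides $\int_{\R}\fm^+(A\cap\{u=r\})\,dr=\fm(A)=\theta$, so a generic $\hat r\in(-a_\theta,a_\theta)$ gives a cut that is bounded but not $\delta(A)$-small. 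To overcome this I would either choose $\hat r$ adaptively to hit a sparse level, exploiting the reverse Poincar\'e inequality (Theorem~\ref{th:revP}) to concentrate the cut mass in a small interval, or replace the level-set cut in $M$ by a partition $Q=Q^-\sqcup Q^+$ at the level of the needle space and set $A^\pm:=A\cap\bigcup_{q\in Q^\pm}X_q$, so that the two pieces share no perimeter $\fm$-almost everywhere while the masses $\theta_\pm=\theta\nu(Q^\pm)$ remain controlled. This delicate step mirrors the compact-diameter argument of~\cite[Proposition~6.4]{CMM}, and here the additional difficulty from possibly unbounded needles is precisely what Proposition~\ref{pr:var_X} is designed to overcome.
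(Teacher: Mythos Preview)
Your overall strategy---split $A$ by a level set of $u$, apply the isoperimetric inequality to each piece, and use the strict concavity of $\cI_{(\R,\bm{\gamma})}$ to extract a gain proportional to $\min\{\nu(Q_\ell^-),\nu(Q_\ell^+)\}$---is exactly what the paper does. You have also correctly isolated the crux: controlling the cut term $|A\cap u^{-1}(\hat r)|$ so that $\sP(A_{\hat r}^-)+\sP(A_{\hat r}^+)\le \sP(A)+O(\delta^a)$. However, your assessment of this obstacle is too pessimistic, and your proposed resolutions miss the actual mechanism.

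The key point you overlook is that the coarea integral restricted to a \emph{middle} interval $(r_1,r_2)\subset(a_\theta,a_{1-\theta})$ (the paper takes the middle third) equals $\fm(A\cap\{r_1<u<r_2\})$, and this quantity is not of order $\theta$ but of order $\delta^a$. Indeed, for $q\in Q_c\cap Q_\ell^-$ the definition of $Q_\ell^-$ together with $r^-_{\fm_q}(\theta)\approx a_\theta<r_1$ (Proposition~\ref{pr:var_X}) forces $\fm_q(A_q\cap(-\infty,r_1])\ge\theta-\sqrt{\delta}$, and symmetrically for $Q_\ell^+$; since $Q_c\cap(Q_\ell^-\cup Q_\ell^+)$ covers almost all of $Q$, one obtains $\fm(A_{r_1}^-\cup A_{r_2}^+)\ge\theta-\delta^a$, hence $\int_{r_1}^{r_2}|A\cap u^{-1}(r)|\,dr\le\delta^a$. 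Averaging then produces $\hat r$ with $|A\cap u^{-1}(\hat r)|\le C(\theta)\delta^a$. So the ``adaptive choice'' you mention works, but the reason is not a direct appeal to the reverse Poincar\'e inequality---it is the very definition of $Q_\ell^\pm$ (small symmetric difference with a half-line) combined with the centering from Proposition~\ref{pr:var_X}.

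Your alternative idea of partitioning $Q=Q^-\sqcup Q^+$ and setting $A^\pm:=A\cap\bigcup_{q\in Q^\pm}X_q$ does not work: there is no inequality of the form $\sP(A^-)+\sP(A^+)\le\sP(A)+\text{small}$ for such a partition, since the sets $\bigcup_{q\in Q^\pm}X_q$ need not have any boundary regularity in $M$. Lemma~\ref{lm:Lem4.1} only gives $\sP(A)\ge\int_Q\sP(A_q)\,d\nu$, which yields no concavity gain when split this way.
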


\begin{proof}
Put $\delta=\delta(A)$ again in this proof.
Let us first assume $\theta \in (0,1/2)$ and consider the decomposition of $A$,
\[ A_r^- :=A \cap \{ u \le r \}, \qquad A_r^+ :=A \cap \{ u \ge r \}, \]
for $r \in (r_1,r_2)$ with
\[ r_1:= \frac{2}{3}a_{\theta} +\frac{1}{3}a_{1-\theta}, \qquad
 r_2:= \frac{1}{3}a_{\theta} +\frac{2}{3}a_{1-\theta}. \]
Note that $a_{\theta}<0<a_{1-\theta}=-a_{\theta}$ since $\theta<1/2$.
Moreover, letting $\delta$ smaller if necessary, we find from \eqref{eq:|alpha|} that
$r_1 \ge r^-_{\fm_q}(\theta)$ holds for $q \in Q_c \cap Q_{\ell}$.

Since $|\nabla u|=1$ almost everywhere, we obtain from the coarea formula
(see, e.g., \cite{Ch}) that
\[ \fm \big( A \cap \{r_1 <u< r_2\} \big)
 = \int_{A \,\cap\, \{r_1 <u< r_2 \}} |\nabla u| \,d\fm
 =\int_{r_1}^{r_2} |A \cap u^{-1}(r)| \,dr, \]
where $|\cdot|$ denotes the $(n-1)$-dimensional measure induced from $\fm$
(precisely, $\e^{-\Psi}\mathcal{H}^{n-1}$ where $\mathcal{H}^{n-1}$
is the $(n-1)$-dimensional Hausdorff measure).
For $q \in Q_c \cap Q_{\ell}^-$, we deduce from $r_1 \ge r^-_{\fm_q}(\theta)$ and \eqref{eq:Q^+-} that
\begin{equation}\label{eq:r^-}
\fm_q\big( A_q \cap (-\infty,r_1] \big)
 =\fm_q(A_q) -\fm_q\big( A_q \setminus (-\infty,r_1] \big)
 \ge \theta-\sqrt{\delta}.
\end{equation}
Similarly $\fm_q(A_q \cap [r_2,\infty)) \ge \theta-\sqrt{\delta}$ holds for $q \in Q_c \cap Q_{\ell}^+$.
Then it follows from Theorem~\ref{th:Ndl}\eqref{Ndl1}, Lemmas~\ref{lm:Q_l}, \ref{lm:Lem6.2}
and Proposition~\ref{pr:var_X} that
\begin{align*}
\fm(A^-_{r_1} \cup A^+_{r_2})
&\ge \int_{Q_c \cap Q_{\ell}^-} \fm_q\big( A_q \cap (-\infty,r_1] \big) \,\nu(dq)
 +\int_{Q_c \cap Q_{\ell}^+} \fm_q\big( A_q \cap [r_2,\infty) \big) \,\nu(dq) \\
&\ge (\theta -\sqrt{\delta}) \nu\big( Q_c \cap (Q_{\ell}^- \cup Q_{\ell}^+) \big) \\
&\ge (\theta -\sqrt{\delta}) (1-2\sqrt{\delta}-\delta^{(1-\ve)/(9-3\ve)}) \\
&\ge \theta -(1+2\theta) \sqrt{\delta} -\theta \delta^{(1-\ve)/(9-3\ve)} \\
&\ge \theta -\delta^{(1-\ve)/(9-3\ve)}.
\end{align*}
Therefore we obtain
\[ \int_{r_1}^{r_2} |A \cap u^{-1}(r)| \,dr
 =\theta -\fm(A^-_{r_1} \cup A^+_{r_2}) \le \delta^{(1-\ve)/(9-3\ve)}, \]
and we can choose some $\hat{r} \in (r_1,r_2)$ satisfying
\[ |A \cap u^{-1}(\hat{r})| \le \frac{\delta^{(1-\ve)/(9-3\ve)}}{r_2 -r_1}
 =\frac{3\delta^{(1-\ve)/(9-3\ve)}}{a_{1-\theta} -a_{\theta}}
 =\frac{3\delta^{(1-\ve)/(9-3\ve)}}{2|a_{\theta}|}. \]
This yields that
\begin{equation}\label{eq:r}
\sP(A^-_{\hat{r}}) +\sP(A^+_{\hat{r}}) -\sP(A)
 \le 2|A \cap u^{-1}(\hat{r})| \le \frac{3\delta^{(1-\ve)/(9-3\ve)}}{|a_{\theta}|}.
\end{equation}
In the first inequality, take a sequence $\{\phi_i\}_{i \in \N}$ of Lipschitz functions such that
$0 \le \phi_i \le \chi_A$, $\phi_i \to \chi_A$ in $L^1(\fm)$ and
$\lim_{i \to \infty} \int_M |\nabla \phi_i| \,d\fm=\sP(A)$ (recall \eqref{eq:peri} for the definition of $\sP(A)$),
and put
\[ \rho_i^+(x):=\min\big\{ i \cdot \max\{ u(x)-\hat{r},0 \},1 \big\}, \qquad \rho_i^-(x):=1-\rho_i^+(x). \]
Then $\rho_i^{\pm} \phi_i \to \chi_{A_{\hat{r}}^{\pm}}$ in $L^1(\fm)$ and
\begin{align*}
\sP(A^-_{\hat{r}}) +\sP(A^+_{\hat{r}})
&\le \liminf_{i \to \infty} \int_M \big( |\nabla(\rho_i^- \phi_i)| +|\nabla(\rho_i^+ \phi_i)| \big) \,d\fm \\
&\le \lim_{i \to \infty} \int_M (\rho_i^- +\rho_i^+)|\nabla \phi_i| \,d\fm
 +\liminf_{i \to \infty} \int_M \big( |\nabla \rho_i^-|+|\nabla \rho_i^+| \big) \phi_i \,d\fm \\
&\le \sP(A) +\lim_{i \to \infty} \int_{A \cap \{ \hat{r}<u<\hat{r}+i^{-1} \}} 2i \,d\fm \\
&= \sP(A) +2|A \cap u^{-1}(\hat{r})|.
\end{align*}

Now, it follows from Lemma~\ref{lm:conc} that
$\cI''_{(\R,\bm{\gamma})} \le -\cI_{(\R,\bm{\gamma})}(\theta)^{-1}$ on $(0,\theta]$ (since $\theta<1/2$),
which implies
\[ \sP(A^-_{\hat{r}})
 \ge \cI_{(\R,\bm{\gamma})}\big( \fm(A^-_{\hat{r}}) \big)
 \ge \frac{\fm(A^-_{\hat{r}})}{\theta} \cI_{(\R,\bm{\gamma})}(\theta)
 +\frac{1}{2\cI_{(\R,\bm{\gamma})}(\theta)} \bigg( 1-\frac{\fm(A^-_{\hat{r}})}{\theta} \bigg)
 \frac{\fm(A^-_{\hat{r}})}{\theta} \theta^2. \]
Concerning the second term in the RHS, on one hand, we observe from \eqref{eq:r^-} that
\[ \fm(A^-_{\hat{r}}) \ge (\theta -\sqrt{\delta}) \nu(Q_c \cap Q_{\ell}^-)
 \ge \frac{\theta}{2} \nu(Q_c \cap Q_{\ell}^-). \]
On the other hand, we similarly find
\[ \fm(A^-_{\hat{r}}) =\theta -\fm(A^+_{\hat{r}})
 \le \theta -\frac{\theta}{2} \nu(Q_c \cap Q_{\ell}^+). \]
Therefore, setting $V:=\min\{ \nu(Q_c \cap Q_{\ell}^-),\nu(Q_c \cap Q_{\ell}^+) \} \le 1/2$,
we obtain
\[ \sP(A^-_{\hat{r}})
 \ge \frac{\fm(A^-_{\hat{r}})}{\theta} \cI_{(\R,\bm{\gamma})}(\theta)
 +\frac{1}{2\cI_{(\R,\bm{\gamma})}(\theta)} \bigg( 1-\frac{V}{2} \bigg) \frac{V}{2} \theta^2. \]
We have a similar inequality for $A^+_{\hat{r}}$ in the same way.
Summing up, we obtain
\[ \sP(A^-_{\hat{r}}) +\sP(A^+_{\hat{r}})
 \ge \cI_{(\R,\bm{\gamma})}(\theta)
 +\frac{1}{\cI_{(\R,\bm{\gamma})}(\theta)} \bigg( 1-\frac{V}{2} \bigg) \frac{V}{2} \theta^2
 \ge \cI_{(\R,\bm{\gamma})}(\theta) +c(\theta) V. \]
Combining this with \eqref{eq:r} and $\cI_{(\R,\bm{\gamma})}(\theta) =\sP(A) -\delta$ yields
\[ \frac{3\delta^{(1-\ve)/(9-3\ve)}}{|a_{\theta}|}
 \ge \sP(A^-_{\hat{r}}) +\sP(A^+_{\hat{r}}) -\sP(A)
 \ge c(\theta)V -\delta \]
and hence, by Proposition~\ref{pr:var_X},
\begin{align*}
\min\{ \nu(Q_{\ell}^-),\nu(Q_{\ell}^+) \}
&\le V +\delta^{(1-\ve)/(9-3\ve)} \\
&\le \frac{1}{c(\theta)} \bigg( \frac{3\delta^{(1-\ve)/(9-3\ve)}}{|a_{\theta}|} +\delta \bigg)
 +\delta^{(1-\ve)/(9-3\ve)} \\
&\le C(\theta) \delta^{(1-\ve)/(9-3\ve)}.
\end{align*}
This completes the proof for $\theta<1/2$.

When $\theta>1/2$, the complement $A^c$ of $A$ satisfies $\sP(A^c)=\sP(A)$
and $\fm(A^c)=1-\theta<1/2$.
Note also that $\cI_{(\R,\bm{\gamma})}(\theta) =\cI_{(\R,\bm{\gamma})}(1-\theta)$ and
$r^{-}_{\fm_{q}}(\theta) = r^{+}_{\fm_{q}}(1-\theta)$, $r^{+}_{\fm_{q}}(\theta) = r^{-}_{\fm_{q}}(1-\theta)$. 
Hence we have, since $E \setminus F =E \cap F^c =F^c \setminus E^c$,
\[ A_q \,\triangle\, (-\infty,r_{\fm_q}^-(\theta)] = A_q^c \,\triangle\, (r_{\fm_q}^-(\theta),\infty)
 = A_q^c \,\triangle\, (r_{\fm_q}^+(1-\theta),\infty) \]
and similarly $A_q \,\triangle\, [r_{\fm_q}^+(\theta),\infty) = A_q^c \,\triangle\, (-\infty,r_{\fm_q}^-(1-\theta))$.
Therefore we can obtain the claim for $A$ by applying the above argument to $A^c$.
$\qedd$
\end{proof}

From the proof of Proposition~\ref{pr:Pr6.4},
we find that $C_9(1-\theta)=C_9(\theta)$ and
$\lim_{\theta \to 1/2} C_9(\theta)=\infty$ (since $a_{1/2}=0$).
Hence the case of $\theta=1/2$ is not covered.

We finally prove our main theorem.
We employ the sub-level and super-level sets of the guiding function $u$ instead of balls in \cite{CMM}.

\begin{theorem}[Quantitative isoperimetry]\label{th:main}
Let $(M,g,\fm)$ be a complete weighted Riemannian manifold
such that $\Ric_{\infty} \ge 1$ and $\fm(M)=1$.
Fix $\theta \in (0,1) \setminus \{1/2\}$ and $\ve \in (0,1)$,
take a Borel set $A \subset M$ with $\fm(A)=\theta$,
and assume that $\sP(A) \le \cI_{(\R,\bm{\gamma})}(\theta)+\delta$ holds
for sufficiently small $\delta>0$ $($relative to $\theta$ and $\ve)$.
Then, for the guiding function $u$ associated with $A$ such that $\int_M u \,d\fm=0$, we have
\begin{equation}\label{eq:Qisop}
{\min}\Big\{ \fm\big( A \,\triangle\, \{ u \le a_{\theta} \} \big),
 \fm\big( A \,\triangle\, \{ u \ge a_{1-\theta} \} \big) \Big\}
 \le C(\theta,\ve) \delta^{(1-\ve)/(9-3\ve)}.
\end{equation}
\end{theorem}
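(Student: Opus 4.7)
My plan is to run the needle decomposition of Subsection~\ref{ssc:peri} with $f = \chi_A - \theta$, producing the guiding function $u$ (normalized so $\int_M u\,d\fm = 0$), the partition $\{X_q\}_{q \in Q}$, the measure $\nu$, and the conditional measures $\{\fm_q\}_{q \in Q}$; then to combine all the ingredients from Sections~\ref{sc:psi}--\ref{sc:rev}. Write $\delta := \delta(A)$. Since each needle $X_q$ is parametrized by $u$ itself, $\{u \le a_\theta\} \cap X_q$ coincides with the one-dimensional interval $(-\infty, a_\theta] \cap X_q$; this identifies the global half-space $\{u \le a_\theta\}$ with the one-dimensional quantiles $r_{\fm_q}^\pm(\theta)$ on each needle and makes the estimates of Sections~\ref{sc:symm} and~\ref{sc:main} directly applicable.

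By Proposition~\ref{pr:Pr6.4}, after replacing $\{u \le a_\theta\}$ by $\{u \ge a_{1-\theta}\}$ in~\eqref{eq:Qisop} if necessary, we may assume $\nu(Q_\ell^+) \le C_9(\theta)\delta^{(1-\ve)/(9-3\ve)}$. Setting $Q_\star := Q_c \cap Q_\ell^-$ and combining Lemmas~\ref{lm:Q_l} and~\ref{lm:Lem6.2}, Proposition~\ref{pr:var_X}, and this last bound (using $\sqrt{\delta} \le \delta^{(1-\ve)/(9-3\ve)}$ for small $\delta$), we obtain
\[ \nu(Q \setminus Q_\star) \le C(\theta,\ve)\delta^{(1-\ve)/(9-3\ve)}. \]
For each $q \in Q_\star$, the triangle inequality splits
\[
\fm_q\big(A_q \triangle (\{u \le a_\theta\} \cap X_q)\big)
\le \fm_q\big(A_q \triangle (-\infty, r_{\fm_q}^-(\theta)]\big)
  + \fm_q\big((-\infty, r_{\fm_q}^-(\theta)] \triangle (-\infty, a_\theta]\big);
\]
the first summand is $\le \sqrt{\delta}$ by the definition~\eqref{eq:Q^+-} of $Q_\ell^-$, while the second is the $\fm_q$-mass of an interval of length $|r_{\fm_q}^-(\theta) - a_\theta| \le C_8(\theta,\ve)\delta^{(1-\ve)/(9-3\ve)}$ supplied by Proposition~\ref{pr:var_X}. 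After translating $X_q$ to match the normalization~\eqref{eq:a_theta}, the upper estimate~\eqref{eq:delta<} of Proposition~\ref{pr:psi} (applied with deficit $\sqrt{\delta}$, since $q \in Q_\ell$) bounds $\e^{-\psi}$ by a $\theta$-dependent constant on a neighborhood of $a_\theta$ that contains this interval for $\delta$ small. Hence $\fm_q(A_q \triangle (\{u \le a_\theta\} \cap X_q)) \le C(\theta,\ve)\delta^{(1-\ve)/(9-3\ve)}$ for every $q \in Q_\star$.

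The conclusion then follows by disintegration (Theorem~\ref{th:Ndl}): the integral over $Q_\star$ is bounded by $C(\theta,\ve)\delta^{(1-\ve)/(9-3\ve)}$ from the per-needle estimate just obtained, and the integral over $Q \setminus Q_\star$ by $2\,\nu(Q \setminus Q_\star) \le C(\theta,\ve)\delta^{(1-\ve)/(9-3\ve)}$ from the trivial bound $\fm_q(X_q) = 1$; summing gives~\eqref{eq:Qisop}. The main obstacle is justifying the density bound invoked above: Proposition~\ref{pr:psi} requires the isoperimetric profile on $X_q$ to be attained, up to the deficit $\sqrt{\delta}$, at the low-$u$ endpoint $r_{\fm_q}^-(\theta)$. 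For $q \in Q_\star$ this is forced by the needle lying in $Q_\ell^-$, so that $A_q \approx (-\infty, r_{\fm_q}^-(\theta)]$ and hence $\sP(A_q) \approx \e^{-\psi(r_{\fm_q}^-(\theta))}$, together with the small needle deficit $\sP(A_q) - \cI_{(\R,\bm{\gamma})}(\theta) \le \sqrt{\delta}$ on $Q_\ell$; nevertheless, the ``WLOG reversal'' conventions hidden inside Propositions~\ref{pr:psi} and~\ref{pr:var_X} must be carefully reconciled with the fixed orientation of $X_q$ inherited from the needle decomposition via $u$.
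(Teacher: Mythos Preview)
Your argument is essentially identical to the paper's: both assume $\nu(Q_\ell^+)$ is small via Proposition~\ref{pr:Pr6.4}, restrict to $Q_c \cap Q_\ell^-$ (the paper integrates over $Q_\ell^-$ and then removes $Q_\ell^- \setminus Q_c$, which amounts to the same thing), split each needle contribution by the triangle inequality into the $\sqrt{\delta}$ term from the definition of $Q_\ell^-$ and the interval bound from Proposition~\ref{pr:var_X}, and absorb the complement using $\nu(Q \setminus Q_\star)$.

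One small slip worth fixing: to bound the density $\e^{-\psi}$ from \emph{above} on the short interval between $a_\theta$ and $r_{\fm_q}^-(\theta)$ you want the \emph{lower} bound \eqref{eq:delta>} on $\psi$, not \eqref{eq:delta<}. Combined with \eqref{eq:psi'/e} (and the identity \eqref{eq:delta+}), \eqref{eq:delta>} gives $\e^{-\psi(x)} \le (1+C(\theta)\sqrt{\delta})\,\e^{-\bm{\psi}_{\g}(x+\psi'_+(a_\theta)-a_\theta)} \le 1$ uniformly on all of $X_q$, which is exactly what the paper uses implicitly. This also dissolves your orientation concern: \eqref{eq:delta>} holds on the whole needle regardless of whether the isoperimetric minimum sits at $r^-$ or $r^+$, so after the reversal hidden in the proof of Proposition~\ref{pr:var_X} the global density bound $\e^{-\psi} \le 1$ persists and no separate argument tying $\sP(A_q)$ to $\e^{-\psi(r^-)}$ is needed.
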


\begin{proof}
We set again $\delta=\delta(A)$.
Thanks to Proposition~\ref{pr:Pr6.4}, we first assume
$\nu(Q_{\ell}^+) \le C_9(\theta) \delta^{(1-\ve)/(9-3\ve)}$.
Then we deduce from Lemmas~\ref{lm:Q_l} and \ref{lm:Lem6.2} that
\[ \nu(Q \setminus Q_{\ell}^-)
 =\nu(Q \setminus Q_{\ell}) +\nu\big(Q_{\ell} \setminus (Q_{\ell}^- \cup Q_{\ell}^+) \big) +\nu(Q_{\ell}^+)
 \le 2\sqrt{\delta} +C_9(\theta) \delta^{(1-\ve)/(9-3\ve)}. \]
Therefore we obtain
\begin{align}
&\fm\big( A \,\triangle\, \{ u \le a_{\theta} \} \big) \nonumber\\
&\le \int_{Q_{\ell}^-} \fm_q \big( A_q \,\triangle\, (-\infty,a_{\theta}] \big) \,\nu(dq)
 +\nu(Q \setminus Q_{\ell}^-) \nonumber\\
&\le \int_{Q_{\ell}^-} \fm_q \big( A_q \,\triangle\, (-\infty,r_{\fm_q}^-(\theta)] \big) \,\nu(dq)
 +\int_{Q_{\ell}^-} \fm_q \big( (-\infty,a_{\theta}] \,\triangle\, (-\infty,r_{\fm_q}^-(\theta)] \big) \,\nu(dq)
 \nonumber\\
&\quad +\nu(Q \setminus Q_{\ell}^-) \nonumber\\
&\le \int_{Q_{\ell}^-} \fm_q \big( (-\infty,a_{\theta}] \,\triangle\, (-\infty,r_{\fm_q}^-(\theta)] \big) \,\nu(dq)
 +3\sqrt{\delta} +C_9(\theta) \delta^{(1-\ve)/(9-3\ve)}. \label{eq:main1}
\end{align}
In order to estimate the first term, we recall from Proposition~\ref{pr:var_X} that
$|a_{\theta}-r_{\fm_q}^-(\theta)| \le C_8(\theta,\ve)\delta^{(1-\ve)/(9-3\ve)}$ for $q \in Q_c \cap Q_{\ell}$.
This implies
\begin{align*}
\fm_q \big( (-\infty,a_{\theta}] \,\triangle\, (-\infty,r_{\fm_q}^-(\theta)] \big)
&= \fm_q \Big( \big( \min\{ a_{\theta},r_{\fm_q}^-(\theta) \},\max\{ a_{\theta},r_{\fm_q}^-(\theta) \} \big] \Big) \\
&\le C(\theta,\ve)\delta^{(1-\ve)/(9-3\ve)}
\end{align*}
for $q \in Q_c \cap Q_{\ell}$.
Substituting this into \eqref{eq:main1}, we obtain
\begin{align*}
&\fm\big( A \,\triangle\, \{ u \le a_{\theta} \} \big) \\
&\le C(\theta,\ve)\delta^{(1-\ve)/(9-3\ve)}
 +\nu(Q_{\ell}^- \setminus Q_c) +3\sqrt{\delta} +C_9(\theta) \delta^{(1-\ve)/(9-3\ve)} \\
&\le C(\theta,\ve) \delta^{(1-\ve)/(9-3\ve)}.
\end{align*}

In the case of $\nu(Q_{\ell}^-) \le C_9(\theta) \delta^{(1-\ve)/(9-3\ve)}$,
we similarly have $\fm(A \,\triangle\, \{ u \ge a_{1-\theta} \}) \le C(\theta,\ve) \delta^{(1-\ve)/(9-3\ve)}$.
This completes the proof.
$\qedd$
\end{proof}

We conclude with several remarks and open problems related to Theorem~\ref{th:main}.

\begin{remark}\label{rm:main}
\begin{enumerate}[(a)]
\item\label{main-u}
If we assert only the existence of `some' $1$-Lipschitz function $u$ enjoying \eqref{eq:Qisop},
then one can merely take $u(x):=d(A,x)+a_{\theta}$.
Therefore the novelty of Theorem~\ref{th:main} lies in the construction of $u$
as the guiding function of the needle decomposition.
By construction the guiding function $u$ seems closely related to the Busemann function.
When there is a \emph{straight line} $\eta:\R \lra M$
(meaning that $d(\eta(s),\eta(t))=|s-t|$ for all $s,t \in \R$),
the associated \emph{Busemann function} $\mathbf{b}:M \lra \R$ is defined by
\[ \mathbf{b}(x) :=\lim_{t \to \infty} \big\{ t-d\big( x,\eta(t) \big) \big\}. \]
By construction $\mathbf{b}$ is $1$-Lipschitz
and sometimes regarded as `a distance function from infinity'.
In Cheeger--Gromoll-type splitting theorems
(under $\Ric_N \ge 0$, see also \eqref{main-split} below),
we show that $\mathbf{b}$ is totally geodesic and $M$ is split into $\R \times \Sigma$,
where $\{t\} \times \Sigma =\mathbf{b}^{-1}(t)$ and $\eta_x(t):=(t,x)$
is a straight line for every $x \in \Sigma$.
This is a similar phenomenon to the rigidity of the Bakry--Ledoux isoperimetric inequality
(under $\Ric_{\infty} \ge K>0$) in Theorem~\ref{th:Morgan}, where the guiding function
plays a similar role to the Busemann function (see \cite{Ma2} for details).
Going back to our quantitative investigation,
the guiding function $u$ shares several properties with the Busemann function:
$u$ is $1$-Lipschitz, most needles are long in both directions
($\lim_{\delta \to 0}S=-\infty$ and $\lim_{\delta \to 0}T=\infty$ in Proposition~\ref{pr:psi}),
and the direction of most needles are the same (Proposition~\ref{pr:Pr6.4}).
When, for instance, some needle is a straight line, one may relate the associated Busemann function
with the guiding function and obtain \eqref{eq:Qisop} in terms of that Busemann function.
In this direction, moreover,
one could expect an `almost splitting theorem' as metric measure spaces,
namely $(M,g,\fm)$ is close to the product space $(\R,|\cdot|,\bm{\gamma}) \times Y$ in some sense
(even when there is no infinite needle).
This is an interesting and challenging problem, let us recall that
Gromov's precompactness theorem (\cite[\S 5.A]{Gr}) does not apply under $\Ric_{\infty} \ge K>0$.

\item\label{main-split}
In comparison with the Cheeger--Gromoll-type splitting theorem under $\Ric_{\infty} \ge 0$
in \cite{Li,FLZ}, we remark that the upper boundedness of the weight function $\Psi$
was not assumed in Theorem~\ref{th:main}.
In the splitting theorem we claim that the space splits off the real line
endowed with the Lebesgue measure,
and hence an upper bound of $\Psi$ is necessary to rule out Gaussian spaces
(and hyperbolic spaces with very convex weight functions).
Compare this with the rigidity results under $\Ric_{\infty} \ge K>0$
in Theorems~\ref{th:CZ}, \ref{th:Morgan}.

\item\label{main-F}
Since the needle decomposition is available also for Finsler manifolds by \cite{CM1,Oneedle},
one can prove the analogue of Theorem~\ref{th:main} for reversible Finsler manifolds verbatim.
In the non-reversible case, however, the needle decomposition does not provide
the sharp isoperimetric inequality and it is unclear if one can generalize Theorem~\ref{th:main}.
See \cite{Oneedle} for more details on the non-reversible situation,
and \cite{Oisop} for a derivation of the sharp Bakry--Ledoux isoperimetric inequality
for non-reversible Finsler manifolds.

\item\label{main-RCD}
In Theorem~\ref{th:main} we restrict ourselves to weighted Riemannian manifolds
since the needle decomposition is not yet known for metric measure spaces satisfying
$\CD(1,\infty)$ or $\RCD(1,\infty)$.
We refer to \cite{AM} for the Bakry--Ledoux isoperimetric inequality on $\RCD(1,\infty)$-spaces.

\item\label{main-1/2}
There are two open problems related to Theorem~\ref{th:main}.
The first one is the case of $\theta=1/2$.
The condition $\theta \neq 1/2$ was used only in Proposition~\ref{pr:Pr6.4},
where we showed that one of $Q_{\ell}^-$ and $Q_{\ell}^+$ has a small volume.
If this step is established in some other way,
then all the other steps of the proof work and we can obtain Theorem~\ref{th:main} for $\theta=1/2$.

\item\label{main-sqrt}
Another open problem is the optimal order of $\delta$ in \eqref{eq:Qisop}.
Our estimate $\delta^{(1-\ve)/(9-3\ve)}$ seems not optimal at all and,
compared with the case of Gaussian spaces (recall \eqref{eq:Gauss}),
the optimal order is likely $\sqrt{\delta}$.
We remark that the optimal order is not known also for $\CD(N-1,N)$-spaces studied in \cite{CMM}
($N \in (1,\infty)$), where they obtained $\delta^{N/(N^2+2N-1)}$ depending on $N$
(recall \eqref{eq:CMM}).

\item\label{main-Wass}
Inspired by \cite{DF,CF}, we expect that the push-forward measure $u_* \fm$
is close to $\bm{\gamma}$ in the Wasserstein distance $W_1$ or $W_2$ over $\R$.
We may make use of the Talagrand inequality
$W_2(u_* \fm,\bm{\gamma})^2 \le 2\Ent_{\bm{\gamma}}(u_* \fm)$
(recall Subsection~\ref{ssc:rLSI}).
\end{enumerate}
\end{remark}
\medskip

{\it Acknowledgements}.
We thank Fabio Cavalletti and Max Fathi for discussions during the workshop
``Geometry and Probability'' in Osaka (2019).
SO was supported in part by JSPS Grant-in-Aid for Scientific Research (KAKENHI) 19H01786.

{\small

}

\end{document}